\documentclass[12pt]{elsarticle}
\makeatletter
\makeatother
\usepackage[all,cmtip]{xy}
\usepackage{hyperref}
\hypersetup{colorlinks = true, allcolors = blue}
\usepackage{fullpage}

\usepackage{mathtools}
\usepackage{extarrows}

\usepackage[utf8]{inputenc}
\usepackage{setspace}
\usepackage{tocloft}

\biboptions{square,sort,comma,numbers}
\usepackage{graphicx, titlesec}
\usepackage{amscd,amsmath,amsthm,amssymb,bm}

\usepackage{verbatim}
\usepackage[all]{xy}
\usepackage{color}
\usepackage{tikz, float} \usetikzlibrary {positioning}
\usetikzlibrary{calc}

\usepackage{makeidx}         
\usepackage{graphicx}        
\usepackage{multicol}        
\usepackage[bottom]{footmisc}

\usepackage{array}

\usepackage{newtxmath}      
\usepackage{theoremref}
\usepackage{url}

\makeindex

\newcommand{\BLambda}{{B_\ell}}
\newcommand{\id}{\text{id}}
\newcommand{\ini}{\text{in}}

\newcommand{\MT}{\mathcal T}

\newcommand{\KK}{\mathbb{K}}

\titleformat{\subsection}[runin]
{\normalfont\bfseries}{\thesubsection}{.5em}{}
%
%
%
\def\NZQ{\mathbb}               

\def\RR{{\NZQ R}}

\def\AA{{\NZQ A}}
\def\PP{{\NZQ P}}

%
%

\def\frk{\mathfrak}               

\def\Phi{{\frk n}}
\def\Phi{{\frk N}}

\def\wb{{\mathbf w}}

\def\A{{\mathcal A}}

%

%

%
\def\opn#1#2{\def#1{\operatorname{#2}}} 
%
\opn\chara{char} \opn\length{\ell} \opn\pd{pd} \opn\rk{rk}
\opn\projdim{proj\,dim} \opn\injdim{inj\,dim} \opn\rank{rank}
\opn\depth{depth} \opn\grade{grade} \opn\height{height}
\opn\embdim{emb\,dim} \opn\codim{codim}

\opn\Tr{Tr} \opn\bigrank{big\,rank}
\opn\superheight{superheight}\opn\lcm{lcm}
\opn\trdeg{tr\,deg}
\opn\reg{reg} \opn\lreg{lreg} \opn\ini{in} \opn\lpd{lpd}
\opn\size{size} \opn\sdepth{sdepth}
\opn\link{link}\opn\fdepth{fdepth}\opn\lex{lex}
\opn\LM{LM}
\opn\LC{LC}
\opn\NF{NF}
\opn\Merge{Merge}
\opn\sgn{sgn}
\opn\type{type}
%
\opn\div{div} \opn\Div{Div} \opn\cl{cl} \opn\Pic{Pic}
\opn\Prin{Prin}
\opn\op{op}
\opn\indeg{indeg} \opn\outdeg{outdeg}
\opn\red{red}
%
%
\opn\Spec{Spec} \opn\Supp{Supp} \opn\supp{supp} \opn\Sing{Sing}
\opn\Ass{Ass} \opn\Min{Min}\opn\Mon{Mon} \opn\val{val}
%
%
\opn\Ann{Ann} \opn\Rad{Rad} \opn\Soc{Soc}
%
%
 \opn\Ker{Ker} \opn\Coker{Coker} \opn\Am{Am}
\opn\Hom{Hom} \opn\Tor{Tor} \opn\Ext{Ext} \opn\End{End}
\opn\Aut{Aut} \opn\id{id}

\opn\nat{nat}
\opn\pff{pf}
\opn\Pf{Pf} \opn\GL{GL} \opn\SL{SL} \opn\mod{mod} \opn\ord{ord}
\opn\Gin{Gin} \opn\Hilb{Hilb}\opn\sort{sort}
\opn\Image{Image}
\opn\vol{Vol}
%
%
\opn\aff{aff} \opn\con{conv} \opn\relint{relint} \opn\st{st}
\opn\lk{lk} \opn\cn{cn} \opn\core{core} \opn\vol{vol}
\opn\link{link} \opn\star{star}\opn\lex{lex}\opn\set{set}
\opn\dist{dist}
\opn\gr{gr}

%
%

\def\pot#1#2{#1[\kern-0.28ex[#2]\kern-0.28ex]}

%
%
\opn\dirlim{\underrightarrow{\lim}}
\opn\inivlim{\underleftarrow{\lim}}
%
%
%

%
%

\def\Implies{\ifmmode\Longrightarrow \else
        \unskip${}\Longrightarrow{}$\ignorespaces\fi}
\def\implies{\ifmmode\Rightarrow \else
        \unskip${}\Rightarrow{}$\ignorespaces\fi}
\def\iff{\ifmmode\Longleftrightarrow \else
        \unskip${}\Longleftrightarrow{}$\ignorespaces\fi}

\let\:=\colon

\newtheorem{theorem}{Theorem}[section]
\newtheorem{lemma}[theorem]{Lemma}

\newtheorem{proposition}[theorem]{Proposition}

\newtheorem{conjecture}[theorem]{Conjecture}
\newtheorem{question}[theorem]{Question}
\theoremstyle{remark}
\newtheorem{remark}[theorem]{Remark}

\theoremstyle{definition}
\newtheorem{example}[theorem]{Example}

\newtheorem{definition}[theorem]{Definition}
\newtheorem{Notation}{Notation}[section]
\newtheorem*{theorem*}{Theorem}
\newtheorem*{corollary*}{Corollary}
\DeclareMathOperator{\Gr}{Gr}

\let\kappa=\varkappa

\def\qed{\ifhmode\textqed\fi
      \ifmmode\ifinner\quad\qedsymbol\else\dispqed\fi\fi}
\def\textqed{\unskip\nobreak\penalty50
       \hskip2em\hbox{}\nobreak\hfil\qedsymbol
       \parfillskip=0pt \finalhyphendemerits=0}
\def\dispqed{\rlap{\qquad\qedsymbol}}

%
\opn\dis{dis}
\def\pnt{{\raise0.5mm\hbox{\large\bf.}}}

\opn\Lex{Lex}
\opn\syz{{\rm syz}}
\opn\spoly{{\rm spoly}}
\opn\LM{{\rm LM}}
\opn\lm{{\rm lm}}
\opn\lcm{{\rm lcm}} \opn\A{\mathcal A}

\numberwithin{equation}{section}

\newcommand{\inwb}{{\rm in}_{{\bf w}_\ell}}

\DeclareMathOperator{\init}{in}

\usepackage{multirow}
\hypersetup{
  colorlinks=true,
}
\begin{document}

\begin{frontmatter}

\title{Standard monomial theory and toric degenerations of\\
Richardson varieties in the Grassmannian} 
\author{
Narasimha Chary Bonala, Oliver Clarke and Fatemeh Mohammadi
}

\begin{abstract}
Richardson varieties
are obtained as intersections of Schubert and opposite Schubert varieties.
We provide a new family of toric degenerations of 
Richardson varieties inside Grassmannians by studying Gr\"obner degenerations of their corresponding ideals. These degenerations are parametrised by block diagonal matching fields in the sense of Sturmfels-Zelevinsky \cite{sturmfels1993maximal}.
We associate a weight vector to each block diagonal matching field and study its corresponding initial ideal. 
In particular, we characterise when such ideals are toric, hence providing a family of toric degenerations for Richardson varieties. 

Given a Richardson variety $X_{w}^v$ and a weight vector ${\bf w}_\ell$ arising from a matching field, we consider two ideals: an ideal $G_{k,n,\ell}|_w^v$ obtained by restricting the initial of the Pl\"ucker ideal to a smaller polynomial ring, and a toric ideal defined as the kernel of a monomial map $\phi_\ell|_w^v$. We first characterise the monomial-free ideals of form $G_{k,n,\ell}|_w^v$. Then we construct a family of tableaux in bijection with semi-standard  Young  tableaux which leads to a monomial basis for the corresponding quotient ring. 
Finally, we prove that when $G_{k,n,\ell}|_w^v$ is monomial-free and the initial ideal in$_{{\bf w}_\ell}(I(X_w^v))$ is quadratically generated, then all three ideals in$_{{\bf w}_\ell}(I(X_w^v))$, $G_{k,n,\ell}|_w^v$ and ker$(\phi_\ell|_w^v)$ coincide, and provide a toric degeneration of 
$X_w^v$. 
\end{abstract}
\begin{keyword}
Gr\"obner and toric degenerations  \sep 
Grassmannians \sep semi-standard Young tableaux \sep Schubert varieties \sep Richardson varieties \sep  standard monomial theory
\end{keyword}
\end{frontmatter}
\vspace{-2mm}

{\hypersetup{linkcolor=black}\setcounter{tocdepth}{1}\setlength\cftbeforesecskip{.4pt}{\tableofcontents}}

\section{Introduction}\label{sec:intro}
Toric degenerations are a particularly useful tool for describing algebraic properties of varieties in terms of combinatorics of polytopes and polyhedral fans, see  \cite{anderson2013okounkov}. 
The goal of this paper is to construct a family of toric degenerations for Richardson varieties inside the Grassmannian. To do this, we will consider a family of matching fields, which were originally introduced by Sturmfels and Zelevinsky for studying certain Newton polytopes \cite{sturmfels1993maximal}. We study Gr\"obner degenerations of the Pl\"ucker ideals associated to matching fields.

Toric degenerations of Grassmannians and Schubert varieties have been extensively studied in the literature, see e.g.~\cite{gonciulea1996degenerations,caldero2002toric,fang2017toric,speyer2004tropical, rietsch2017newton, BFFHL}. In particular, most of these degenerations can be realised as Gr\"obner degenerations, even though, this is not true in general, see e.g.~\cite{kateri2015family}.
More recently, Kaveh and Manon \cite{kaveh2019khovanskii} used tools from tropical geometry to study toric degenerations of ideals. The main idea is that the initial ideals associated to the points in the top-dimensional facets of tropicalizations of ideals are good candidates for toric degenerations. A similar approach has been taken in studying toric degenerations of Grassmannians and Schubert varieties in \cite{KristinFatemeh, OllieFatemeh, kogan, bossinger2020families, serhiyenko2019cluster}.

Let $k \le n$ be positive integers and let ${\bf I}_{k,n} = \{I \subseteq [n] : |I| = k \}$ be the set of $k$-subsets of $[n] := \{1, \dots, n \}$. The Grassmannian $\Gr(k,n)$ is the collection of $k$-dimensional linear subspaces of $\KK^n$. By the Pl\"ucker embedding, $\Gr(k,n)$ is viewed as a projective subvariety of $\PP^{\binom n k - 1}$ whose ideal is denoted by $G_{k,n} \subseteq \KK[P_I]_{I \in {\bf I}_{k,n}}$. 
The intersection of Schubert and opposite Schubert varieties in $\Gr(k,n)$ give rise to Richardson varieties which are indexed by pairs of subsets $v,w \in {\bf I}_{k,n}$ where $v \le w$.
An explicit description of the ideal of a Richardson variety is given by Kreiman and Lakshmibai \cite{kreiman2002richardson}. We view this ideal as the \textit{restriction} of $G_{k,n}$ to the variables $P_I$ where $I$ is in the set
$T^v_{w}=\{I \in {\bf I}_{k,n} : v \le I \le w\}$. More specifically, the ideal of the Richardson variety $X_w^v$ is defined as
\[
I(X_w^v) := G_{k,n}|_{T_{w}^v} = \left(G_{k,n} + \langle P_I: I\in {\bf I}_{k,n}\backslash T_{w}^v\rangle \right) \cap \KK[P_I]_{I\in T_{w}^v}
.
\]
Our goal is to find toric degenerations of the Richardson variety $X_w^v$ by studying its Gr\"obner degenerations. We consider a collection of weight vectors ${\bf w}_\ell \in \RR^{\binom n k}$ associated to so-called \textit{block diagonal matching fields}, see Definition~\ref{example:block} and Remark~\ref{def:block}. In \cite{OllieFatemeh}, it is shown that each ideal $\init_{{\bf w}_\ell}(G_{k,n})$ is  toric and equals to the kernel of a monomial map $\phi_\ell$, 
see Equation~\eqref{eqn:monomialmap} and Theorem~\ref{thm:JAlebra}. 
In particular, each vector ${\bf w}_\ell$ gives rise to a flat family over $\AA^1$ whose fiber over $0$ is given by the initial ideal $\init_{{\bf w}_\ell}(G_{k,n})$, see e.g.~\cite[Theorem~15.17]{eisenbud2013commutative}. 
Here, for the Richardson variety we project the weight vector ${\bf w}_\ell$ to the coordinates corresponding to the variables in the polynomial ring 
$\KK[P_I]_{I\in T_w^v}$
and study the 
following question. Note that this is related to {\it Degeneration Problem} posed by Caldero \cite{caldero2002toric} for Schubert varieties.
\begin{question}\label{intro:question}
When are the initial ideals $\init_{\bf{w}_\ell}(I(X_w^v))$ toric (binomial and prime)?
\end{question}
We have summarised our approach to this question in the following diagram. 
\begin{equation}\label{diagram:intro_commutative}
\begin{aligned}
\xymatrixcolsep{3.5pc}
 \xymatrix{
    {G_{k,n}} 
    \ar@{~>}[r]^-{\text{initial ideal}}
    \ar@{~>}[d]_{\text{restriction}} &
    {\init_{{\bf w}_\ell}(G_{k,n})} 
    \ar@{~>}[d]^{\text{restriction}}
    \ar@2{-}[r]^-{\eqref{eq:JAlgebra}} &
    {\ker(\phi_\ell) 
    \subseteq
    \KK[P_I : I \in {\bf I}_{k,n}] 
    \xlongrightarrow{\phi_\ell}
    \KK[x_{i,j}]}
    \ar@{~>}[d]^-{\text{restriction of $\phi_\ell$}}
    \\
    {I(X_w^v)} 
    \ar@{~>}[r]_-{\text{initial ideal}}
    & 
    {G_{k,n,\ell}|_{w}^v}
    \ar@2{-}[r]^-{\textcolor{blue}{{\bm ?}}} &
    {\ker(\phi_\ell|_w^v)
    \subseteq
    \KK[P_I : I \in T^v_{w}] 
    \xlongrightarrow{\phi_\ell|_w^v}
    \KK[x_{i,j}]} \\
    }
\end{aligned}
\end{equation}
A typical approach to study the ideal $\init_{\bf{w}_\ell}(I(X_w^v))$ is to search for a Gr\"obner basis which can be viewed as understanding the horizontal arrows in the above diagram. By \cite{OllieFatemeh}, we have a clear description of the ideals in the first row of the diagram and their interrelations, namely $\init_{\bf{w}_\ell}(G_{k,n}) = \ker(\phi_\ell)$. To find toric initial ideals $\init_{\bf{w}_\ell}(I(X_w^v))$ we will determine when the left-hand square is commutative and when `${\bm?}$' is an equality. To do so, we first study the ideals $G_{k,n,\ell}|_w^v$ obtained by restricting $\init_{{\bf w}_\ell}(G_{k,n})$ to the variables $P_I$, where $I\in T_{w}^v$. We characterise such monomial-free ideals and show that in this case, $\init_{\bf{w}_\ell}(I(X_w^v))$ is equal to $G_{k,n,\ell}|_w^v$. 
Here, $\KK[x_{i,j}]$ is the polynomial ring in the indeterminates $x_{i,j}$ for $i\in[k]$ and $j\in[n]$. 

\begin{theorem*}[Theorem~\ref{thm:main}]\label{thm:main_intro}
Let $\bf{w}_\ell$ be the weight vector induced by a block diagonal matching field for $\Gr(k,n)$. If the ideal $G_{k,n,\ell}|^v_w$ is monomial-free and 
$\init_{\bf{w}_\ell}(I(X_w^v))$ is generated by degree two polynomials, 
then Diagram~(\ref{diagram:intro_commutative}) commutes. Explicitly the ideals $\init_{\bf{w}_\ell}(I(X_w^v))$, $G_{k,n,\ell}|^v_w$ and $\ker(\phi_\ell|_w^v)$ are all equal and provide a toric degeneration for the Richardson variety $X_w^v$.
\end{theorem*}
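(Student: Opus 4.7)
The plan is to establish the chain of inclusions
\[
\init_{{\bf w}_\ell}(I(X_w^v)) \subseteq G_{k,n,\ell}|_w^v \subseteq \ker(\phi_\ell|_w^v)
\]
and then show that the three quotient rings have the same Hilbert function, forcing equality throughout. The first inclusion comes from the general fact that taking initial ideals commutes with restriction to a coordinate subring only up to inclusion, combined with the defining formula of $I(X_w^v)$ as an elimination ideal. The second inclusion is inherited from Theorem~\ref{thm:JAlebra}: since $G_{k,n,\ell}|_w^v$ is obtained by restricting $\init_{{\bf w}_\ell}(G_{k,n}) = \ker(\phi_\ell)$ to the subring $\KK[P_I]_{I \in T_w^v}$, and since $\phi_\ell|_w^v$ is the matching restriction of $\phi_\ell$, every element of $G_{k,n,\ell}|_w^v$ is annihilated by $\phi_\ell|_w^v$.

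To promote the inclusions to equalities, I would compare Hilbert functions in each graded degree. On the toric side, the algebra $\KK[P_I]_{I \in T_w^v} / \ker(\phi_\ell|_w^v)$ is isomorphic to the monomial subalgebra of $\KK[x_{i,j}]$ generated by the images $\phi_\ell(P_I)$ with $I \in T_w^v$, and its Hilbert function is counted directly by the matching-field tableaux with columns in $T_w^v$. On the Richardson side, flatness of the Gr\"obner degeneration ensures that $\KK[P_I]_{I \in T_w^v}/I(X_w^v)$ and $\KK[P_I]_{I \in T_w^v}/\init_{{\bf w}_\ell}(I(X_w^v))$ share the same Hilbert function, while classical standard monomial theory expresses this count via semi-standard Young tableaux with entries compatible with the bounds $v \le I \le w$. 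The bridge between the two counts is the bijection between matching-field tableaux and SSYT constructed earlier in the paper.

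The crucial step, and what I expect to be the main obstacle, is showing that the matching-field tableau monomials actually form a vector space basis of $\KK[P_I]_{I \in T_w^v} / G_{k,n,\ell}|_w^v$. Linear independence follows from the monomial-free hypothesis together with the injectivity of $\phi_\ell|_w^v$ on distinct tableau monomials. Spanning is the delicate half: one must show that every monomial in the $P_I$ with $I \in T_w^v$ can be rewritten modulo $G_{k,n,\ell}|_w^v$ as a standard tableau monomial. Here the quadratic generation assumption on $\init_{{\bf w}_\ell}(I(X_w^v))$ is essential, because the reduction is carried out using the degree-two binomial generators of $G_{k,n,\ell}|_w^v$ coming from restricted initial Pl\"ucker relations; the monomial-free hypothesis guarantees that these restricted relations remain honestly binomial rather than degenerating to pure monomials, while quadratic generation prevents higher-degree obstructions from entering the straightening algorithm. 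Once spanning is verified, the three Hilbert functions agree, the inclusions upgrade to equalities, and because $\ker(\phi_\ell|_w^v)$ is the kernel of a monomial map, the flat family over $\mathbb{A}^1$ determined by ${\bf w}_\ell$ realises the desired toric degeneration of the Richardson variety $X_w^v$.
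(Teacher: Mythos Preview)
Your overall plan---establish a chain of inclusions among the three ideals and then force equality by a Hilbert-function count via the bijection between semi-standard Young tableaux and matching-field tableaux---is exactly the paper's strategy. However, your chain of inclusions is written in the \emph{wrong direction}, and this is a genuine gap rather than a cosmetic slip.

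You claim $\init_{{\bf w}_\ell}(I(X_w^v)) \subseteq G_{k,n,\ell}|_w^v$ follows from ``the general fact that taking initial ideals commutes with restriction to a coordinate subring only up to inclusion.'' The general fact actually goes the other way: writing $\pi$ for the projection setting the vanishing Pl\"ucker variables to zero, one always has $\pi(\init_{\bf w}(I)) \subseteq \init_{\bf w}(\pi(I))$, because for any $f\in I$ either $\pi(\init_{\bf w}(f))=0$ or $\pi(\init_{\bf w}(f))=\init_{\bf w}(\pi(f))$. Translated to the present notation this gives $G_{k,n,\ell}|_w^v \subseteq \init_{{\bf w}_\ell}(I(X_w^v))$, which is precisely Lemma~\ref{lem:J_1=J_3_binomial}(ii) in the paper and the content of Diagram~(\ref{eq:inclusion}). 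The reverse inclusion you assert can fail already for principal ideals: take $I=\langle x_1+x_2+x_3\rangle$ with weights $(0,0,1)$ and kill $x_1,x_2$; then $\init_{\bf w}(I|_T)=\langle x_3\rangle$ while $(\init_{\bf w}(I))|_T=0$. So the inclusion you need for free is $\ker(\phi_\ell|_w^v)=G_{k,n,\ell}|_w^v \subseteq \init_{{\bf w}_\ell}(I(X_w^v))$, and the equality you must \emph{earn} by counting is the opposite containment.

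You also misplace the role of the quadratic-generation hypothesis. It is \emph{not} needed for the straightening/spanning argument: Proposition~\ref{lem:SSYT_Gamma_surj_restrict} shows that $\mathrm{Im}(\Gamma_\ell)|_w^v$ is a monomial basis of $\KK[P_I]_{I\in T_w^v}/\ker(\phi_\ell|_w^v)$ in degree two using only the monomial-free assumption, and $G_{k,n,\ell}|_w^v$ is automatically quadratically generated as the restriction of a quadratically generated ideal (Lemma~\ref{lem:elim_ideal_gen_set}). The hypothesis that $\init_{{\bf w}_\ell}(I(X_w^v))$ is quadratically generated enters only at the last step: the dimension count matches the degree-two pieces of $\ker(\phi_\ell|_w^v)$ and $\init_{{\bf w}_\ell}(I(X_w^v))$, and since the latter is assumed to be generated in degree two, equality of degree-two parts propagates to equality of the whole ideals.
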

Our strategy for the proof is to consider the following inclusions which always hold:
\begin{equation}\label{eq:inclusion}
 \xymatrixcolsep{5pc} 
 \xymatrix{
    \ker(\phi_\ell|^v_w)
    \ar@{^{(}->}[r]
    &G_{k,n,\ell}|^v_w
    \ar@{^{(}->}[r]
    & \init_{{\bf w}_\ell}(I(X_w^v))
    }.
\end{equation}
We proceed by classifying the restricted ideals  $G_{k,n,\ell}|^v_w$  which are monomial-free. For each block diagonal matching field $B_\ell$, we give combinatorial conditions on the permutations $v$ and $w$ for which $G_{k,n,\ell}|^v_w$ is monomial-free, see Theorem~\ref{thm:Rich}. Surprisingly, the conditions on $v$ and $w$ are independent and the  ideal $G_{k,n,\ell}|^v_w$ is monomial-free if and only if the ideals of the corresponding Schubert and opposite Schubert varieties are monomial-free. 

Our main tool to prove the equality of $\init_{\bf{w}_\ell}(I(X_w^v))$ and  $\ker(\phi_\ell|_w^v)$ 
is to apply the above classification to describe a \textit{monomial basis} for the quotient ring $\KK[P_I]_{I\in T_{w}^v}/\ker(\phi_\ell|^v_w)$.  
We use the Hodge's description \cite{hodge1943some} for \textit{standard} monomial basis of $\KK[P_I]_{I\in \mathbf{I}_{k,n}}/G_{k,n}$ which is in terms of semi-standard Young tableaux. We note that this basis is compatible with any Richardson variety $X_w^v$, i.e.~the basis elements that remain non-zero after restricting to $X_w^v$ form a basis for the coordinate ring associated of $X_w^v$. 
More specifically, we first prove that:

\begin{theorem*}[Proposition~\ref{lem:SSYT_Gamma_surj_restrict} and Theorem~\ref{thm:std_monomials}]
\label{thm:intro_std_monomials}
If $G_{k,n,\ell}|_w^v$ is monomial-free, then the size of a monomial basis in degree two of $\ker(\phi_\ell|_w^v)$ is equal to the number of semi-standard Young tableaux with two columns $I, J$ such that $v \le I, J \le w$. 
\end{theorem*}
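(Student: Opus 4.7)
The plan is to construct a bijection between the set of two-column SSYTs with both columns lying in $T_w^v$ and a monomial basis of the degree-two component of $\KK[P_I]_{I\in T_w^v}/\ker(\phi_\ell|_w^v)$. Since $\ker(\phi_\ell|_w^v)$ is a toric ideal generated by pure binomials, this quotient has a canonical monomial basis in bijection with the distinct degree-two monomials in the image of $\phi_\ell|_w^v$. So the task reduces to matching the degree-two monomials in the image of $\phi_\ell|_w^v$ with SSYT pairs $(I^*,J^*)$ where $v \le I^*, J^* \le w$.

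First, I would invoke the unrestricted version (Theorem~\ref{thm:JAlebra} together with classical Hodge theory): the toric ideal $\init_{{\bf w}_\ell}(G_{k,n})=\ker(\phi_\ell)$ has the property that its degree-two monomial basis in $\KK[P_I]_{I\in\mathbf{I}_{k,n}}/\ker(\phi_\ell)$ is indexed by \emph{all} two-column SSYTs on $\mathbf{I}_{k,n}$. In particular, two distinct SSYT pairs have distinct images under $\phi_\ell$; restricting this statement to pairs with columns in $T_w^v$ immediately yields injectivity of the SSYT-to-monomial map.

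For surjectivity, I would design a sorting procedure on pairs $(I,J) \in T_w^v \times T_w^v$: iteratively replace an unsorted pair $(A,B)$ by a new pair $(C,D)$ via a binomial $P_AP_B - P_CP_D \in \ker(\phi_\ell)$ that is decreasing with respect to a monomial order refining ${\bf w}_\ell$. Since $\phi_\ell(P_AP_B)=\phi_\ell(P_CP_D)$, the image monomial is preserved, and the procedure terminates at a unique SSYT pair $(I^*,J^*)$ representing the same equivalence class. The monomial-freeness hypothesis on $G_{k,n,\ell}|_w^v$ is precisely what keeps the sorting inside $T_w^v \times T_w^v$: if any $C$ or $D$ produced by the algorithm were to leave $T_w^v$, then reducing the binomial $P_AP_B-P_CP_D$ modulo $\langle P_I : I \notin T_w^v \rangle$ would produce the monomial $P_AP_B$ as an element of $G_{k,n,\ell}|_w^v$, contradicting the assumption.

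The main obstacle I anticipate is making the sorting algorithm explicit enough to guarantee termination at an SSYT pair using only binomials in $\ker(\phi_\ell)$, especially for block diagonal matching fields where the sorting must respect the block structure imposed by ${\bf w}_\ell$. Once this is in place, combining surjectivity with injectivity gives the desired bijection and yields the stated equality of cardinalities.
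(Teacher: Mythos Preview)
Your plan has a genuine gap in the injectivity step, and the same issue breaks the surjectivity step. You assert that, in the unrestricted case, two distinct two-column SSYTs have distinct images under $\phi_\ell$; this is the diagonal ($\ell=0$) picture and it is \emph{false} for $\ell>0$. Take $k=2$, $n=4$, $\ell=1$: both $(I,J)=(\{1,2\},\{3,4\})$ and $(I,J)=(\{1,3\},\{2,4\})$ are SSYT pairs, yet
\[
\phi_1(P_{12}P_{34})=x_{1,2}x_{2,1}\cdot x_{1,3}x_{2,4}=x_{1,3}x_{2,1}\cdot x_{1,2}x_{2,4}=\phi_1(P_{13}P_{24}).
\]
So the natural map from SSYTs to $\phi_\ell$-classes is not injective. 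Dually, the class of $P_{14}P_{23}$ contains \emph{no} SSYT representative at all, so your sorting procedure cannot terminate at an SSYT while staying inside $\ker(\phi_\ell)$: there is simply no binomial $P_{14}P_{23}-P_{I^*}P_{J^*}\in\ker(\phi_\ell)$ with $(I^*,J^*)$ semi-standard. The Hilbert-function counts still agree (both sides are $20$ here), but the bijection you propose does not exist.

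What the paper does instead is build an explicit map $\Gamma_\ell$ that alters the first two rows of an SSYT in a controlled way (Definition~\ref{def:Gamma_ell_deg2}) so that $\Gamma_\ell$ lands on genuine representatives of the $\phi_\ell$-classes; in the example above $\Gamma_1(\{1,3\},\{2,4\})=(\{2,3\},\{1,4\})$, which hits exactly the orphan class. Injectivity and surjectivity of $\Gamma_\ell$ are then proved by a short case analysis (Lemmas~\ref{lem:SSYT_Gamma_injective} and~\ref{lem:SSYT_Gamma_surj_any_tableau}). The restriction to $T_w^v$ under the monomial-free hypothesis is where the real work lies: Proposition~\ref{lem:SSYT_Gamma_surj_restrict} is proved by contrapositive, showing via the explicit classification in Theorem~\ref{thm:Rich} that if some $\Gamma_\ell$-image leaves $T_w^v$ then one can exhibit a monomial in $G_{k,n,\ell}|_w^v$. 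Your idea of deducing ``stays in $T_w^v$'' directly from monomial-freeness is on the right track, but it only applies once you have the correct representatives, and for $\ell>0$ those are $\Gamma_\ell$-images, not SSYTs.
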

\vspace{-2mm}
Then, we show that when $G_{k,n,\ell}|^v_w$ is monomial-free and $\init_{{\bf w}_\ell}(I(X_w^v))$ is quadratically generated, then the number of standard monomials for $\KK[P_I]_{I\in T_{w}^v}/\ker(\phi_\ell|^v_w)$ equals to the number of semi-standard Young tableaux.
In particular, the dimensions of the quotient rings of $\init_{\bf{w}_\ell}(I(X_w^v))$ and  $\ker(\phi_\ell|_w^v)$ are equal in each degree. This together with the inclusion in \eqref{eq:inclusion} implies that these ideals are equal.

\medskip

We note that the assumption that $\init_{{\bf w}_\ell}(I(X_w^v))$ is quadratically generated is crucial for our proof. However, we expect that whenever $G_{k,n,\ell}|_w^v$ is monomial-free then the ideal $\init_{\bf{w}_\ell}(I(X_w^v))$ is quadratically generated and we may remove this assumption.

\begin{conjecture}\label{conj:J_2_quad_gen}
If $G_{k,n,\ell}|_w^v$ is monomial-free then $\init_{\bf{w}_\ell}(I(X_w^v))$ is quadratically generated.
\end{conjecture}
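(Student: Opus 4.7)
The strategy is to compare Hilbert functions. Let $J := \langle \init_{{\bf w}_\ell}(I(X_w^v))_2 \rangle$ denote the ideal generated by the degree-two part of the initial ideal, so the inclusion $J \subseteq \init_{{\bf w}_\ell}(I(X_w^v))$ holds automatically. Quadratic generation is precisely the statement that this inclusion is an equality, and it will follow if the two quotients have the same Hilbert function in every degree. The target Hilbert function is known: since Gr\"obner degeneration preserves Hilbert functions, $\dim_\KK \bigl(\KK[P_I]_{I\in T_w^v}/\init_{{\bf w}_\ell}(I(X_w^v))\bigr)_d = \dim_\KK \bigl(\KK[P_I]_{I\in T_w^v}/I(X_w^v)\bigr)_d$, and by classical standard monomial theory for Richardson varieties (Hodge, and Kreiman--Lakshmibai in the form cited in the excerpt) this equals the number $s_d$ of semi-standard Young tableaux of shape $k\times d$ whose columns all lie in $T_w^v$.

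It therefore remains to bound $\dim_\KK \bigl(\KK[P_I]_{I\in T_w^v}/J\bigr)_d \leq s_d$, and the plan is to prove this via a straightening argument. I would show that the monomials corresponding to semi-standard tableaux with columns in $T_w^v$ span the quotient modulo $J$. Given a non-standard monomial $P_{I_1}\cdots P_{I_d}$, there must exist adjacent columns $I_a, I_{a+1}$ violating the tableau condition. The crucial use of the monomial-free hypothesis is that, as in the proof of Theorem~\ref{thm:std_monomials}, the corresponding matching-field sorting binomial $P_{I_a}P_{I_{a+1}} - P_{I_a'}P_{I_{a+1}'}$ lies in $\ker(\phi_\ell|_w^v) \subseteq J_2$ with both $I_a', I_{a+1}' \in T_w^v$: otherwise the restriction operation would force the monomial $P_{I_a}P_{I_{a+1}}$ into $G_{k,n,\ell}|_w^v$, contradicting monomial-freeness. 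An induction on a suitable well-order on tuples of columns (for example, lexicographic with respect to the matching-field sort) then reduces any monomial to a $\KK$-linear combination of the $s_d$ standard monomials, giving the upper bound and hence the desired equality of Hilbert functions.

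The main obstacle lies precisely in this straightening step beyond degree two. For degree two, Theorem~\ref{thm:std_monomials} already confirms that sorting a pair in $T_w^v$ preserves $T_w^v$ under the monomial-free hypothesis; in higher degrees one must verify (i) that pairwise sorting on an arbitrary tuple of columns from $T_w^v$ keeps every intermediate column in $T_w^v$, and (ii) that the reduction terminates and is confluent, independent of which adjacent pair is sorted at each step. Part (ii) I expect to follow from a weight-based monovariant using ${\bf w}_\ell$ itself, since each non-trivial sorting step strictly decreases the ${\bf w}_\ell$-weight of a monomial. Part (i) is the combinatorial heart of the conjecture: it requires a block-diagonal analogue of the tableaux arguments used in the proof of Theorem~\ref{thm:Rich}, guaranteeing that the ``detour'' through intermediate non-standard pairs never leaves the Bruhat interval $[v, w]$.
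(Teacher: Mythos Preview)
The statement you are attempting is labelled a \emph{conjecture} in the paper: the authors prove it only in the diagonal case $\ell = 0$ (Theorem~\ref{thm:intro_ell_0}) and otherwise offer only computational evidence (Remark~\ref{rem:computation}). There is no general proof in the paper to compare against.

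Your Hilbert-function framing is sound and, specialised to $\ell = 0$, it is exactly the paper's argument. In the diagonal case your obstacle (i) evaporates: row-sorting a tableau whose columns lie in $T_w^v$ keeps every column in $T_w^v$, because sorting row $r$ merely permutes entries that already satisfy $v_r \le \cdot \le w_r$, and the classical sorting lemma guarantees the result is still column-strict. That is precisely why Theorem~\ref{thm:intro_ell_0} works in all degrees without the quadratic hypothesis, whereas Theorem~\ref{thm:main} must assume it. For general $\ell$ your obstacle (i) is genuinely open, and you have correctly located it as the combinatorial heart of the problem.

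There is, however, a concrete error in your proposed termination argument (ii). You claim that each non-trivial sorting step strictly decreases the ${\bf w}_\ell$-weight, but this is false: every binomial $P_I P_J - P_{I'}P_{J'} \in \ker(\phi_\ell)$ satisfies $\phi_\ell(P_IP_J) = \phi_\ell(P_{I'}P_{J'})$ as monomials in $\KK[x_{i,j}]$, and since ${\bf w}_\ell(P_K)$ is by definition the $M_\ell$-weight of $\phi_\ell(P_K)$ (Remark~\ref{def:block_weight}), the two terms have identical ${\bf w}_\ell$-weight. Equivalently, $\ker(\phi_\ell) = \init_{{\bf w}_\ell}(G_{k,n})$ is a ${\bf w}_\ell$-homogeneous ideal, so ${\bf w}_\ell$ cannot serve as a monovariant for the straightening process. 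A valid termination argument would require a genuine term order refining ${\bf w}_\ell$; for $\ell = 0$ this is supplied by the classical sorting order, but exhibiting one compatible with the block structure for $\ell > 0$, and simultaneously with the interval $[v,w]$, is part of what makes the conjecture non-trivial.
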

In general, showing that an ideal is quadratically generated is a challenging task, see e.g.~\cite{hibi1987distributive,ene2011monomial,Ohsugi,Cone} for specific families of ideals arising from graphs, and \cite{KristinFatemeh,OllieFatemeh,CharyOllieFatemeh_WICA} for similar families of initial ideals associated to matching fields.
We prove Conjecture~\ref{conj:J_2_quad_gen} for $\ell=0$ in Theorem~\ref{thm:intro_ell_0} and provide computational evidence for general $\ell>0$ in Remark~\ref{rem:computation} and \cite{OllieCodeGr}.

\smallskip
\noindent{\bf Outline of the paper.}
In Section~\ref{sec:prim} we fix our notation and recall the definitions of the main objects under study such as Richardson varieties, matching fields and their associated ideals. 
Section~\ref{sec:gr} contains our main results characterising monomial-free ideals of form $G_{k,n,\ell}|_w^v$, see Theorem~\ref{thm:Rich}. 
In Section~\ref{sec:standard_monomial} we study monomial bases of Richardson varieties, see Theorem~\ref{thm:std_monomials}.
Section~\ref{sec:main_proofs} contains the proofs of our main results stated in the introduction, see Theorems~\ref{thm:main}.

\smallskip
\noindent{\bf Acknowledgement.} NC was supported by the SFB/TRR 191 ``Symplectic structures in Geometry, Algebra and Dynamics".
He gratefully acknowledges support from the Max Planck Institute for Mathematics in Bonn, and the EPSRC Fellowship EP/R023379/1 who supported his multiple visits to Bristol. 
OC was supported by EPSRC Doctoral Training Partnership award EP/N509619/1.
FM was partially supported by EPSRC Early Career Fellowship EP/R023379/1, the BOF Starting Grant of Ghent University STA/201909/038 and FWO grants (G023721N and G0F5921N). This project began during the ``Workshop for Young Researchers"
in Cologne. We would like to thank the organisers of the meeting, Lara Bossinger and Sara Lamboglia, and in particular, Stephane Launois for supporting the author's visit via the EPSRC grant EP/R009279/1.

\vspace{-2mm}
\section{Preliminaries}\label{sec:prim}
\vspace{-2mm}
Throughout we fix an algebraically closed field $\KK$. 
We let $[n]$ be the set $\{1, \dots, n \}$, ${\bf I}_{k,n}$ be the collection of $k$-subsets of $[n]$ and
$\KK[x_{i,j}]$ be the polynomial ring on the variables $x_{i,j}$ with $i \in [k]$ and $j \in [n]$. We first recall 
Richardson varieties and their corresponding standard monomial bases.
Then, we define matching fields and their associated ideals in \S\ref{sec:prelim_matching_field} and \S\ref{sec:ideals}.

\vspace{-2mm}
\subsection{Grassmannian varieties.}\label{sec:prelim_ideals}
The Grassmannian $\Gr(k,n)$ is the collection of all $k$-dimensional linear subspaces of $\KK^n$, which is embedded into a projective space 
as follows. Any point in the Grassmannian is the rowspan of some $k \times n$ matrix and two $k \times n$ matrices and have the same rowspan if and only if they have the same sequence of maximal minors up to a scalar. And so we obtain an embedding of $\Gr(k,n)$ into a projective space
\begin{eqnarray}\label{eq:embedding}
\Gr(k,n) \rightarrow \PP^{\binom n k - 1} \quad{\rm with}\quad X = (x_{i,j}) \mapsto (\det(X_I)),
\end{eqnarray}
where $X_I$ is the submatrix of $X$ with columns indexed by $I$. The Pl\"ucker embedding gives rise to a defining ideal for the Grassmannian. Consider the map
\begin{eqnarray}\label{eq:map}
\varphi_n:\ \KK[P_I]_{I \in {\bf I}_{k,n}}\rightarrow \KK[x_{i,j}]  \quad{\rm with}\quad P_I \mapsto \det(X_I)
\end{eqnarray}
where $X_I$ is the submatrix of $X = (x_{i,j})$ with columns indexed by $I$. The kernel of the map $\varphi_n$ is called the \textit{Pl\"ucker ideal of $\Gr(k,n)$} and we denote it by $G_{k,n}$.

\vspace{-2mm}
\subsection{Schubert and opposite Schubert varieties. }\label{sec:schubert_def}
Schubert and opposite Schubert varieties~are families of closed subvarieties of the Grassmannian $\Gr(k,n)$ which are indexed by ${\bf I}_{k,n}$. Note that there are a few equivalent ways to define Schubert varieties, see e.g.~\cite{lakshmibai2015grassmannian}. We consider the classical construction for these varieties as follows. Fix a basis $\{e_1, \dots, e_n \} \subseteq \KK^n$ and for each $i \in [k]$ define the subspaces $W_i = \langle e_1, \dots, e_i \rangle$ and
$W^i = \langle e_n, \dots, e_{n-i+1} \rangle$. For sets $v = \{v_1 < \dots < v_k\}$ and $ w = \{w_1 < \dots < w_k \} \in {\bf I}_{k,n}$ we define the corresponding Schubert variety $X_w$ and opposite Schubert variety $X^v$ to be
\[
X_w = \left\{V \in \Gr(k,n) : \dim(V \cap W_{w_j}) \ge j \textrm{ for all } j \in [k] \right\},\quad{\rm and}
\]
\[
X^v = \left\{ V \in \Gr(k,n) : \dim(V \cap W^{n-v_{(k-j+1)}+1}) \ge j \textrm{ for all } j \in [k]\right\}.
\]
Note that the sets $X_w$ and $X^v$ are closed subvarieties of $\Gr(k,n)$. 
There is a natural partial order on ${\bf I}_{k,n}$ given by $\{v_1 < \dots < v_k \} \le \{w_1 < \dots < w_k \}$ if and only if $v_1 \le w_1, \dots, v_k \le w_k$. This partial order allows us to see a number of properties of Schubert varieties.
\begin{remark}[Remark 5.3.4 and Corollary 5.3.5 \cite{lakshmibai2015grassmannian}]
Fix $v, w \in {\bf I}_{k,n}$ with $v = \{v_1 < \dots < v_k \}$. Let $X_v$ and $X_w$ be Schubert subvarieties of $\Gr(k,n)$.  Then the following hold:
\begin{enumerate}
    \item $\langle e_{v_1}, \dots, e_{v_k} \rangle \in X_w$ if and only if $v \le w$.
    \item $X_v \subseteq X_w$ if and only if $v \le w$.
    \item For all $I \in {\bf I}_{k,n}$, the function $P_I|_{X_w}$ is non-zero if and only if $I \le w$.
\end{enumerate}
Moreover, the Schubert variety $X_w \subseteq \PP^{\binom n k - 1}$ is precisely the zero set of the polynomials in the ideal $G_{k,n} +\langle P_I : I \nleq w\rangle$. 
It is often convenient to think of the ideal of the Schubert variety as a subset of the ring $\KK[P_I]_{I \le w}$.
\end{remark}

Analogous statements hold for the opposite Schubert variety $X^v$ whose ideal
is given by $G_{k,n} \cup \langle P_I : I \ngeq v \rangle$. Similarly, 
we think of the ideal of $X^v$ as a subset of the ring $\KK[P_I]_{I \ge v}$.

\vspace{-2mm}\subsection{Richardson varieties.}\label{intro:Richardson}

Fix $k \le n$ positive integers and let $v, w\in {\bf I}_{k,n}$. The Richardson variety $X_w^v$ associated to $v,w$ 
is defined as $X_w\cap X^v$.  We recall that $X_w^v\neq \emptyset$ if and only if $v\leq w$, see \cite[Corollary 2.1.2]{kreiman2002richardson}.
To fix our notation we define the set
\[
T^v_{w}=\{I \in {\bf I}_{k,n} : v \le I \le w\}.
\]
The associated ideal of $X_w^v$ is the restriction of $G_{k,n}$ to the ring $K[P_I]_{I \in T_w^v}$ or equivalently it is the sum of the ideals of the corresponding Schubert and opposite Schubert varieties:
\begin{align*}
I(X_w^v) &= G_{k,n}|_{T^v_{w}} := (G_{k,n} + \langle P_I: I\in {\bf I}_{k,n} \backslash T^v_{w}\rangle) \cap \mathbb K[P_I]_{I \in T^v_{w}} \\
&= (I(X_w)+I(X^v)) \cap \mathbb K[P_I]_{I \in T^v_{w}}.
\end{align*}
Schubert and opposite Schubert varieties are special examples of Richardson varieties, namely
$X_w = X_w^{id}$ and 
$X^v = X_{w_0}^v$ where $w_0 = \{n-k+1, \dots, n-1, n\}$ is the largest element of ${\bf I}_{k,n}$.

\begin{remark}
Richardson varieties are defined more generally for quotients of semi-simple algebraic groups $G$ by parabolic subgroups $P \subseteq G$ in~\cite{lakshmibai2003richardson}. When $G$ is of type $A_n$, they are indexed by permutations $v,w \in S_n$. In the classical formulation above for the Grassmannian, the parabolic subgroups are maximal. In this case, the permutations $v, w$ giving rise to distinct Richardson varieties can be taken to be pairs of \textit{Grassmann permutations}, which are left coset representatives of $S_n / (S_k \times S_{n-k})$. These coset representatives can be identified with subsets $I \in {\bf I}_{k,n}$. We can think of $I$ as a permutation which sends $[k]$ to $I$.
\end{remark}

\vspace{-2mm}
\subsection{Standard monomial basis.}
\label{rem:monomial_basis}
The Pl\"ucker algebra of the Grassmannian $\Gr(k,n)$ is the quotient of the polynomial ring $\KK[P_I]_{I\in{\bf I}_{k,n}}$ by the Pl\"ucker ideal $G_{k,n}$. 
In \cite{hodge1943some}, Hodge provided a combinatorial rule to choose a monomial basis for the Pl\"ucker algebra in terms of \textit{semi-standard Young tableaux}. A monomial $P = P_{I_1}P_{I_2} \dots P_{I_d}$ is called \textit{standard} if $I_1 \le \dots \le I_d$. The monomial $P$ is called \textit{standard for $X_w^v$} if $P$ is standard and $v \le I_i\le w$ for all $i \in [d]$.
It is convenient to write monomials as rectangular tableaux whose columns correspond to factors of the monomial. For example, the corresponding tableau of the monomial $P_I P_J$ with $I = \{i_1 < \dots < i_k\}$ and $J = \{j_1 < \dots < j_k\}$ is the following:
\begin{center}
    \begin{tabular}{|c|c|}
        \multicolumn{1}{c}{$I$} & 
        \multicolumn{1}{c}{$J$} \\
        \hline
        $i_1$ & $j_1$ \\
        \hline
        $i_2$ & $j_2$ \\
        \hline
        $\vdots$ & $\vdots$ \\
        \hline
        $i_k$ & $j_k$ \\
        \hline
    \end{tabular}
\end{center}

The tableaux corresponding to standard monomials are known as \textit{semi-standard Young tableaux}. These tableaux are defined by the property that the entries in each column are strictly increasing and the entries in each row are weakly increasing. 
 We recall that:

\begin{theorem}[Theorem 3.3.2 \cite{kreiman2002richardson}]\label{thm:SMT_Grassmannian}
The standard monomials for $X_w^v$ give a monomial basis for the associated algebra 
of the Richardson variety $\KK[P_I]_{I \in T^v_{w}} / I(X_w^v)$.
\end{theorem}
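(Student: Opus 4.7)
The plan is to mirror Hodge's classical argument for the Grassmannian and then localise it to $X_w^v$. The statement splits into two independent claims: the standard monomials for $X_w^v$ span $\KK[P_I]_{I\in T_w^v}/I(X_w^v)$, and they are linearly independent modulo $I(X_w^v)$. The spanning part is essentially formal, while linear independence is the substantive part of the proof.

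For spanning, I would first recall Hodge's straightening law for $G_{k,n}$: using the quadratic Plücker relations, any product $P_I P_J$ with $I,J$ incomparable is, modulo $G_{k,n}$, a $\KK$-linear combination of standard products $P_{I'}P_{J'}$ with $I' \le J'$ and $I' < I$ in a fixed total refinement of the Bruhat order. Iterating yields that the standard monomials of $\KK[P_I]_{I\in{\bf I}_{k,n}}$ span the Plücker algebra. Restricting along the surjection $\KK[P_I]_{I\in{\bf I}_{k,n}}/G_{k,n} \twoheadrightarrow \KK[P_I]_{I\in T_w^v}/I(X_w^v)$ and using the vanishing criteria recalled in \S\ref{sec:schubert_def} (namely $P_I|_{X_w}=0$ for $I\not\le w$ and $P_I|_{X^v}=0$ for $I\not\ge v$), every standard monomial containing an index outside $T_w^v$ vanishes on $X_w^v$. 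What remains are exactly the standard monomials for $X_w^v$, which therefore span.

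For linear independence, the plan is to exhibit an explicit parametrisation distinguishing the standard monomials. Concretely, I would work inside the opposite big cell, considering the intersection of $X_w^v$ with the affine chart where a fixed Plücker coordinate (say $P_v$) is nonzero; on this chart $X_w^v$ is parametrised by a matrix of indeterminates $(t_{ij})$ whose support is prescribed by $v$ and $w$. Pulling back each $P_I$ along this parametrisation gives a polynomial in the $t_{ij}$, and the key step is to identify, for every standard monomial $P_{I_1}\cdots P_{I_d}$ with $v\le I_1 \le \cdots \le I_d \le w$, a distinguished ``leading monomial'' in the $t_{ij}$ — for instance the one coming from the antidiagonal initial term of each minor — and check that this assignment $P_{I_1}\cdots P_{I_d} \mapsto \text{leading monomial}$ is injective on semi-standard tableaux supported on $T_w^v$. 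Injectivity is a combinatorial matching argument on Young tableaux, essentially reading the row entries column by column from the exponent vector of the monomial. A non-trivial linear dependence among standard monomials would then produce a non-trivial linear dependence among distinct monomials in $\KK[t_{ij}]$, a contradiction.

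The hard part is this injectivity/reconstruction step; it is the analogue of Hodge's original ``straightening is basis-preserving'' lemma but now has to cope simultaneously with both the Schubert bound $I\le w$ and the opposite-Schubert bound $I\ge v$. A technically cleaner alternative I would keep in reserve is a Hilbert-function argument: degenerate $X_w^v$ via a suitable Gröbner degeneration (for instance the diagonal matching field, corresponding to $\ell=0$ in the present paper, whose special fibre is a Stanley--Reisner scheme of a shellable simplicial complex) and compare the Hilbert function of the degenerate ring with the count of semi-standard Young tableaux with columns in $T_w^v$; since the spanning set has the correct cardinality in each degree it must be a basis. Either route relies on the ``product-like'' independence of the two conditions $I\le w$ and $I\ge v$, which also underlies Theorem~\ref{thm:Rich} below.
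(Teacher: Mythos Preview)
The paper does not supply its own proof of this statement: Theorem~\ref{thm:SMT_Grassmannian} is quoted verbatim from Kreiman--Lakshmibai \cite{kreiman2002richardson} and used as a black box (notably in the proofs of Theorems~\ref{thm:main} and~\ref{thm:intro_ell_0}). So there is no in-paper argument to compare your proposal against.

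That said, your outline is sound and is in fact close to how the cited reference proceeds. The spanning argument via Hodge straightening followed by restriction is exactly right. For linear independence, your first route (parametrise the open cell of $X_w^v$ by a matrix with support dictated by $v,w$, expand each $P_I$ as a minor, and show that distinct semi-standard tableaux in $T_w^v$ yield distinct leading monomials in the $t_{ij}$) is essentially the argument of \cite{kreiman2002richardson}; the combinatorial injectivity you describe is the crux there as well. One caution about your reserve route: invoking the diagonal ($\ell=0$) Gr\"obner degeneration to count Hilbert-function values would be circular \emph{within this paper}, since Theorem~\ref{thm:intro_ell_0} itself appeals to Theorem~\ref{thm:SMT_Grassmannian}. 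If you want a degeneration-based proof that is logically independent of the present paper, you would need to establish the Hilbert function of the $\ell=0$ special fibre by some other means (e.g.\ directly via the Hibi/distributive-lattice description of the Gelfand--Tsetlin toric ring).
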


\subsection{Matching fields.}\hspace{0.3mm}\label{sec:prelim_matching_field}
A matching field is a combinatorial object that encodes a weight vector for the polynomial ring $\KK[P_J]_{I \in {\bf I}_{k,n}}$ which is \textit{induced} from a weight vector for the polynomial ring $\KK[x_{i,j}]$. Here, we recall \textit{block diagonal} matching fields from \cite{KristinFatemeh, OllieFatemeh}.

\begin{definition}\label{example:block}
Given integers $k,n$ and 
$0\leq \ell\leq n$, we fix the $k \times n$ matrix $M_\ell$ with entries:
\begin{eqnarray}\label{eq:matrix}
M_\ell(i,j)=\begin{cases} 
      (i-1)(n-j+1) & \textrm{if } i\neq 2, \\
      \ell-j+1 & \textrm{if } i=2, 1\leq j\leq \ell,\\
      n-j+\ell+1 & \textrm{if } i=2, \ell< j\leq n.\\
   \end{cases}
\end{eqnarray}
Recall that $X=(x_{i,j})$ is a $k \times n$ matrix of indeterminates. For each $k$-subset $J$ of $[n]$, the initial term of the Pl\"ucker form $\varphi_n(P_J) \in \mathbb{K}[x_{i,j}]$ denoted by $\ini_{M_\ell}(P_J)$ is the sum of all terms in $\varphi_n(P_J)$ of the lowest weight, where the weight of a monomial $\bf m$ is the sum of entries in $M_\ell$ corresponding to the variables in $\bf m$. We write $M_\ell(\textbf{m})$ for the weight of $\textbf{m}$. We prove below that $\ini_{M_\ell}(P_J)$ is a monomial for each subset $J \subseteq [n]$.
The weight of each variable $P_J$ is defined as the weight of each term of $\textrm{in}_{M_\ell}(P_J)$ with respect to $M_\ell$, and it is called {\em the weight induced by $M_\ell$}. We write $\wb_\ell$ for the weight vector induced by $M_\ell$. 
\end{definition}

\begin{lemma}\label{lem:wt_add_const}
Let $M = (m_{i,j})$ and $M' = (m'_{i,j})$ be $k \times n$ weight matrices. Suppose there exists $p \in \{ 1, \dots, n\}$ such that $m_{i,j} = m'_{i,j}$ for all $i \in [k]$ and $j \in [n] \backslash p$. Suppose that there exists $c \in \RR$ such that $m'_{i,p} = m_{i,p} + c$ for all $i \in \{1, \dots, k \}$. Then the initial terms of the Pl\"ucker forms are the same with respect to $M$ and $M'$.
\end{lemma}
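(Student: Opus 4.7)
The plan is to analyze the structure of the Plücker form $\varphi_n(P_J) = \det(X_J)$ and observe that each of its monomials uses at most one variable from any given column of $X$, so a uniform shift of column $p$ in the weight matrix merely translates the weights of all relevant monomials by the same constant.

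More concretely, I would first recall that
\[
\varphi_n(P_J) = \det(X_J) = \sum_{\sigma} \sgn(\sigma)\, x_{1,\sigma(1)} x_{2,\sigma(2)} \cdots x_{k,\sigma(k)},
\]
where the sum runs over bijections $\sigma \colon [k] \to J$. Split into two cases according to whether $p \in J$ or not. If $p \notin J$, then no variable $x_{i,p}$ appears in any monomial of $\det(X_J)$, so changing the entries of column $p$ of the weight matrix has no effect on the weight of any monomial, and in particular $\init_M(P_J) = \init_{M'}(P_J)$.

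If instead $p \in J$, then for every bijection $\sigma \colon [k] \to J$ there is exactly one index $i_\sigma \in [k]$ with $\sigma(i_\sigma) = p$, so each monomial of $\det(X_J)$ contains exactly one variable from column $p$. Denoting by $\mathbf{m}_\sigma$ the corresponding monomial, we then have
\[
M'(\mathbf{m}_\sigma) \;=\; M(\mathbf{m}_\sigma) \;+\; \bigl(m'_{i_\sigma,p} - m_{i_\sigma,p}\bigr) \;=\; M(\mathbf{m}_\sigma) + c,
\]
using that $m'_{i,j} = m_{i,j}$ outside column $p$ and $m'_{i,p} - m_{i,p} = c$ for all $i \in [k]$. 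Thus passing from $M$ to $M'$ shifts the weights of all monomials in $\det(X_J)$ by the same constant $c$, which preserves the set of minimum-weight monomials. Hence $\init_M(P_J) = \init_{M'}(P_J)$ in this case as well.

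I do not expect any real obstacle here: the argument is a direct bookkeeping observation that each monomial of a maximal minor picks up exactly one entry per column of $X$. The only thing to be careful about is to treat the case $p \notin J$ separately (so that the statement is vacuously true) and to make sure the reader notices that the weight shift is \emph{uniform} across all terms of $\det(X_J)$, which is what guarantees that the $\argmin$ defining the initial term is unaffected.
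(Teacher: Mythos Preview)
Your proof is correct and follows exactly the same approach as the paper: split into the cases $p \notin J$ and $p \in J$, and in the latter use that every monomial of $\det(X_J)$ contains precisely one variable from column $p$ so that all weights shift by the same constant $c$. The paper's argument is slightly terser but otherwise identical.
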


\begin{proof}
Let $J$ be a $k$-subset of $[n]$. If $J$ does not contain $p$ then the submatrices of $M$ and $M'$ with columns indexed by $J$ coincide, hence the initial terms of the Pl\"ucker form $\varphi_n(P_J)$ with respect to $M$ and $M'$ are the same. On the other hand, if $J$ contains $p$ then consider each monomial $\textbf{x}$ in the Pl\"ucker form $\varphi_n(P_J)$. The monomial is squarefree and contains a unique variable of the form $x_{i,p}$ for some $i \in \{1, \dots, k \}$. Therefore, $M'(\textbf{x}) = M(\textbf{x}) + c$. Therefore, the initial term of $\varphi_n(P_J)$ is the same with respect to $M$ and $M'$.
\end{proof}

By the same method, one can prove an analogous result for weight matrices which differ by a constant in a particular row.

\begin{proposition}\label{prop:unique}
Let $M = M_0$ be the $k \times n$ weight matrix and $J$ 
be a $k$-subset. Then the initial term ${\rm in}_M(P_J)$ 
is the leading diagonal term, in particular, it is a monomial.
\end{proposition}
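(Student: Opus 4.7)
The plan is to exhibit the unique minimizer of the weight function over the $k!$ terms of the Pl\"ucker form $\varphi_n(P_J)$ by appealing to the strict rearrangement inequality. First I would simplify the matrix: when $\ell=0$ the piecewise definition of $M_\ell$ in \eqref{eq:matrix} collapses, since for $i=2$ the condition $1\le j\le \ell$ is vacuous and $n-j+\ell+1=n-j+1=(2-1)(n-j+1)$. Hence $M_0(i,j)=(i-1)(n-j+1)$ for all $i\in[k]$ and $j\in[n]$.

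Write $J=\{j_1<j_2<\cdots<j_k\}$ and expand the Pl\"ucker form as
\[
\varphi_n(P_J)=\sum_{\sigma\in S_k}\sgn(\sigma)\prod_{i=1}^k x_{i,j_{\sigma(i)}}.
\]
Each term has weight
\[
M_0\!\left(\prod_{i=1}^k x_{i,j_{\sigma(i)}}\right)=\sum_{i=1}^k (i-1)(n-j_{\sigma(i)}+1)=\sum_{i=1}^k (i-1)\,a_{\sigma(i)},
\]
where $a_i := n-j_i+1$. Determining $\init_{M_0}(P_J)$ thus reduces to identifying the permutations $\sigma$ minimizing the inner product between the sequence $(i-1)_{i=1}^k$ and a permutation of $(a_i)_{i=1}^k$.

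Now I would invoke the rearrangement inequality in its strict form. The sequence $(i-1)_{i=1}^k=(0,1,\dots,k-1)$ is strictly increasing, and since $j_1<\cdots<j_k$, the sequence $a_i=n-j_i+1$ is strictly decreasing. The strict rearrangement inequality then states that among all pairings, the sum $\sum_i(i-1)a_{\sigma(i)}$ is uniquely minimized by pairing the increasing sequence with the decreasing one, i.e.\ by $\sigma=\id$. Therefore exactly one monomial of $\varphi_n(P_J)$ has the lowest weight, namely the diagonal term $\sgn(\id)\prod_{i=1}^k x_{i,j_i}=\prod_{i=1}^k x_{i,j_i}$, which is what the statement claims.

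There is no serious obstacle here; the only thing to verify carefully is the strictness of the inequality, which follows because both sequences consist of distinct values, so uniqueness of the minimizer guarantees the initial form is a single (unsigned) monomial rather than a cancellation of several. As a side remark, Lemma~\ref{lem:wt_add_const} is not strictly needed for this proposition, but it indicates that the same conclusion is robust under shifts of columns (or rows) by constants, a fact that will be convenient later when comparing $M_0$ with other block diagonal weight matrices $M_\ell$.
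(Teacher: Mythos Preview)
Your proof is correct and genuinely different from the paper's. The paper argues by induction on $k$: assuming a minimizing permutation $\sigma$ has $\sigma(k)=p$, it uses the inductive hypothesis on $J\setminus\{j_p\}$ to pin down the remaining values of $\sigma$, then computes the weight difference against the diagonal term as $\sum_{i=p+1}^k(j_i-j_p)>0$, forcing $p=k$. Your approach bypasses the induction entirely by recognising the weight $\sum_{i}(i-1)a_{\sigma(i)}$ as a rearrangement sum of two strictly monotone sequences, so that the strict rearrangement inequality delivers both the minimizer $\sigma=\id$ and its uniqueness in one stroke. The gain is brevity and transparency; the paper's inductive computation, on the other hand, sets up the template reused in the more delicate Proposition~\ref{prop:unique_ell}, where the second row of $M_\ell$ is no longer monotone and a direct rearrangement argument would not apply without further work. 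Your closing remark about Lemma~\ref{lem:wt_add_const} is apt: that lemma is indeed the bridge the paper uses to reduce parts of the $\ell>0$ case back to this one.
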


\begin{proof}
We show that the leading diagonal term of the Pl\"ucker form $\varphi_n(P_J)$ i.e.~$x_{1,j_1}x_{2,j_2}\cdots x_{k,j_k}$ where $j_1<j_2<\cdots<j_k$ equals to $\textrm{in}_M(P_J)$. We proceed by induction on $k$. For $k = 1$ the result holds trivially. So assume $k > 1$.~We~have
\[
\varphi_n(P_J) = \sum_{\sigma \in S_k} x_{1,j_{\sigma(1)}} \cdots x_{k,j_{\sigma(k)}}.
\]
For each $\sigma \in S_k$ such that $x_{1,j_{\sigma(1)}} \cdots x_{k,j_{\sigma(k)}}$ has minimum weight with respect to $M$, consider the value $\sigma(k) \in [k]$. 
Suppose $\sigma(k) = p$ for some $p \in [k]$. Then, by induction, we have that the leading term of the $\varphi_n(P_{J \backslash j_p})$ is the leading diagonal term. So $\sigma(1) = 1, \dots, \sigma(p-1) = {p-1}, \sigma(p) = {p+1}, \dots, \sigma(k-1) = k$ and $\sigma(k) = p$, therefore, the weight of the monomial is
\begin{align*}
M(x_{1,j_{\sigma(1)}} \cdots x_{k,j_{\sigma(k)}}) &= 
\sum_{i = 1}^{p-1} (i-1)(n - j_i +1) +
\sum_{i = p+1}^k (i-2)(n - j_i + 1)
+ (k - 1)(n - j_p + 1) \\
&= \sum_{i = 1}^{k} (i-1)(n - j_i +1) - \sum_{i = p+1}^k (n-j_i + 1) + (k-p)(n - j_p +1) \\
&= M(x_{1,j_1} \cdots x_{k,j_k}) + \sum_{i = p+1}^k j_i - (k - p)j_p.
\end{align*}
Note that $j_p < j_{p+1} < \dots < j_k$. So $\sum_{i = p+1}^k j_i - (k - p)j_p > 0$. If $\sigma(k) < k$ then the weight $M(x_{1,j_{\sigma(1)}} \cdots x_{k,j_{\sigma(k)}})$ is not minimum. So $\sigma(k) = k$ and we are done by induction.
\end{proof}

\vspace{-3mm}
\begin{proposition}\label{prop:unique_ell}
Let $\ell \in \{1, \dots, n-1 \}$, $k \ge 2$, $M = M_\ell$, the $k \times n$ weight matrix, and $J = \{j_1 < \dots < j_k \} \subset [n]$. 
Then the initial term of the Pl\"ucker form $\varphi_n(P_J)$ is given by
\[
{\rm in}_M(P_J) = \left\{
\begin{tabular}{@{}ll}
     $x_{1, j_1}x_{2, j_2}x_{3, j_3}\dots x_{k, j_k}$ & if $j_1 > \ell$ or $j_2 \le \ell$, \\
     $x_{1, j_2}x_{2, j_1}x_{3, j_3} \dots x_{k, j_k}$ & otherwise. 
\end{tabular}
\right.
\]
In particular, the leading term $\textrm{in}_M(P_J)$ is a monomial.
\end{proposition}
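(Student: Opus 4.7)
The plan is to minimize the $M_\ell$-weight of the monomials $x_{1, j_{\sigma(1)}} \cdots x_{k, j_{\sigma(k)}}$ over $\sigma \in S_k$. Since $M_\ell$ coincides with $M_0$ in every row except row $2$, the argument used in Proposition \ref{prop:unique} still controls the placement in rows other than row $2$; what remains is to pin down the column assigned to row $2$. Writing $\phi(j) := M_\ell(2,j)$ for brevity, the task reduces to choosing $\beta \in [k]$ so that the assignment sending row $2$ to $j_\beta$ minimizes the total weight.

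First I would fix $\beta := \sigma^{-1}(2)$ and show that, among permutations with this value of $\beta$, the unique optimum sends the smallest remaining index to row $1$ (whose coefficient is $0$) and fills rows $3, 4, \dots, k$ with the remaining indices in strictly increasing order. This is the rearrangement argument of Proposition \ref{prop:unique}, applied to the $k-1$ rows with coefficients $0, 2, 3, \dots, k-1$. Denote the resulting minimum weight by $f(\beta)$. A direct computation, obtained by tracking which $j_p$ lies in which row as $\beta$ varies, yields $f(1) - f(2) = \phi(j_1) - \phi(j_2)$ and
\[
f(\beta) - f(2) = \phi(j_\beta) - \phi(j_2) + \sum_{i=3}^{\beta}(i-1)(j_i - j_{i-1}) \qquad \text{for } \beta \ge 3.
\]

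Next I would rule out $\beta \ge 3$. Using the telescoping identity $j_\beta - j_2 = \sum_{i=3}^\beta (j_i - j_{i-1})$ and the piecewise definition of $\phi$, a case split on whether $j_2, j_\beta$ both lie in $\{1, \dots, \ell\}$, both lie in $\{\ell+1, \dots, n\}$, or $j_2 \le \ell < j_\beta$ shows $f(\beta) - f(2) > 0$ in each case: the first two cases collapse to $\sum_{i=3}^\beta (i-2)(j_i - j_{i-1}) > 0$, while the mixed case uses the bound $\phi(j_\beta) - \phi(j_2) = n - j_\beta + j_2 \ge 1$. Finally, the comparison between $f(1)$ and $f(2)$ reduces to $\phi(j_1) - \phi(j_2)$: since $\phi$ is strictly decreasing on each of the blocks $\{1, \dots, \ell\}$ and $\{\ell+1, \dots, n\}$ and takes values $\le \ell$ on the first block and $\ge \ell+1$ on the second, $\phi(j_1) > \phi(j_2)$ when $j_1, j_2$ lie in the same block (so $\beta = 2$ wins, producing the diagonal monomial $x_{1,j_1} x_{2, j_2} x_{3, j_3} \cdots x_{k, j_k}$), and $\phi(j_1) < \phi(j_2)$ when $j_1 \le \ell < j_2$ (so $\beta = 1$ wins, producing the swapped monomial $x_{1, j_2} x_{2, j_1} x_{3, j_3} \cdots x_{k, j_k}$). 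Uniqueness of the minimizer in every case immediately implies that $\init_M(P_J)$ is a single monomial. The main obstacle is the careful bookkeeping needed to derive the formula for $f(\beta) - f(2)$ and to check that the ``mixed'' case $j_2 \le \ell < j_\beta$ never yields a tie.
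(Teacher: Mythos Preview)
Your approach is correct and genuinely different from the paper's. One notational slip: you write $\beta := \sigma^{-1}(2)$, but in the paper's convention the monomial is $x_{1, j_{\sigma(1)}} \cdots x_{k, j_{\sigma(k)}}$, so ``row $2$ receives $j_\beta$'' means $\sigma(2) = \beta$. Everything you actually compute uses the intended meaning, so this is cosmetic.

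The paper argues quite differently. It first disposes of the case $j_1 > \ell$ by noting that on the relevant columns $M_\ell$ and $M_0$ differ in row $2$ by a constant (the row analogue of Lemma~\ref{lem:wt_add_const}), reducing to Proposition~\ref{prop:unique}. For $j_1 \le \ell$ it inducts on $k$: fixing $p = \sigma(k)$, the column sent to the \emph{last} row, it applies the inductive hypothesis to $J \setminus \{j_p\}$ and then rules out each $p < k$ through a case analysis on whether the second-smallest element of $J \setminus \{j_p\}$ lies in $\{1, \dots, \ell\}$, with several further subcases. Your argument instead pivots on row $2$, uses the rearrangement inequality once (with the distinct row coefficients $0, 2, 3, \dots, k-1$) to nail down the optimal placement in the remaining rows, and reduces everything to the explicit comparison of the $f(\beta)$. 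This replaces the paper's induction and its six subcases by three short inequalities; in particular your mixed case $j_2 \le \ell < j_\beta$ is a one-line estimate where the paper spends Cases~2.1--2.3 and Case~3. The only caveat is that the optimization over rows $1, 3, \dots, k$ is not literally an instance of Proposition~\ref{prop:unique} (the coefficient pattern is $0, 2, 3, \dots, k-1$ rather than $0, 1, \dots, k-1$), so in a polished write-up you should state and justify that rearrangement step on its own rather than citing Proposition~\ref{prop:unique} verbatim.
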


\begin{proof}
Suppose that $j_1 > \ell$. By definition, the weight matrices $M_\ell$ and $M_0$ differ only in the second row. The entries of the second row are
\begin{align*}
(M_0):      & \quad [n \ n-1 \ \dots \ 1], \\
(M_\ell) :  & \quad [\ell \ \ell-1 \ \dots \ 1 \ n \ n-1 \ \dots \ \ell+1].
\end{align*}
Consider the submatrices of $M_0$ and $M_\ell$ consisting of the columns indexed by $J$. Since $j_1 > \ell$ the second row entries differ by exactly $\ell$ in each respective position. And so by the row-version of Lemma~\ref{lem:wt_add_const}, the leading term of the Pl\"ucker form $\varphi_n(P_J)$ is the same with respect to $M_0$ and $M_\ell$. By Proposition~\ref{prop:unique}, the initial term is $\textrm{in}_M(P_J) = x_{1, j_1}x_{2, j_2}x_{3, j_3}\dots x_{k, j_k}$.

Suppose that $j_1 \le \ell$. 
We will prove the result by induction on $k$. For the case $k = 2$,
\[
M_\ell = 
\begin{bmatrix}
0    & 0      & \dots & 0 & 0 & 0   & \dots & 0      \\
\ell & \ell-1 & \dots & 1 & n & n-1 & \dots & \ell+1 
\end{bmatrix}.
\]
If $j_1 > \ell$ or $j_2 \le \ell$ then the leading term of the Pl\"ucker form $\varphi_n(P_J)$ is the leading diagonal term, i.e.~$\textrm{in}_M(P_J) = x_{1, j_1} x_{2, j_2}$. Otherwise we have $j_1 \le \ell$ and $j_2 > \ell$, and so the leading term of the Pl\"ucker form is the antidiagonal term, i.e.~$\textrm{in}_M(P_J) = x_{1, j_2} x_{2, j_1}$.

Suppose $k > 2$. For each $\sigma \in S_k$ such that $x_{1, j_{\sigma(1)}} \dots x_{k, j_{\sigma(k)}}$ has minimum weight with respect to $M_\ell$, consider the value $p = \sigma(k) \in [k]$. Let $J' = J \backslash j_p = \{j_1' < j_2' < \dots < j_{k-1}' \}$. There are two cases for $J'$, either $j_2' \le \ell$ or $j_2' > \ell$.

\smallskip

\textbf{Case 1.} Assume $j_2' \le \ell$. By induction we have $\textrm{in}_M(P_{J'}) = x_{1, j_1'} x_{2, j_2'} \dots x_{k-1, j_{k-1}'}$. And so we have $\sigma(1) = 1, \dots, \sigma(p-1) = p-1, \sigma(p) = p+1, \dots, \sigma(k-1) = k, \sigma(k) = p$. Suppose by contradiction that $p \le k-1$, then we have
\vspace{-.4cm}
\[
    M_\ell(x_{1, j_{\sigma(1)}}  \dots x_{k, j_{\sigma(k)}}) - M_\ell(x_{1, j_1}  \dots x_{k, j_k}) 
    = \sum_{i = p}^k \left(M_\ell(x_{i, j_{\sigma(i)}}) - M_\ell(x_{i, j_i})\right)
    = \sum_{i = p}^k (i - 1)(j_i - j_{\sigma(i)})
\]
\vspace{-.4cm}
\[
    = \left(\sum_{i = p}^{k-1} (i-1)(j_i - j_{i+1})\right) + (k-1)(j_k - j_p)
    = \sum_{i = p+1}^{k} (j_i - j_p) > 0.
\]
But by assumption $x_{1, j_{\sigma(1)}} \dots x_{k, j_{\sigma(k)}}$ has minimum weight, a contradiction. And so we have $p = k$ hence $\textrm{in}_M(P_J) = x_{1, j_1} x_{2, j_2} \dots x_{k, j_k}$.

\smallskip

\textbf{Case 2.} Assume $j_2' > \ell$. Either we have $j_1' \le \ell$ or $j_1' > \ell$. In this case assume further that $j_1' \le \ell$, we will show that $j_1' > \ell$ is impossible in Case 3. By induction we have $\textrm{in}_M(P_{J'}) = x_{1, j_2'} x_{2, j_1'} x_{3, j_3'} \dots x_{k-1, j_{k-1}'}$. Assume by contradiction that $k \neq p$. We proceed by taking cases on $p$, either $p = 1$, $p = 2$ or $3 \le p \le k - 1$.

\textbf{Case 2.1} Assume $p = 1$. So we have $\sigma(1) = 3, \sigma(2) = 2, \sigma(3) = 4, \dots, \sigma(k-1) = k, \sigma(k) = 1$. Since $j_p < j_1' \le \ell$, we have
\[
    M_\ell(x_{1, j_{\sigma(1)}}  \dots x_{k, j_{\sigma(k)}}) - M_\ell(x_{1, j_1} \dots x_{k, j_k}) 
    = \left(\sum_{i = 4}^{k} (j_i - j_1)\right) + 2(j_3 - j_1) 
    > 0.
\]
But by assumption $x_{1, j_{\sigma(1)}} \dots x_{k, j_{\sigma(k)}}$ has minimum weight, a contradiction.

\textbf{Case 2.2.} Assume $p = 2$. So we have $\sigma(1) = 3, \sigma(2) = 1, \sigma(3) = 4, \dots, \sigma(k-1) = k, \sigma(k) = 2$. Since $j_p < j_1' \le \ell$, we have
\[
    M_\ell(x_{1, j_{\sigma(1)}}  \dots x_{k, j_{\sigma(k)}}) - M_\ell(x_{1, j_1}  \dots x_{k, j_k}) 
    = \left(\sum_{i = 4}^{k} (j_i - j_2)\right) + (j_3 - j_2) + (j_3 - j_1)> 0.
\]
But by assumption $x_{1, j_{\sigma(1)}} \dots x_{k, j_{\sigma(k)}}$ has minimum weight, a contradiction.

\textbf{Case 2.3.} Assume $3 \le p \le k-1$. And so we have $\sigma(1) = 2, \sigma(2) = 1, \sigma(3) = 3, \dots, \sigma(p-1) = p-1, \sigma(p) = p+1, \dots, \sigma(k-1) = k, \sigma(k) = p$. Therefore,
\[
    M_\ell(x_{1, j_{\sigma(1)}}  \dots x_{k, j_{\sigma(k)}}) - M_\ell(x_{1, j_2} x_{2, j_1} x_{3, j_3} \dots x_{k, j_k}) 
    = \sum_{i = p+1}^{k} (j_i - j_p) > 0.
\]
But by assumption $x_{1, j_{\sigma(1)}} \dots x_{k, j_{\sigma(k)}}$ has minimum weight, a contradiction.

\textbf{Case 3.} Assume $j_1', j_2' > \ell$. By induction, we have $\textrm{in}_M(P_{J'}) = x_{1, j_1'} x_{2, j_2'} \dots x_{k-1, j_{k-1}'}$. Since $j_1 \le \ell$ we must have $j_1' = j_2, \dots j_{k-1}' = j_k $ and so $\sigma(1) = 2$, $\sigma(2) = 3,\dots,$  $\sigma(k-1) = k$ and $\sigma(k) = 1$. Therefore, 

$
    M_\ell(x_{1, j_{\sigma(1)}}  \dots x_{k, j_{\sigma(k)}}) 
    - M_\ell(x_{1, j_2} x_{2, j_1} x_{3, j_3} \dots x_{k, j_k})= 
    \left(\sum_{i = 3}^{k} (j_i - j_1)\right)  + j_3 + n > 0.
$

\noindent But by assumption $x_{1, j_{\sigma(1)}} \dots x_{k, j_{\sigma(k)}}$ has minimum weight, a contradiction. 
\end{proof}

\vspace{-2mm}

\begin{definition}\label{def:matching_field_perm}
Given integers $k$, $n$ and $0\leq\ell\leq n$, $M_\ell$ leads to a permutation for each subset $J= \{i_1, \dots, i_k\} \subset [n]$. More precisely, we think of $J$ as being identified with the Pl\"ucker form $\varphi_n(P_J)$ and
we consider the set to be ordered by $J = \{i_1 < \dots < i_{k}\}$. Since $\ini_{M_\ell}(P_J)$ is a unique term in the corresponding minor of $X=(x_{i,j})$, we have $\ini_{M_\ell}(P_J)=x_{1,i_{\sigma(1)}}\cdots x_{k,i_{\sigma(k)}}$ for some $\sigma \in S_k$, which we call the permutation associated to $M_\ell$. We represent the variable $\ini_{M_\ell}(P_J)$ as a $k \times 1$ tableau where the entry of $(j, 1)$ is $i_{\sigma(j)}$ for each $j \in [k]$. We can think of $M_\ell$ as inducing a new ordering on the elements of $J$ which can be read from the tableau.
\end{definition}
\vspace{-2mm}
\begin{remark}\label{def:block_weight}
By Propositions~\ref{prop:unique} and \ref{prop:unique_ell} the initial term $\ini_{M_\ell}(P_J)$ is a monomial for each Pl\"ucker form $\varphi_n(P_J)$ where $J = \{j_1 < \dots < j_k \} \subset [n]$. These propositions give a precise description of the initial terms and the induced weight on the Pl\"ucker variable $P_J$ as follows.
\[
{\bf w}_\ell(P_J) = \left\{
\begin{tabular}{@{}ll}
    $0$ & if $k = 1$, \\
    $(n+\ell + 1 - j_2) + \sum_{i = 3}^k (i - 1)(n+1-j_i)$ & if $k \ge 1$ and $|J \cap \{1, \ldots, \ell \}| = 0 $, \\
    $(\ell + 1 - j_1) + \sum_{i = 3}^k (i - 1)(n+1-j_i)$ & if $k \ge 1$ and $|J \cap \{1, \ldots, \ell \}| = 1$, \\
    $(\ell+1 - j_2) + \sum_{i = 3}^k (i - 1)(n+1-j_i)$ & if $|J \cap \{1, \ldots, \ell \}| \ge 2$.
\end{tabular}\right.
\]
\end{remark}
\begin{Notation}\label{not:ini}
For each $\alpha=(\alpha_J)_{J}$ in $\mathbb{Z}_{\geq 0}^{{\binom n k}}$ we fix the notation ${\bf P}^{{\bf \alpha}}$ denoting the monomial $\prod_{J}P_J^{\alpha_J}$. We denote ${\rm in}_{{\bf w}_\ell}(G_{k,n})$ for the initial ideal of $G_{k,n}$ with respect to $\wb_\ell$. 
This is defined as the ideal generated by polynomials $\inwb(f)$ for all polynomials $f \in G_{k,n}$, where 
\vspace{-1mm}
\[\inwb(f)=\sum_{\alpha_j\cdot \wb_\ell=d}{c_{{\bf \alpha}_j}\bf P}^{{\bf \alpha}_j}\quad\text{for}\quad f=\sum_{i=1}^t c_{{\bf \alpha}_i}{\bf P}^{{\bf \alpha}_i}\quad\text{and}\quad d=\min\{\alpha_i\cdot \wb_\ell:\ i=1,\ldots,t\}.\] 
\end{Notation}
\begin{remark}\label{def:block}
Propositions~\ref{prop:unique} and \ref{prop:unique_ell} show that the permutation given by $M_\ell$ and associated to $J$, which defines the matching field, is given by:
\[
 \BLambda(J)= \left\{
     \begin{array}{@{}ll}
      id  & \text{if $k=1$ or $\lvert J \cap \{1,\ldots,\ell\}\rvert \neq 1$},\\
      (12)  & \text{otherwise}, \\
     \end{array}
   \right.
\]
where $(12)$ is the transposition interchanging $1$ and $2$. The functions $B_{\ell}$ are called $2$-block diagonal matching fields in \cite{KristinFatemeh}. Note that $\ell=0$ or $n$ gives rise to the choice of the diagonal terms in each submatrix as in Example~\ref{ex:diag}. Such matching fields are called {\em diagonal}. See, e.g.~\cite[Example 1.3]{sturmfels1993maximal}.  Given a block diagonal matching field $\BLambda$ we define $B_{\ell,1} = \{1, \dots, \ell \}$ and $B_{\ell,2} = \{\ell + 1, \dots, n \}$.
\end{remark}
\vspace{-2mm}
\begin{example}\label{ex:diag}
Let $k=3$, $n=5$ and $\ell=0$, so the matching field $B_\ell$ is the diagonal matching field, with $B_{\ell,1}=\emptyset$ and $B_{\ell,2}=\{1,2,3,4,5\}$. We have
\[
M_0=
\begin{bmatrix}
     0  & 0 & 0  & 0  & 0 \\
     5  & 4 & 3  & 2  & 1 \\
     10 & 8 & 6  & 4  & 2 \\
\end{bmatrix} \textrm{ a weight matrix for }
X =
\begin{bmatrix}
     x_{11}  & x_{12} & x_{13}  & x_{14}  & x_{15} \\
     x_{21}  & x_{22} & x_{23}  & x_{24}  & x_{25} \\
     x_{31}  & x_{32} & x_{33}  & x_{34}  & x_{35} \\
\end{bmatrix}\ .
\]
The corresponding weight vector on $P_{123}, P_{124},\ldots,P_{345}$ is
${\bf w}_0 =(10, 8, 6, 7, 5, 4, 7, 5, 4, 4).$
Thus, for each $J=\{i < j < k\} \subseteq [5]$ we have that $\text{in}_{M_0} (P_J) = x_{1i}x_{2j}x_{3k}$. Therefore, the corresponding tableaux for $P_J$ are:
\[\begin{tabular}{|c|}
    \hline 1 \\ \hline 2 \\ \hline 3  \\ \hline
\end{tabular}\ ,\ 
\begin{tabular}{|c|}
    \hline
    1    \\
    \hline
    2  \\
    \hline
    4  \\
    \hline
\end{tabular}
\ ,\ \begin{tabular}{|c|}
    \hline
    1    \\
    \hline
    2  \\
    \hline
    5  \\
    \hline
\end{tabular}
\ ,\ \begin{tabular}{|c|}
    \hline
    1    \\
    \hline
    3  \\
    \hline
    4  \\
    \hline
\end{tabular}
\ ,\ \begin{tabular}{|c|}
    \hline
    1    \\
    \hline
    3  \\
    \hline
    5  \\
    \hline
\end{tabular}
\ ,\ \begin{tabular}{|c|}
    \hline
    1    \\
    \hline
    4  \\
    \hline
    5  \\
    \hline
\end{tabular}
\ ,\ \begin{tabular}{|c|}
    \hline
    2    \\
    \hline
    3  \\
    \hline
    4  \\
    \hline
\end{tabular}
\ ,\ \begin{tabular}{|c|}
    \hline
    2    \\
    \hline
    3  \\
    \hline
    5  \\
    \hline
\end{tabular}
\ ,\ \begin{tabular}{|c|}
    \hline
    2    \\
    \hline
    4  \\
    \hline
    5  \\
    \hline
\end{tabular}
\ ,\ \begin{tabular}{|c|}
    \hline
    3    \\
    \hline
    4  \\
    \hline
    5  \\
    \hline
\end{tabular}\ .
\]
\end{example}
\noindent Note that each initial term $\text{in}_{M_0}(P_J)$ is the leading diagonal term of the Pl\"ucker form $\varphi_n(P_J)$. Let us consider a block diagonal matching field which is not diagonal.

\begin{example}\label{ex:ell_3_mf}
Let $k=3$, $n=5$ and $\ell=3$. Then $B_{\ell,1}=\{1,2,3\}$, $B_{\ell,2}=\{4,5\}$ and
\[
M_2=\begin{bmatrix}
     0  & 0 & 0  & 0  & 0 \\
     3  & 2 & 1  & 5  & 4 \\
     10 & 8 & 6  & 4  & 2 \\
\end{bmatrix}.
\]
Comparing this matrix with $M_0$, the weight matrix for the diagonal case, we see that the only differences are in the second row. The entries of the second row are obtained by permuting the entries in the second row of $M_0$. The corresponding weight vector on the Pl\"ucker variables $P_{123}, P_{124},\ldots,P_{345}$ is
${\bf w}_2=(8, 6, 4, 5, 3, 5, 5, 3, 4, 3).$
For each $J=\{i,j,k\}$ we have the corresponding tableaux for $P_J$ which are
\[
\begin{tabular}{|c|}
    \hline
    1    \\
    \hline
    2  \\
    \hline
    3  \\
    \hline
\end{tabular}\ ,\ \begin{tabular}{|c|}
    \hline
    1    \\
    \hline
    2  \\
    \hline
    4  \\
    \hline
\end{tabular}
\ ,\ \begin{tabular}{|c|}
    \hline
    1    \\
    \hline
    2  \\
    \hline
    5  \\
    \hline
\end{tabular}
\ ,\ \begin{tabular}{|c|}
    \hline
    1    \\
    \hline
    3  \\
    \hline
    4  \\
    \hline
\end{tabular}
\ ,\ \begin{tabular}{|c|}
    \hline
    1    \\
    \hline
    3  \\
    \hline
    5  \\
    \hline
\end{tabular}
\ ,\ \begin{tabular}{|c|}
    \hline
    4    \\
    \hline
    1  \\
    \hline
    5  \\
    \hline
\end{tabular}
\ ,\ \begin{tabular}{|c|}
    \hline
    2    \\
    \hline
    3  \\
    \hline
    4  \\
    \hline
\end{tabular}
\ ,\ \begin{tabular}{|c|}
    \hline
    2    \\
    \hline
    3  \\
    \hline
    5  \\
    \hline
\end{tabular}
\ ,\ \begin{tabular}{|c|}
    \hline
    4    \\
    \hline
    2  \\
    \hline
    5  \\
    \hline
\end{tabular}
\ ,\ \begin{tabular}{|c|}
    \hline
    4    \\
    \hline
    3  \\
    \hline
    5  \\
    \hline
\end{tabular}\ .
\]
\end{example}

\subsection{Matching field ideals.}\label{sec:ideals}
A matching field also admits the data of a monomial map $\mathbb K[P_J] \rightarrow \mathbb K[x_{i,j}]$ which takes each $P_J$ to a term of the Pl\"ucker form $\varphi_n (P_J) = \det(X_J) \in \mathbb K[x_{i,j}]$. We define the matching field ideal of $B_\ell$ 
to be the kernel of the monomial map 
\begin{eqnarray}\label{eqn:monomialmap}
\phi_{\ell} \colon\  \mathbb{K}[P_I]_{I \in {\bf I}_{k,n}}  \rightarrow \mathbb{K}[x_{i,j}]  
\quad\text{with}\quad
 P_{J}   \mapsto 
 \ini_{M_\ell}(P_J)
\end{eqnarray}
where $M_\ell$ is the matrix in \eqref{eq:matrix}.
We will show in Theorem~\ref{thm:main_intro} that whenever a $B_\ell$ gives rise to a toric initial ideal, then the corresponding toric ideal is equal to the matching field ideal. 

\begin{Notation}
\label{def:restricted}
Fix a Richardson variety $X_w^v$.
We write $\phi_\ell|_w^v$ for the restriction of the map \eqref{eqn:monomialmap} to the variables $P_J$ with $J\in T_{w}^v$ and $\ker(\phi_\ell|_w^v)$ for the associated matching field ideal. 
\end{Notation}

\vspace{-3mm}

\section{Monomial-free ideals arising from matching fields}\label{sec:gr}

Here,  we define the ideals $G_{k,n,\ell}|_w^v$ and characterise when they are monomial-free, see Theorem~\ref{thm:Rich}. Our main results in this paper 
precisely apply when these ideals are monomial-free.
We first summarise some of the important properties of  matching field ideals 
from \cite{OllieFatemeh}. 
\begin{theorem}[Theorems 4.1, 4.3 and Corollary 4.7 \cite{OllieFatemeh}]\label{thm:JAlebra}
  The initial ideal ${\rm in}_{{\bf w}_\ell}(G_{k,n})$ is toric and it is generated by quadratic binomials. Moreover, for the matching field ideal $\ker(\phi_\ell)$,
  \begin{equation}\label{eq:JAlgebra}
      {\rm in}_{{\bf w}_\ell}(G_{k,n})=\ker(\phi_\ell).
  \end{equation}
\end{theorem}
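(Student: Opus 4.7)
The plan is to establish the equality $\text{in}_{{\bf w}_\ell}(G_{k,n}) = \ker(\phi_\ell)$ as a two-way inclusion; toricity will then be automatic since $\ker(\phi_\ell)$ is the kernel of a monomial map, and quadratic generation will follow from exhibiting an explicit quadratic generating set on both sides.

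For the inclusion $\text{in}_{{\bf w}_\ell}(G_{k,n}) \subseteq \ker(\phi_\ell)$, the key observation is that for any monomial ${\bf P}^{\alpha}$ we have
\[
\phi_\ell({\bf P}^{\alpha}) \;=\; \prod_J \text{in}_{M_\ell}(\varphi_n(P_J))^{\alpha_J} \;=\; \text{in}_{M_\ell}(\varphi_n({\bf P}^{\alpha})),
\]
because the initial term of a product equals the product of initial terms (each $\text{in}_{M_\ell}(\varphi_n(P_J))$ is a single monomial by Propositions~\ref{prop:unique} and~\ref{prop:unique_ell}). By definition the $M_\ell$-weight of this monomial is $\alpha \cdot {\bf w}_\ell$. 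Now if $f = \sum_i c_i {\bf P}^{\alpha_i} \in G_{k,n}$ then $\varphi_n(f) = 0$, so the lowest-$M_\ell$-weight terms of $\sum_i c_i \varphi_n({\bf P}^{\alpha_i})$ must cancel. These lowest-weight terms are exactly the summands of $\phi_\ell(\text{in}_{{\bf w}_\ell}(f))$, whence $\text{in}_{{\bf w}_\ell}(f) \in \ker(\phi_\ell)$.

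For the reverse inclusion I would exhibit a family of explicit quadratic binomials lying in $\text{in}_{{\bf w}_\ell}(G_{k,n})$ and then show that they already generate $\ker(\phi_\ell)$. The natural candidates are the initial forms of the three-term Pl\"ucker relations: for appropriate $I, J$ one writes $P_I P_J = P_{I'} P_{J'} - P_{I''} P_{J''}$, and the explicit formula for ${\bf w}_\ell(P_J)$ in Remark~\ref{def:block_weight} makes it a direct case-check to decide which two of the three quadratic monomials attain the minimum $\bf{w}_\ell$-weight, producing an initial binomial ${\bf P}^{\alpha} - {\bf P}^{\beta} \in \text{in}_{{\bf w}_\ell}(G_{k,n})$. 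Comparing the multisets of variables $x_{i,j}$ in $\phi_\ell({\bf P}^{\alpha})$ and $\phi_\ell({\bf P}^{\beta})$ confirms these binomials lie in $\ker(\phi_\ell)$. To see they generate the whole kernel, one uses that $\ker(\phi_\ell)$ is the toric ideal of a matching-field algebra: two monomials have equal $\phi_\ell$-image iff they encode the same multiset of entries $x_{i,j}$ arranged into columns, and any two such arrangements can be connected by a sequence of the above swap moves, giving generation via a standard connectivity argument on the swap graph.

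The main obstacle is the case analysis in the second step when $\ell > 0$. For the diagonal case $\ell = 0$ the initial forms of the three-term Pl\"ucker relations are the classical sorting binomials of Sturmfels, and both the weight comparison and the connectivity of the swap graph are routine. When $\ell > 0$, however, the permutation $B_\ell(J)$ equals $(12)$ precisely when $|J \cap \{1,\dots,\ell\}| = 1$, breaking the symmetry between the first two rows and making the choice of leading monomial in each three-term relation depend delicately on how the involved index sets meet the blocks $B_{\ell,1}$ and $B_{\ell,2}$. I would organise the argument by stratifying pairs $(I,J)$ according to $|I \cap B_{\ell,1}|$ and $|J \cap B_{\ell,1}|$, reducing each stratum to essentially diagonal behaviour within each block before recombining; carrying this stratification uniformly through the connectivity argument on the swap graph is where the real combinatorial work lies.
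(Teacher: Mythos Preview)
This theorem is not proved in the present paper: it is quoted verbatim from \cite{OllieFatemeh} (Theorems~4.1, 4.3 and Corollary~4.7 there) and used as a black box throughout. So there is no proof in the paper to compare your proposal against.

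That said, your outline is a reasonable approach to the result itself. The inclusion $\init_{{\bf w}_\ell}(G_{k,n}) \subseteq \ker(\phi_\ell)$ is handled correctly: the point that $\phi_\ell({\bf P}^\alpha)$ is the unique minimum-$M_\ell$-weight term of $\varphi_n({\bf P}^\alpha)$, with weight exactly $\alpha\cdot{\bf w}_\ell$, is precisely what makes the weight-$d$ homogeneous component of $\varphi_n(f)=0$ equal to $\phi_\ell(\init_{{\bf w}_\ell}(f))$. This part is complete.

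For the reverse inclusion you have only a plan. You are right that the $\ell=0$ case reduces to Sturmfels' sorting relations and is classical. For $\ell>0$, however, the stratification by $|I\cap B_{\ell,1}|$ and $|J\cap B_{\ell,1}|$ and the associated connectivity argument on the swap graph is exactly the substantive content of \cite{OllieFatemeh}, and you have not carried it out. In particular, the three-term Pl\"ucker relations alone do not obviously suffice: one needs enough binomial relations to connect any two row-wise equal matching-field tableaux, and when the block structure mixes the first two rows this requires a careful inductive or case-by-case argument that is genuinely nontrivial. So your proposal identifies the right architecture but stops short of the actual proof; what you have written would serve as an introduction to the argument in \cite{OllieFatemeh} rather than a replacement for it.
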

The following notation will simplify the description of the ideals throughout this note.

\begin{Notation}
\label{notation:restricted}
Let $G \subset \mathbb{K}[P_I]_{I \in {\bf I}_{k,n}}$ be a collection of polynomials and $T$ be a collection of subsets of $[n]$. We identify $T$ with the characteristic vector of $T^{\rm c}$ that is $T_J = 1$ if $J \not\in T$ otherwise $T_J = 0$. For each $g \in G$ we write $g = \sum_{\alpha} c_\alpha {\bf P}^\alpha$ and define
\[
\hat g = \sum_{T \cdot\alpha = 0} c_\alpha {\bf P}^\alpha\quad\text{and}\quad 
G|_T= \{\hat g : g \in G \} \subseteq \mathbb{K}[P_I]_{I\in T}.
\]
\end{Notation}
We call $\langle G|_T\rangle $ the {\it restriction} of the ideal $\langle G\rangle$ to $T$. With the notation above we have:
\begin{lemma}[Lemma~6.3 in \cite{OllieFatemeh3}]\label{lem:elim_ideal_gen_set}
$\langle G|_T \rangle = \langle G \cup \{P_J:\ J\not\in T\} \rangle \cap \mathbb{K}[P_I]_{I \in T}$.
\end{lemma}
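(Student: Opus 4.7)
The statement has the flavor of a standard elimination-type identity, so the plan is to exhibit both inclusions explicitly and, for the nontrivial direction, exploit the substitution homomorphism that kills the variables $P_J$ with $J \notin T$. First I would unpack the definition: since $T$ is identified with the characteristic vector of $T^{\mathrm{c}}$, the condition $T \cdot \alpha = 0$ says exactly that $\alpha_J = 0$ for every $J \notin T$, i.e.\ $\mathbf{P}^\alpha$ lies in $\mathbb{K}[P_I]_{I \in T}$. Consequently $\hat g$ is obtained from $g$ by discarding every monomial that contains at least one variable $P_J$ with $J \notin T$, and equivalently $\hat g = \pi(g)$, where $\pi \colon \mathbb{K}[P_I]_{I \in {\bf I}_{k,n}} \to \mathbb{K}[P_I]_{I \in T}$ is the $\mathbb{K}$-algebra map defined by $\pi(P_I) = P_I$ for $I \in T$ and $\pi(P_J) = 0$ for $J \notin T$.

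For the inclusion $\langle G|_T \rangle \subseteq \langle G \cup \{P_J : J \notin T\}\rangle \cap \mathbb{K}[P_I]_{I \in T}$, I would take a generator $\hat g$ with $g \in G$ and write $g = \hat g + r$, where $r$ collects precisely the monomials of $g$ involving at least one $P_J$ with $J \notin T$. Each such monomial is divisible by some $P_J$ with $J \notin T$, so $r \in \langle P_J : J \notin T \rangle$, giving $\hat g = g - r \in \langle G \cup \{P_J : J \notin T\}\rangle$. Since $\hat g \in \mathbb{K}[P_I]_{I \in T}$ by construction, this inclusion is immediate and then closes under multiplication by elements of $\mathbb{K}[P_I]_{I \in T}$.

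For the reverse inclusion, take $f$ in the intersection on the right and write an expression $f = \sum_i h_i g_i + \sum_{J \notin T} q_J P_J$ with $g_i \in G$ and $h_i, q_J$ polynomials in the full ring. Applying the homomorphism $\pi$ above gives $\pi(f) = \sum_i \pi(h_i)\, \pi(g_i) = \sum_i \pi(h_i)\, \hat g_i$, which clearly lies in $\langle G|_T \rangle$. The key observation is then that $\pi$ fixes every element of $\mathbb{K}[P_I]_{I \in T}$, and $f$ belongs to this subring by hypothesis, hence $\pi(f) = f$. Combining these yields $f \in \langle G|_T \rangle$, as desired.

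The only subtle point — and the one I would double-check carefully — is that $\pi$ is a well-defined $\mathbb{K}$-algebra homomorphism and commutes with the particular choices of the $h_i$ and $q_J$; but since $\pi$ is defined on generators and extended multiplicatively, this is automatic. There is no genuine obstacle here: the argument is a clean application of the substitution trick for eliminating variables, and the claim follows in a few lines once the notation $\hat g = \pi(g)$ is made explicit.
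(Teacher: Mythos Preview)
Your proof is correct and complete; the substitution-homomorphism argument via $\pi$ is the standard way to establish such elimination identities, and there are no gaps. Note that the paper does not actually supply its own proof of this lemma but simply cites an external reference (Lemma~6.3 in \cite{OllieFatemeh3}), so there is no in-paper argument to compare against.
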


It is useful to think of $G|_T$ as the set obtained from $G$ by setting the variables $\{P_J: J \not\in T\}$ to zero. We say that the variable $P_J$ \textit{vanishes} in the ideal $\langle G|_T \rangle$ if $J \not\in T$. Similarly, we say that a polynomial $g$ vanishes in the restricted ideal $\langle G|_T \rangle$ if $g \in \langle P_J: J \not\in T \rangle$. 
The ideal $\langle G|_T \rangle$ can be computed in $\mathtt{Macaulay2}$~\cite{M2} as an elimination ideal using the command
$$\mathtt{eliminate}(\langle G\rangle + \langle P_J: J \not\in T \rangle, \{P_J : J \in T\}).$$

\begin{Notation}
\label{def:S}\label{def:matching_field_ideal_grassmannian}
Let $w=\{w_1,\ldots,w_k\}$ and $v=\{v_1,\ldots,v_k\} \in {\bf I}_{k,n}$ with $v\leq w$, and recall our notation from \S\ref{intro:Richardson}.
We denote the restricted ideals of ${\rm in}_{{\bf w}_\ell}(G_{k,n})$
as follows. 
\begin{equation}
\label{eq:gr}
    G_{k,n,\ell}|_w:={\rm in}_{{\bf w}_\ell}(G_{k,n})|_{T_{w}^{id}},
\ \ G_{k,n,\ell}|^v:={\rm in}_{{\bf w}_\ell}(G_{k,n})|_{T_{w_0}^v}\ \ {\rm and}\ \
G_{k,n,\ell}|_w^v:={\rm in}_{{\bf w}_\ell}(G_{k,n})|_{T_{w}^v}.
\end{equation}
Note that by Theorem~\ref{thm:JAlebra} the ideal ${\rm in}_{{\bf w}_\ell}(G_{k,n})$ is generated by a set of quadratic binomials whose restrictions to the set $T_{w}^v$ generate the above ideals by Lemma~\ref{lem:elim_ideal_gen_set}.
\end{Notation}

We are now ready to completely characterise 
monomial-free ideals of form $G_{k,n,\ell}|_w^v$.

\begin{theorem}\label{thm:Rich} Fix $k < n$ and $v,w \in {\bf I}_{k,n}$ with $v \le w$. 

\begin{itemize}
    \item If $\ell = 0$ or $\ell > n-k+1$ then the ideals $G_{k,n,\ell}|_w^v, G_{k,n,\ell}|_w$ and $G_{k,n,\ell}|^v$ are monomial-free. 
    \item Let $\ell \in \{1, \dots, n-k+1\}$, then the following hold:
\begin{itemize}
    \item[{\rm (i)}] The ideal $G_{k,n,\ell}|_w$ is monomial-free if and only if $w \in \MT_{k,n,\ell}$ which is the set of $\{w_1 < \dots < w_k\} \in {\bf I}_{k,n}$ such that at least one of the following hold:
    \begin{itemize}
        \item $w_1 \in\{1, \ell, n-k+1\}$,
        \item $w_2 \in \{1, \dots, \ell, w_1+1 \}$.
    \end{itemize}

    \item[{\rm (ii)}] The ideal $G_{k,n,\ell}|v$ is monomial-free if and only if $v \in \MT_{k,n,\ell}^{\rm opp}$ which is the set of $\{v_1 < \dots < v_k\} \in {\bf I}_{k,n}$ such that at least one of the following hold:
    \begin{itemize}
        \item $v_1 \in \{\ell+1, \dots, n\}$,
        \item $v_2 \in \{v_1+1, \ell+1\}$.
    \end{itemize}
    
    \item[{\rm (iii)}] The ideal $G_{k,n,\ell}|_w^v$ is monomial-free if and only if $w \in \MT_{k,n,\ell}$ and $v \in \MT_{k,n,\ell}^{\rm opp}$.
\end{itemize}
\end{itemize}\end{theorem}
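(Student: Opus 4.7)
The plan is to reduce the question to an analysis of the quadratic binomial generators of $\init_{{\bf w}_\ell}(G_{k,n})$. By Theorem~\ref{thm:JAlebra} the ideal $\init_{{\bf w}_\ell}(G_{k,n})$ is generated by quadratic binomials, and by Lemma~\ref{lem:elim_ideal_gen_set} the restricted ideal $G_{k,n,\ell}|_w^v$ is generated by the images of these binomials after setting $P_J = 0$ for every $J \notin T_w^v$. A quadratic binomial $P_A P_B - P_C P_D$ produces a monomial in the restriction exactly when one of its two terms has all variables in $T_w^v$ while the other has at least one variable outside $T_w^v$. So the task is to characterise those pairs $(v,w)$ for which no such asymmetric vanishing can occur.

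For the degenerate range $\ell = 0$ or $\ell > n-k+1$, I would observe that the matching field $B_\ell$ coincides with the diagonal matching field: for $\ell = 0$ this is Remark~\ref{def:block} directly, while for $\ell > n-k+1$ every $k$-subset $J = \{j_1 < \dots < j_k\}$ has $j_2 \le n-k+2 \le \ell$, so $|J \cap \{1,\dots,\ell\}| \ge 2$ and $B_\ell(J) = \id$. In this setting the quadratic binomial generators have the form $P_I P_J - P_{I \wedge J} P_{I \vee J}$ (componentwise min and max), and $v \le I, J \le w$ implies $v \le I \wedge J \le I \vee J \le w$, so the restriction never produces a monomial. For the genuinely interesting range $1 \le \ell \le n-k+1$, I would enumerate the quadratic generators of $\init_{{\bf w}_\ell}(G_{k,n})$ according to how the block structure $\{1,\dots,\ell\} \sqcup \{\ell+1,\dots,n\}$ interacts with each $A, B$, and for each type record under what conditions on the top (respectively bottom) element of $T_w^v$ the binomial can fail to restrict cleanly. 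For the sufficiency direction of (i), I would show that if $w \in \MT_{k,n,\ell}$ then every binomial whose ``sorted partner'' leaves $T_w$ has \emph{both} terms already outside $T_w$, so no monomial is created; for necessity, I would construct explicit binomials witnessing the failure when each of the two listed conditions on $w_1$ and $w_2$ fails simultaneously. Part (ii) follows by the symmetric analysis at the lower end of $T_w^v$, or equivalently by the duality $I \mapsto \{n+1-i : i \in I\}$ that swaps Schubert and opposite Schubert conditions and conjugates $B_\ell$ to $B_{n-\ell}$.

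Part (iii) should then follow by observing that the obstructing binomials in the analysis of (i) depend only on $w$ (they encode failure of closure under the ``sorting upward'' relations) and those in (ii) depend only on $v$ (failure of closure under ``sorting downward''), so $G_{k,n,\ell}|_w^v$ is monomial-free if and only if each of the one-sided restrictions $G_{k,n,\ell}|_w$ and $G_{k,n,\ell}|^v$ is, giving independence of the conditions on $v$ and $w$. The principal obstacle will be the case analysis in the interesting range: for each placement of $A, B, C, D$ relative to the block boundary at $\ell$ there is a slightly different relation between the sorted pair $(C,D)$ and the original $(A,B)$, and one must verify that the short lists $\{1, \ell, n-k+1\}$ and $\{1, \dots, \ell, w_1+1\}$ (and their counterparts for $v$) exactly capture every way that asymmetric vanishing can be avoided. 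Making this enumeration exhaustive, and exhibiting the explicit witness binomials needed for necessity, is where the combinatorial work concentrates.
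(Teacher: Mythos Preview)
Your overall strategy---reduce to the quadratic binomial generators of $\init_{{\bf w}_\ell}(G_{k,n})$, analyse when restriction to $T_w^v$ kills one term but not the other, and then combine the one-sided results for (iii)---is the paper's approach. Your treatment of the degenerate range is in fact cleaner than the paper's: observing that for $\ell>n-k+1$ every $k$-subset has $j_1<j_2\le n-k+2\le\ell$, so $B_\ell$ coincides with the diagonal matching field, and then using that the diagonal generators $P_IP_J-P_{I\wedge J}P_{I\vee J}$ never produce a monomial because the interval $T_w^v$ is closed under $\wedge$ and $\vee$, is a self-contained argument. The paper instead imports (i) from \cite{OllieFatemeh} and handles the opposite side via Lemma~\ref{lem:gr_zero_opposite}.

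The one genuine error is the duality you invoke for (ii). The involution $I\mapsto w_0I=\{n+1-i:i\in I\}$ does exchange Schubert and opposite Schubert constraints (this is precisely the paper's Key Lemma~\ref{lem:key_lemma}), but it does \emph{not} conjugate $B_\ell$ to $B_{n-\ell}$ unless $k=2$. Indeed $B_\ell(J)=(12)$ iff $|J\cap\{1,\dots,\ell\}|=1$, whereas $|w_0J\cap\{1,\dots,n-\ell\}|=k-|J\cap\{1,\dots,\ell\}|$, so $B_{n-\ell}(w_0J)=(12)$ iff $|J\cap\{1,\dots,\ell\}|=k-1$. Thus there is no shortcut reducing (ii) to (i); you must carry out the direct case analysis you also propose. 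The paper does exactly that: for $v\notin\MT_{k,n,\ell}^{\rm opp}$ it writes down explicit witness binomials (splitting on $v_2\le\ell$ versus $v_2\ge\ell+2$), and for the converse it assumes a surviving monomial $P_IP_J$ and derives contradictions by cases on $B_\ell(I),B_\ell(J),B_\ell(I')$. The paper does use the $w_0$-duality, but only in Lemma~\ref{lem:gr_zero_opposite} for the diagonal case $\ell=0$, where the matching field is genuinely $w_0$-invariant.

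For (iii) your heuristic that the obstructions ``depend only on $w$'' or ``only on $v$'' is the right intuition; the paper makes it precise by noting that if $P_IP_J-P_{I'}P_{J'}\in\init_{{\bf w}_\ell}(G_{k,n})$ restricts to the monomial $P_IP_J$ in $G_{k,n,\ell}|_w^v$, then the vanishing of $P_{I'}$ is caused by either $I'\nleq w$ or $I'\ngeq v$, placing the same monomial already in $G_{k,n,\ell}|_w$ or $G_{k,n,\ell}|^v$ respectively.
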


\begin{proof}
Suppose that $\ell = 0$ or $\ell > n-k+1$. By \cite[Theorem~5.7]{OllieFatemeh} we have that $G_{k,n,\ell}|_w$ is monomial-free. By Lemma~\ref{lem:gr_zero_opposite} we have that $G_{k,n,\ell}|^v$ is monomial-free. The proof that $G_{k,n,\ell}|_w^v$ is monomial-free follows from part (iii) of this proof.

\smallskip

{\bf (i)} This part follows immediately from \cite[Theorem~5.7]{OllieFatemeh}.

\smallskip

{\bf (ii)} We will show that $G_{k,n,\ell}|^v$ contains a monomial if and only if both $v_1\in \{1, \dots, \ell \}$ and $v_2\in \{v_1+2, \ldots, n\}\setminus \{\ell+1\}$.

Take $v \notin \MT_{k,n,\ell}^{\rm opp}$. We begin by showing that $G_{k,n,\ell}|^v$ contains a monomial by taking cases on $v_2$. Note that $v_2 \neq \ell + 1$.

\textbf{Case 1.} Let $v_2 \le \ell$. Consider the following sets which we write in the true order according to the matching field. Let
\[
I = \{\ell + 1, v_2 - 1, n - k + 3, n-k+4, \dots, n - 1, n\}, \quad
J = \{v_1, v_2, n - k + 3, n-k+4, \dots, n - 1, n\},
\]
\[
I' = \{v_1, v_2 - 1, n - k + 3, n-k+4, \dots, n - 1, n\} \text{ and }
J' = \{\ell+1, v_2, n - k + 3, n-k+4, \dots, n - 1, n\}.
\]
By construction we have that $P_IP_J - P_{I'}P_{J'}$ is a binomial in $\init_{{\bf w}_\ell}(G_{k,n})$. We have that $I' \not\ge v$ hence $P_{I'}$ vanishes in $G_{k,n,\ell}|^v$. However, $I \ge v$ and $J \ge v$ hence $P_IP_J$ appears as a monomial in $G_{k,n,\ell}|^v$.

\textbf{Case 2.} Let $v_2 \ge \ell + 2$. We now prove that $v_2 + k - 1 \le n$. Suppose by contradiction that $v_2 + k - 1 > n$ then it follows that $v_3 = v_2 + 1$, $v_4 = v_3 + 1$, \dots, $v_k = n$. Now we have that $w_0v = (1,2, \dots, k-1, n - v_1 + 1) \in Z_{k,n}$. By Lemma~\ref{lem:gr_zero_opposite}, $G_{k,n,\ell}|^v$ is zero, a contradiction. Therefore, $v_2 + k - 1 \le n$. It follows that there exists $j \in \{2, \dots, k\}$ such that $v_j + 1 \le n$ and $v_j + 1 \not\in\{v_{j+1}, v_{j+2}, \dots, v_k\}$. Consider the following sets which we write in the true order according to the matching field. Let
\[ 
I = \{v_2, v_1, v_3, \dots, v_k \}, \quad
J = \{\ell + 1, v_2 + 1, v_3 + 1, \dots, v_{j-1} + 1, v_{j} + 1, v_{j+1}, v_{j+2}, \dots, v_k\},
\]
\[ 
I' = \{\ell+1, v_1, v_3, \dots, v_k \} \text{ and }
J' = \{v_2, v_2 + 1, v_3 + 1, \dots, v_{j-1} + 1, v_{j} + 1, v_{j+1}, v_{j+2}, \dots, v_k \}.
\]
By construction we have that $P_IP_J - P_{I'}P_{J'}$ is a binomial in $\init_{{\bf w}_\ell}(G_{k,n})$. Since $v_2 \ge \ell + 2$, we have that $I' < v$ hence $P_{I'}$ vanishes in $G_{k,n,\ell}|^v$. However, $I \ge v$ and $J \ge v$ hence $P_IP_J$ appears as a monomial in $G_{k,n,\ell}|^v$.

For the converse we assume that $G_{k,n,\ell}|^v$ contains a monomial. If $\ell > n - k +1$ or $\ell=0$ then by Lemma~\ref{lem:gr_zero_opposite} we have that $G_{k,n,\ell}|^v$ is monomial-free, a contradiction. So we may assume that $\ell \le n-k+1$. Suppose by contradiction that $v_1 \not\in B_{\ell,1}$ then $v_1 \ge \ell+1$. Suppose $P_I P_J$ is a monomial appearing in $G_{k,n,\ell}|^v$. In particular, $P_I$ and $P_J$ do not vanish so we have that $I, J \ge v$. We deduce that $I \cap B_{\ell,1} = \emptyset$ and $J \cap B_{\ell,1}= \emptyset$. Suppose that the monomial $P_IP_J$ is obtained from the binomial $P_IP_J - P_{I'}P_{J'}$ in $\inwb(G_{k,n})$. Then we have that $I' \cap B_{\ell,1} = \emptyset$ and $J' \cap B_{\ell,1} = \emptyset$. Therefore, the true ordering on all indices $I, I', J, J'$ is the diagonal order. It follows that the same monomial must appear in the ideal $G_{k,n,0}|^v$. However, by Lemma~\ref{lem:gr_zero_opposite}, $G_{k,n,0}|^v$ is monomial-free, a contradiction. So we may assume that $v_1 \in B_{\ell,1}$.
It remains to show that if $G_{k,n,\ell}|^v$ contains a monomial then $v_2 \in \{v_1 + 2, \dots, n \} \backslash \{\ell + 1 \}$. By the above argument, we may assume that $1\le \ell \le n - k +1$ and $v_1 \in B_{\ell,1}$. Assume by contradiction that $v_2 \not \in \{v_1 + 2, \dots, n \} \backslash \{\ell + 1\}$. Then there are two cases, either $v_2 = v_1 + 1$ or $v_2 = \ell + 1$.

\textbf{Case 1.} Let $v_2 = v_1+1$ and $P_I P_J$ be a monomial in $G_{k,n,\ell}|^v$ arising from a binomial $P_I P_J - P_{I'} P_{J'}$ in $\init_{{\bf w}_\ell}(G_{k,n})$. Let us write $I = \{i_1 < \dots < i_k \}$  and $J = \{j_1 < \dots < j_k \}$. By assumption we have $I, J \ge v$ so in particular, $i_1, j_1 \ge v_1$ and $i_2, j_2 \ge v_2$. It is easy to see that $B_{\ell}(I) \neq B_{\ell}(J)$ otherwise it follows that $P_{I'}P_{J'}$ does not vanish in $G_{k,n,\ell}|^v$.
So without loss of generality, assume that $B_{\ell}(I) = id$ and $B_{\ell}(J) = (12)$. So, in tableau form, the binomial $P_I P_J - P_{I'} P_{J'}$ is given by
\[
\begin{tabular}{|c|c|}
    \multicolumn{1}{c}{$I$} & \multicolumn{1}{c}{$J$} \\
    \hline
    $i_1$ & $j_2$  \\
    \hline
    $i_2$ & $j_1$  \\
    \hline
    $\vdots$ & $\vdots$ \\
    \hline
\end{tabular}
\, - \,
\begin{tabular}{|c|c|}
    \multicolumn{1}{c}{$I'$} & \multicolumn{1}{c}{$J'$} \\
    \hline
    $i_1$ & $j_2$  \\
    \hline
    $j_1$ & $i_2$  \\
    \hline
    $\vdots$ & $\vdots$ \\
    \hline
\end{tabular} \, .
\]
Note, we must have the first two rows of these two tableaux are different, otherwise $P_{I'}P_{J'}$ does not vanish in $G_{k,n,\ell}|^v$. By assumption we have $i_2, j_2 \ge v_2$ hence $P_{J'}$ does not vanish in $G_{k,n,\ell}|^v$. Hence $P_{I'}$ must vanish. We take cases on $B_{\ell}(I')$.

\textbf{Case 1.1.} Let $B_{\ell}(I') = id$. Then we have $i_1 < j_1$. Since $P_{I'}$ vanishes, we must have $j_1 < v_2 = v_1 + 1$. Therefore, $i_1 < v_1$, a contradiction. 

\textbf{Case 1.2.} Let $B_{\ell}(I') = (12)$. Then we have $j_1 < i_1$. 
Since $P_{I'}$ vanishes we must 
have $i_1 < v_2 = v_1 + 1$, and so $j_1 < v_1$ which is a contradiction.

\textbf{Case 2.} Let $v_2 = \ell +1$. Let $P_I P_J$ be a monomial in $G_{k,n,\ell}|^v$ arising from a binomial $P_I P_J - P_{I'} P_{J'}$ in $\inwb(G_{k,n})$ and write
$
I = \{i_1 < \dots < i_k \}$ and 
$J = \{j_1 < \dots < j_k \}.
$
By assumption we have $I, J \ge v$ so in particular, $i_1, j_1 \ge v_1$ and $i_2, j_2 \ge v_2$. It is easy to see that $B_{\ell}(I) \neq B_{\ell}(J)$ otherwise it follows that $P_{I'}P_{J'}$ does not vanish in $G_{k,n,\ell}|^v$. So without loss of generality, assume that $B_{\ell}(I) = id$ and $B_{\ell}(J) = (12)$. So, in tableau form, the binomial $P_I P_J - P_{I'} P_{J'}$ is given by
\[
\begin{tabular}{|c|c|}
    \multicolumn{1}{c}{$I$} & \multicolumn{1}{c}{$J$} \\
    \hline
    $i_1$ & $j_2$  \\
    \hline
    $i_2$ & $j_1$  \\
    \hline
    $\vdots$ & $\vdots$ \\
    \hline
\end{tabular}
\, - \,
\begin{tabular}{|c|c|}
    \multicolumn{1}{c}{$I'$} & \multicolumn{1}{c}{$J'$} \\
    \hline
    $i_1$ & $j_2$  \\
    \hline
    $j_1$ & $i_2$  \\
    \hline
    $\vdots$ & $\vdots$ \\
    \hline
\end{tabular} \, .
\]
Note that the first two rows of these tableaux must be different, otherwise $P_{I'}P_{J'}$ does not vanish in $G_{k,n,\ell}|^v$. By assumption we have $i_2, j_2 \ge v_2 = \ell + 1$ hence $P_{J'}$ does not vanish in $G_{k,n,\ell}|^v$. Hence $P_{I'}$ must vanish. Since $j_1 \in B_{\ell,1}$, we must have $i_1 < v_2$. Since $B_{\ell}(I) = id$ and $i_2 \ge v_2 = \ell + 1 \in B_{\ell,2}$, we must have $i_1 \in B_{\ell,2}$. So $i_1 \ge \ell + 1$, a contradiction.
So we have shown that $v_2 \in \{v_1 + 2, \dots, n \} \backslash \{\ell + 1 \}$. Thus $v$ satisfies all desired conditions.

\smallskip

{\bf (iii)} Given parts (i) and (ii), this part is equivalent to showing that $G_{k,n,\ell}|_w^v$ is monomial-free if and only if both $ G_{k,n,\ell}|_w$ and $G_{k,n,\ell}|^v$ are monomial-free. By definition we have 
\vspace{-2mm}
\begin{align*}
    G_{k,n,\ell}|_w^v &= (\init_{{\bf w}_\ell}(G_{k,n}) + \langle P_I : I \in {\bf I}_{k,n} \backslash T_{w}^v \rangle) \cap \mathbb K[P_I]_{I \in T_{w}^v} \\
    &= (\init_{{\bf w}_\ell}(G_{k,n}) + \langle P_I : I \nleq w \rangle + \langle P_I : I \ngeq v \rangle) \cap \mathbb K[P_I]_{I \in T_{w}^v} \\
    &= \left((\init_{{\bf w}_\ell}(G_{k,n}) + \langle P_I : I \nleq w \rangle) \cap \mathbb K[P_I]_{I \in T_{w}^v}\right) \\
    &\quad + \left((\init_{{\bf w}_\ell}(G_{k,n}) + \langle P_I : I \ngeq v \rangle) \cap \mathbb K[P_I]_{I \in T_{w}^v}\right)\\
    &= G_{k,n,\ell}|_w + G_{k,n,\ell}|^v \subseteq \mathbb K[P_I]_{I \in T_{w}^v}.
\end{align*}
In the above we consider $G_{k,n,\ell}|_w$ and $G_{k,n,\ell}|^v$ as ideals of the ring $\mathbb K[P_I]_{I \in T_{w}^v}$ by inclusion of their generators.
On the one hand if $G_{k,n,\ell}|^v$ or $G_{k,n,\ell}|_w$ contain a monomial then the same monomial appears in $G_{k,n,\ell}|_w^v$.
On the other hand suppose that $G_{k,n,\ell}|_w^v$ contains a monomial $P_I P_J$ then $v \le I, J \le w$. Also there exists $I', J' $ such that $P_I P_J - P_{I'} P_{J'}$ is a binomial in $\init_{{\bf w}_\ell}(G_{k,n})$ and either $I', J' \not\ge v$ or $I', J' \not\le w$. If $I', J' \not\ge v$ then $P_I P_J$ is a monomial in $G_{k,n,\ell}|^v$. If $I', J' \not\le w$ then $P_I P_J$ is a monomial in $G_{k,n,\ell}|_w$.
\end{proof}

The proofs above rely on Lemma~\ref{lem:gr_zero_opposite} and which follows from the following key but straightforward observation.

\begin{lemma}[Key Lemma] \label{lem:key_lemma}
Let $I, J \in {\bf I}_{k,n}$. Then $I \le J$ if and only if $w_0 I \ge w_0 J$.
\end{lemma}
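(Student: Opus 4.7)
The plan is to unfold both sides of the claimed equivalence by explicit computation of the sorted tuples representing the elements of ${\bf I}_{k,n}$, and then observe that the order-reversing nature of the map $i \mapsto n-i+1$ coupled with the index-reversal in the sort order produces a double reversal that cancels. This is essentially a bookkeeping exercise rather than a substantive argument.

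More concretely, write $I = \{i_1 < \dots < i_k\}$ and $J = \{j_1 < \dots < j_k\}$. The first step is to describe $w_0 I$ and $w_0 J$ as sorted tuples. Since $w_0$ acts by $i \mapsto n-i+1$ and this map reverses order, the sorted form of $w_0 I$ is
\[
w_0 I = \{n - i_k + 1 < n - i_{k-1} + 1 < \dots < n - i_1 + 1 \},
\]
so the $r$-th smallest element of $w_0 I$ is $n - i_{k-r+1} + 1$; likewise for $w_0 J$. This is the only nontrivial observation needed.

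Next, unfold the partial order. By Subsection~\ref{sec:schubert_def}, $I \le J$ means $i_r \le j_r$ for all $r \in [k]$, while $w_0 I \ge w_0 J$ means $n - i_{k-r+1} + 1 \ge n - j_{k-r+1} + 1$ for all $r \in [k]$. The latter is equivalent to $i_{k-r+1} \le j_{k-r+1}$ for all $r \in [k]$. Substituting $s = k-r+1$, which is a bijection on $[k]$, this condition becomes $i_s \le j_s$ for all $s \in [k]$, i.e.\ $I \le J$. Both implications are then immediate.

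There is no real obstacle here; the only place one could slip is in the bookkeeping of which index corresponds to which rank after applying $w_0$, so the plan is simply to be careful about writing $w_0 I$ in increasing order before comparing componentwise. The argument is symmetric (both steps are iff's), so a single paragraph suffices for the proof.
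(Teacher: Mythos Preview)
Your proof is correct. The paper itself does not supply a proof of this lemma, calling it merely a ``key but straightforward observation,'' so your explicit computation of the sorted form of $w_0 I$ and componentwise comparison is exactly the kind of verification the authors leave to the reader.
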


The consequence of this observation is the following lemma which characterises the zero and monomial-free ideals for the opposite Schubert variety in the diagonal case, i.e. $\ell = 0$.

\begin{lemma} \label{lem:gr_zero_opposite}
We have the following:
\begin{itemize}
    \item[{\rm (i)}] The ideal $G_{k,n,\ell}|_w$ is zero if and only if $w \in Z_{k,n}$, where
    \[
Z_{k,n} = \{(1,2, \dots, k-1, i) : k \le i \le n \} \cup \{ (1, \dots, \hat{i}, \dots, k, k+1) : 1 \le i \le k-1 \}.
\]
    \item[{\rm (ii)}] The ideal $G_{k,n,0}|^v$ is zero if and only if $w_0v \in Z_{k,n}$,
    \item[{\rm (iii)}] If $\ell=0$ or $\ell>n-k+1$, then the ideal $G_{k,n,\ell}|^v$ is monomial-free.
\end{itemize}
\end{lemma}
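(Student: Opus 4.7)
The three parts are related, and the plan is to prove (i) directly, then reduce (ii) and (iii) to (i) via a $w_0$-symmetry of the diagonal matching field.

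For part (i), I would first verify the forward direction by enumerating $T_w^{id}$ for $w \in Z_{k,n}$. If $w = (1, \ldots, k-1, i)$ with $k \le i \le n$, then every $I \le w$ has the form $(1, \ldots, k-1, j)$ with $k \le j \le i$; any two such indices differ in at most one entry, so any quadratic Pl\"ucker binomial supported on these variables is trivial. If $w = (1, \ldots, \hat{i}, \ldots, k, k+1)$, then $T_w^{id}$ is contained in ${\bf I}_{k, k+1}$, and again no non-trivial quadratic binomial of $\init_{\wb_\ell}(G_{k,n})$ can have all four indices in $T_w^{id}$. For the converse, when $w \not\in Z_{k,n}$, one can construct an explicit binomial $P_I P_J - P_{I'} P_{J'} \in \init_{\wb_\ell}(G_{k,n})$ with all four indices $\le w$. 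This is essentially the content of \cite[Theorem~5.7]{OllieFatemeh} and can be cited.

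For parts (ii) and (iii), the key observation is a $w_0$-symmetry available for the diagonal matching field. By the Key Lemma (Lemma~\ref{lem:key_lemma}), the map $I \mapsto w_0 I$ restricts to a bijection $T_{w_0}^v \to T_{w_0 v}^{id}$. The diagonal matching field $B_0$ is invariant under the simultaneous involution $j \mapsto n+1-j$ on columns and $i \mapsto k+1-i$ on rows of $(x_{i,j})$: the leading diagonal term $x_{1,j_1} \cdots x_{k,j_k}$ of $\varphi_n(P_J)$ maps to the leading diagonal term of $\varphi_n(P_{w_0 J})$. Consequently, the ring automorphism $P_I \mapsto P_{w_0 I}$ of $\KK[P_I]_{I \in {\bf I}_{k,n}}$ preserves $\init_{\wb_0}(G_{k,n})$ and induces an isomorphism $G_{k,n,0}|^v \cong G_{k,n,0}|_{w_0 v}$. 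Part (ii) follows from part (i) applied to $w_0 v$.

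Part (iii) splits according to $\ell$. When $\ell = 0$, the same isomorphism $G_{k,n,0}|^v \cong G_{k,n,0}|_{w_0 v}$ reduces the statement to the Schubert side, where $G_{k,n,0}|_{w}$ is monomial-free for every $w$ by \cite[Theorem~5.7]{OllieFatemeh}. When $\ell > n-k+1$, every $J \in {\bf I}_{k,n}$ satisfies $|J \cap \{1, \ldots, \ell\}| \ge k - (n-\ell) \ge 2$, so by Remark~\ref{def:block} we have $B_\ell(J) = \id$ for all $J$. Hence $\phi_\ell = \phi_0$, and Theorem~\ref{thm:JAlebra} gives $\init_{\wb_\ell}(G_{k,n}) = \init_{\wb_0}(G_{k,n})$, reducing this case to $\ell = 0$.

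The main obstacle is verifying the $w_0$-symmetry cleanly enough to conclude that $\init_{\wb_0}(G_{k,n})$ is mapped to itself under $P_I \mapsto P_{w_0 I}$. While the diagonal initial term of each individual Pl\"ucker form transforms correctly, the generating binomials of $\init_{\wb_0}(G_{k,n})$ must also be exchanged in a matching fashion; this uses the compatibility of the natural $S_n$-action on columns of $(x_{i,j})$ with the initial-term selection rule of the diagonal matching field, and must be checked carefully to ensure the induced identification of the two restricted ideals is genuine.
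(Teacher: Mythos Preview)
Your approach is essentially the same as the paper's: part (i) is deferred to \cite[Theorem~5.7]{OllieFatemeh}, and parts (ii) and (iii) are reduced to the Schubert side via the $w_0$-symmetry and the Key Lemma, exactly as the paper does. Your explicit reduction of the case $\ell>n-k+1$ to $\ell=0$ via $B_\ell\equiv\id$ and $\ker(\phi_\ell)=\ker(\phi_0)$ fills in a step the paper leaves implicit, and the ``obstacle'' you flag is precisely the one-line observation the paper makes (that $P_{w_0I}P_{w_0J}-P_{w_0I'}P_{w_0J'}\in\init_{\wb_0}(G_{k,n})$ whenever $P_IP_J-P_{I'}P_{J'}$ is), which follows immediately from $\init_{\wb_0}(G_{k,n})=\ker(\phi_0)$ and the evident equivariance of $\phi_0$ under the column involution $j\mapsto n+1-j$ paired with the row reversal $i\mapsto k+1-i$.
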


\begin{proof}
{\bf (i)} This statement follows directly from \cite[Theorem 5.7]{OllieFatemeh}.

{\bf (ii)} For the first statement, note that $G_{k,n,0}|^v$ is non-zero if and only if there exists a binomial $P_I P_J - P_{I'} P_{J'}$ in $\init_{{\bf w}_0}(G_{k,n})$ such that $I, J \ge v$. By Lemma~\ref{lem:key_lemma}, $I, J \ge v $ if and only if $w_0I, w_0J \le w_0v$. Observe that $P_{w_0I} P_{w_0J} - P_{w_0I'} P_{w_0J'}$ is also a binomial in $\init_{{\bf w}_0}(G_{k,n})$ and all binomials can be written in this form since $w_0^2 = id$. Therefore, $G_{k,n,0}|^v$ is non-zero if and only if $G_{k,n,0}|_{w_0v}$ is non-zero.

{\bf (iii)} This statement is a consequence of part (i) and the bijection between binomials described above. If $G_{k,n,\ell}|^v$ contains a monomial $P_I P_J$ then there exists a binomial $P_I P_J - P_{I'} P_{J'}$ in $\init_{{\bf w}_0}(G_{k,n})$ such that $I, J \ge v$ and $I', J' < v$. By Lemma~\ref{lem:key_lemma}, $w_0I, w_0J \le w_0v$ and $w_0I', w_0J' > w_0v$. Since $P_{w_0I} P_{w_0J} - P_{w_0I'} P_{w_0J'}$ is also a binomial in $\init_{{\bf w}_0}(G_{k,n})$, therefore, $P_{w_0I} P_{w_0J}$ is a monomial in $G_{k,n,\ell}|_{w_0v}$, which contradicts part (i).
\end{proof}

\vspace{-5mm}
\section{Standard monomials for Richardson varieties}\label{sec:Standard_Gr}\label{sec:standard_monomial}

In this section, we 
provide a bijection between the semi-standard Young tableaux with two columns and the set of standard monomials for $\KK[P_I]_{I\in T_w^v}/\ker(\phi_\ell|_w^v)$ of degree two. 
\begin{definition}\label{def:Gamma_ell_deg2}
Let $T$ be a semi-standard Young tableau with two columns and $k$ rows whose entries lie in $[n]$, see \S\ref{rem:monomial_basis}. For each $\ell \in \{1, \dots, n-1\}$ we define the map $\Gamma_\ell : T \mapsto T'$ where $T'$ is a tableau whose columns are ordered according to the matching field $B_\ell$. Suppose that the entries of the columns of $T$ are $I = \{i_1 < i_2 < \dots < i_k \}$ and $J = \{j_1 < j_2 < \dots < j_k\}$. Since $T$ is in a semi-standard form, we assume that $i_s \le j_s$ for each $s \in [k]$. We define $T'$ as the tableau whose columns are $I'$ and $J'$ as sets and are ordered by the matching field $B_\ell$. The sets $I'$ and $J'$ are defined as follows.
\vspace{-2mm}
\begin{itemize}
    \item If $i_1, i_2, j_1 \in \{1, \dots, \ell \}$, $j_2 \in \{\ell+1, \dots, n \}$ and $i_1 < j_1 < i_2$ then we define $I' = \{j_1 < i_2 < i_3 < \dots < i_k \}$ and $J' = \{i_1 < j_2 < j_3 < \dots < j_k\}$.
    \item If $i_1 \in \{1, \dots, \ell \}$, $i_2, j_1, j_2 \in \{\ell+1, \dots, n \}$ and $j_1 < i_2 < j_2$ then we define $I' = \{j_1 < i_2 < i_3 < \dots < i_k \}$ and $J' = \{i_1 < j_2 < j_3 < \dots < j_k\}$
    \item Otherwise we define $I' = I$ and $J' = J$.
\end{itemize}
\end{definition}

\begin{lemma}\label{lem:SSYT_Gamma_injective}
Let $T_1$ and $T_2$ be semi-standard Young tableaux. If $\Gamma_\ell(T_1)$ and $\Gamma_\ell(T_2)$ are row-wise equal then $T_1$ and $T_2$ are equal.
\end{lemma}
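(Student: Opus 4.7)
The plan is to reconstruct $T$ explicitly from the row multisets of $\Gamma_\ell(T)$. Write $T$ with columns $I = (i_1 < \cdots < i_k)$ and $J = (j_1 < \cdots < j_k)$. Since $\Gamma_\ell$ modifies at most the entries in rows $1$ and $2$, for each $r \ge 3$ the $r$-th row of $\Gamma_\ell(T)$ is the multiset $\{i_r, j_r\}$, and these two entries are recovered in order by the weakly-increasing row condition of the SSYT. The substantive step is therefore to recover $(i_1, i_2, j_1, j_2)$ from the first two row multisets $R_1, R_2$.

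Set $S = R_1 \sqcup R_2$. The SSYT inequalities $i_1 \le \min(i_2, j_1) \le \max(i_2, j_1) \le j_2$ immediately give $i_1 = \min S$ and $j_2 = \max S$, so these two values are pinned down together with the rows they occupy in $\Gamma_\ell(T)$. I would then split into four configurations according to the positions of $i_1$ and $j_2$ in $\Gamma_\ell(T)$, and compare with the three cases of Definition~\ref{def:Gamma_ell_deg2} together with the four sub-cases of the ``otherwise'' branch parameterised by the values of $|I \cap \{1,\ldots,\ell\}|$ and $|J \cap \{1,\ldots,\ell\}|$. Each position of $(i_1, j_2)$ is produced by a restricted list of sub-cases of $\Gamma_\ell$, and in each such sub-case the two remaining entries $\{x, y\} \subseteq S$ are assigned to $\{i_2, j_1\}$ by a canonical rule: in the configurations where $i_1, j_2$ lie in the same row, only a single single-column-swap sub-case is compatible, and the hypothesis that neither Case $1$ nor Case $2$ applies forces $i_2 \le j_1$, so $i_2$ and $j_1$ are respectively the min and max of the other row; in the configuration $i_1 \in R_1, j_2 \in R_2$, only the no-swap sub-case is consistent and the recovery is immediate from the rows themselves.

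The main obstacle is the last configuration, $i_1 \in R_2$ and $j_2 \in R_1$, which is compatible with Case $1$, Case $2$, and the double-swap ``otherwise'' sub-case simultaneously. Although these three branches produce outputs whose row multisets take different labelled forms, namely $\{j_1, j_2\}$, $\{i_1, i_2\}$ versus $\{i_2, j_2\}$, $\{i_1, j_1\}$, I would show that writing $p$ for the unique entry of $R_1 \setminus \{j_2\}$ and $q$ for the unique entry of $R_2 \setminus \{i_1\}$, the assignment $j_1 = \min(p, q)$ and $i_2 = \max(p, q)$ is the correct recovery in every branch. This follows from the inequality $j_1 < i_2$, which holds from the matching-field inequalities in Cases $1$ and $2$ and from $j_1 \le \ell < i_2$ in the double-swap sub-case. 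Combining the four configurations yields a unique recovery of $(i_1, i_2, j_1, j_2)$, hence of $T$, proving injectivity.
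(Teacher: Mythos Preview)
Your approach is essentially correct and genuinely different from the paper's. The paper argues by contradiction: it observes that two distinct SSYTs on the same multiset in rows $1$--$2$ must (up to relabelling) have the form $T_1$ with columns $(i_1,i_2),(j_1,j_2)$ and $T_2$ with columns $(i_1,j_1),(i_2,j_2)$ where $j_1<i_2$, then splits on $s=|\{i_1,i_2,j_1,j_2\}\cap[\ell]|\in\{0,1,2,3,4\}$ and in each case computes $\Gamma_\ell(T_1),\Gamma_\ell(T_2)$ directly to see that their second rows differ. You instead build an explicit left inverse at the level of row multisets, organising the cases by the \emph{output} (the positions of $\min S$ and $\max S$ in $R_1,R_2$) rather than by the input parameter $s$. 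Your route is more constructive and yields an actual reconstruction rule; the paper's route is shorter once one accepts the two-SSYT reduction, since each value of $s$ pins down both $\Gamma_\ell(T_1)$ and $\Gamma_\ell(T_2)$ at once.

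One small gap: your claim that ``neither Case~1 nor Case~2 applies forces $i_2\le j_1$'' in the single-column-swap configurations is not quite right on the boundary. In sub-case~(b) one has $i_1,i_2,j_1\le\ell<j_2$, and the failure of the Case~1 condition $i_1<j_1<i_2$ gives only $i_1=j_1$ \emph{or} $i_2\le j_1$; symmetrically, in sub-case~(c) the failure of Case~2 gives $i_2=j_2$ or $i_2\le j_1$. When $i_1=j_1$ (resp.\ $i_2=j_2$) your min/max rule on the residual row assigns the pair the wrong way round. This does not damage injectivity, since in those degenerate cases there is only one SSYT on the multiset $S$ (the alternative tableau would have a repeated entry in a column), so the lemma holds vacuously there. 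You should dispose of the cases $i_1=j_1$ and $i_2=j_2$ at the outset before running your reconstruction.
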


\begin{proof}
We begin by noting that all rows except possibly the first two rows of a tableau are fixed by $\Gamma_\ell$. So it remains to show that if the first two rows of $\Gamma_\ell(T_1)$ and $\Gamma_\ell(T_2)$ are row-wise equal then so are the first two rows of $T_1$ and $T_2$. We also note that $\Gamma_\ell$ preserves the entries of a tableau, thought of as a multi-set. Let us assume by contradiction that $T_1$ and $T_2$ are not row-wise equal. By the above facts we may assume without loss of generality that
\[
T_1 = 
\begin{tabular}{|c|c|}
\hline
    $i_1$ & $j_1$  \\
\hline
    $i_2$ & $j_2$ \\
\hline
    \vdots & \vdots \\
\hline
\end{tabular} \, ,
\quad
T_2 = 
\begin{tabular}{|c|c|}
\hline
    $i_1$ & $i_2$  \\
\hline
    $j_1$ & $j_2$ \\
\hline
    \vdots & \vdots \\
\hline
\end{tabular}
\]
and $j_1 < i_2$. We proceed by taking cases on $s = |\{i_1, i_2, j_1, j_2 \} \cap \{1, \dots, \ell \}|$.

\textbf{Case 1.} Assume $s = 0$ or $4$. It follows that $\Gamma_\ell$ fixes $T_1$ and $T_2$. By row-wise equality of the second row of $\Gamma_\ell(T_1)$ and $\Gamma_\ell(T_2)$ we have that $j_1 = i_2$, a contradiction.

\textbf{Case 2.} Assume $s = 1$. It follows that $i_1 \in \{1, \dots, \ell \}$ and $i_2, j_1, j_2 \in \{\ell+1, \dots, n \}$. Since $j_1 < i_2$ we have
\[
\Gamma_\ell(T_1) = 
\begin{tabular}{|c|c|}
\hline
    $j_1$ & $j_2$  \\
\hline
    $i_2$ & $i_1$ \\
\hline
    \vdots & \vdots \\
\hline
\end{tabular} \, ,
\quad
\Gamma_\ell(T_2) = 
\begin{tabular}{|c|c|}
\hline
    $j_1$ & $i_2$  \\
\hline
    $i_1$ & $j_2$ \\
\hline
    \vdots & \vdots \\
\hline
\end{tabular}\, .
\]
By row-wise equality of the second row, we have that $j_2 = i_2$. However, in the tableau $T_2$, we have that $i_2 < j_2$, a contradiction.

\textbf{Case 3.} Assume $s = 2$. Since $j_1 < i_2$, it follows that $i_1, j_1 \in \{1, \dots, \ell \}$ and $i_2, j_2 \in \{\ell+1, \dots, n \}$. And so we have
\[
\Gamma_\ell(T_1) = 
\begin{tabular}{|c|c|}
\hline
    $i_2$ & $j_2$  \\
\hline
    $i_1$ & $j_1$ \\
\hline
    \vdots & \vdots \\
\hline
\end{tabular} \, ,
\quad
\Gamma_\ell(T_2) = 
\begin{tabular}{|c|c|}
\hline
    $i_1$ & $i_2$  \\
\hline
    $j_1$ & $j_2$ \\
\hline
    \vdots & \vdots \\
\hline
\end{tabular}\, .
\]
By row-wise equality of the second row the tableau we have that $i_1 = j_2$, a contradiction since $i_1 \in \{1, \dots, \ell \}$ and $j_2 \in \{\ell+1, \dots, n \}$.

\textbf{Case 4.} Assume that $s = 3$. It follows that $i_1, i_2, j_1 \in \{ 1, \dots, \ell\}$ and $j_2 \in \{\ell+1, \dots, n \}$. And so we have
\[
\Gamma_\ell(T_1) = 
\begin{tabular}{|c|c|}
\hline
    $j_1$ & $j_2$  \\
\hline
    $i_2$ & $i_1$ \\
\hline
    \vdots & \vdots \\
\hline
\end{tabular} \, ,
\quad
\Gamma_\ell(T_2) = 
\begin{tabular}{|c|c|}
\hline
    $i_1$ & $j_2$  \\
\hline
    $j_1$ & $i_2$ \\
\hline
    \vdots & \vdots \\
\hline
\end{tabular}\, .
\]
By row-wise equality of the second row, we have that $i_1 = j_1$. However, in $T_2$, we have that $i_1 < j_1$, a contradiction.
\end{proof}

\begin{lemma}\label{lem:SSYT_Gamma_surj_any_tableau}
Let $T$ be any tableau whose columns are valid for the block diagonal matching field $B_\ell$. Then there exists a semi-standard Young tableau $T'$ such that $\Gamma_\ell(T')$ and $T$ are row-wise equal.
\end{lemma}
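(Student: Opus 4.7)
The approach will be to construct an explicit SSYT $T'$ from $T$ and then verify both that $T'$ satisfies the SSYT conditions and that $\Gamma_\ell(T')$ is row-wise equal to $T$. Since $\Gamma_\ell$ leaves rows $3$ through $k$ fixed (it only modifies the top two rows), I will first set, for each $s \geq 3$, the two entries of row $s$ of $T'$ to be the sorted versions of the entries of row $s$ of $T$, namely $T'[s,1] = \min(T[s,1], T[s,2])$ and $T'[s,2] = \max(T[s,1], T[s,2])$. Using the fact that the columns of $T$ are strictly increasing from row $3$ downwards (by the matching-field validity of $T$), a short check shows that these choices produce weakly increasing rows and that the column entries of $T'$ in rows $3, \dots, k$ are also strictly increasing.

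For rows $1$ and $2$, let $I^* = \{a_1 < \dots < a_k\}$ and $J^* = \{b_1 < \dots < b_k\}$ denote the column sets of $T$ in sorted order. The key structural observation is that cases (a) and (b) of $\Gamma_\ell$ (Definition~\ref{def:Gamma_ell_deg2}) differ from the ``otherwise'' case only by interchanging the smallest elements of the two columns of the input SSYT. Consequently, there are only two candidate column sets for $T'$: either $(I^*, J^*)$ itself, or the pair $(I^* \setminus \{a_1\} \cup \{b_1\},\ J^* \setminus \{b_1\} \cup \{a_1\})$ obtained by swapping the minima of the two columns. I will decide between these two candidates by a case analysis based on the quantity $s := \lvert \{a_1, a_2, b_1, b_2\} \cap \{1, \dots, \ell\}\rvert$ together with the matching-field swap-statuses of the two columns of $T$, paralleling the analysis in the proof of Lemma~\ref{lem:SSYT_Gamma_injective}. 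In each case I will verify three things: (i) the chosen candidate produces strictly increasing columns, including the crucial boundary condition $T'[2,c] < T'[3,c]$; (ii) rows $1$ and $2$ of $T'$ are weakly increasing; and (iii) applying $\Gamma_\ell$ to $T'$ reproduces $T$ row-wise, either trivially via the ``otherwise'' branch or via the explicit swap in the case~(a)/(b) branches.

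The hardest step will be the case analysis itself, particularly when exactly one of the two columns of $T$ exhibits a matching-field swap and when the four entries $a_1, a_2, b_1, b_2$ straddle the threshold $\ell$ in a nontrivial way. In such situations, deciding whether $T'$ lies in the ``otherwise'' branch or in the case~(a)/(b) branch of $\Gamma_\ell$ requires tracking both the relative sizes of $a_1, a_2, b_1, b_2$ with respect to $\ell$ and the inequalities $a_s \leq b_s$ needed for the SSYT property. The injectivity established in Lemma~\ref{lem:SSYT_Gamma_injective} guarantees that at most one of the two candidates can succeed, so the main content of the proof is to show that in each of the branches of the case analysis at least one of the two candidates does succeed, thus producing the required SSYT $T'$.
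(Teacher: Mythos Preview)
Your proposal is correct and follows essentially the same approach as the paper: sort rows $3,\dots,k$ (the paper phrases this as ``without loss of generality $i_s\le j_s$ for $s\ge 3$''), then perform a case analysis on $s=\lvert\{i_1,i_2,j_1,j_2\}\cap\{1,\dots,\ell\}\rvert$ to determine the top two rows of $T'$, exhibiting $T'$ explicitly in each case. Your framing in terms of the two candidate column-set pairs (keep the sorted columns, or swap their minima) is a clean way to organise the case split and matches exactly what the paper does case by case, though the paper never states this dichotomy explicitly; you are also more explicit than the paper about the need to verify the boundary inequality $T'[2,c]<T'[3,c]$, which the paper leaves implicit.
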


\begin{proof}
Let $T$ be the tableau with entries $\{i_1, i_2 < i_3 < \dots < i_k \}$ and $\{j_1, j_2 < j_3 < \dots < j_k \}$,
\[
T = 
\begin{tabular}{|c|c|}
\hline
    $i_1$ & $j_1$  \\
\hline
    $i_2$ & $j_2$ \\
\hline
    \vdots & \vdots \\
\hline
    $i_k$ & $j_k$ \\
\hline
\end{tabular}\, .
\]
Without loss of generality we may assume that $i_s \le j_s$ for all $s \ge 3$. We proceed by taking cases on $s = |\{i_1, i_2, j_1, j_2 \} \cap \{ 1, \dots, \ell\}|$. 

\textbf{Case 1.} Assume $s = 0$ or $4$. We have that $i_1 < i_2$ and $j_1 < j_2$. So we may order the entries in row to obtain $T'$. Note that in this case $\Gamma_\ell$ fixes $T'$.

\textbf{Case 2.} Assume $s = 1$. Without loss of generality we assume $j_2 \in \{1, \dots, \ell \}$.
\begin{itemize}
    \item If $j_1 > i_2$ then
    \[
    \Gamma_\ell \left( \,
    \begin{tabular}{|c|c|}
    \hline
        $j_2$ & $i_1$  \\
    \hline
        $i_2$ & $j_1$ \\
    \hline
        \vdots & \vdots \\
    \hline
    \end{tabular}\,
    \right)
    =
    \begin{tabular}{|c|c|}
    \hline
        $i_1$ & $j_1$  \\
    \hline
        $i_2$ & $j_2$ \\
    \hline
        \vdots & \vdots \\
    \hline
    \end{tabular}\, .
    \]
    \item If $j_1 \le i_2$ then
    \[
    \Gamma_\ell \left( \,
    \begin{tabular}{|c|c|}
    \hline
        $j_2$ & $i_1$  \\
    \hline
        $j_1$ & $i_2$ \\
    \hline
        \vdots & \vdots \\
    \hline
    \end{tabular}\,
    \right)
    =
    \begin{tabular}{|c|c|}
    \hline
        $j_1$ & $i_1$  \\
    \hline
        $j_2$ & $i_2$ \\
    \hline
        \vdots & \vdots \\
    \hline
    \end{tabular}\, .
    \]
    The tableau on the right is row-wise equal to $T$.
\end{itemize}

\textbf{Case 3.} Assume $s = 2$. 
\begin{itemize}
    \item If $i_1, i_2 \in \{1, \dots, \ell \}$ then $\Gamma_\ell$ fixes each column of $T$, which is a semi-standard Young tableau.
    \item If $i_2, j_2 \in \{1, \dots, \ell \}$ then without loss of generality assume $i_2 \le j_2$ and $i_1 \le j_1$. We have
    \[
    \Gamma_\ell \left( \,
    \begin{tabular}{|c|c|}
    \hline
        $i_2$ & $j_2$  \\
    \hline
        $i_1$ & $j_1$ \\
    \hline
        \vdots & \vdots \\
    \hline
    \end{tabular}\,
    \right)
    =
    \begin{tabular}{|c|c|}
    \hline
        $i_1$ & $j_1$  \\
    \hline
        $i_2$ & $j_2$ \\
    \hline
        \vdots & \vdots \\
    \hline
    \end{tabular}\, .
    \]
\end{itemize}

\textbf{Case 4.} Assume $s = 3$. Without loss of generality we may assume $j_1 \in \{\ell+1, \dots, n \}$.
\begin{itemize}
    \item If $j_2 < i_1$ then
    \[
    \Gamma_\ell \left( \,
    \begin{tabular}{|c|c|}
    \hline
        $j_2$ & $i_1$  \\
    \hline
        $i_2$ & $j_1$ \\
    \hline
        \vdots & \vdots \\
    \hline
    \end{tabular}\,
    \right)
    =
    \begin{tabular}{|c|c|}
    \hline
        $i_1$ & $j_1$  \\
    \hline
        $i_2$ & $j_2$ \\
    \hline
        \vdots & \vdots \\
    \hline
    \end{tabular}\, .
    \]
    Note that in this case we have $j_2 < i_1 < i_2$ and so the tableau on the left is a semi-standard Young tableau.
    \item If $j_2 \ge i_1$ then 
    \[
    \Gamma_\ell \left( \,
    \begin{tabular}{|c|c|}
    \hline
        $i_1$ & $j_2$  \\
    \hline
        $i_2$ & $j_1$ \\
    \hline
        \vdots & \vdots \\
    \hline
    \end{tabular}\,
    \right)
    =
    \begin{tabular}{|c|c|}
    \hline
        $i_1$ & $j_1$  \\
    \hline
        $i_2$ & $j_2$ \\
    \hline
        \vdots & \vdots \\
    \hline
    \end{tabular}\, .
    \]
\end{itemize}
This completes the proof. 
\end{proof}

\begin{lemma}\label{lem:basis_rich_corresp}
Let $v,w\in{\bf I}_{k,n}$ with $v \le w$. A semi-standard Young tableau $T$ vanishes in $G_{k,n,0}|_w^v$ if and only if $\Gamma_\ell(T)$ vanishes in $G_{k,n,\ell}|_w^v$.
\end{lemma}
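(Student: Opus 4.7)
The argument reduces entirely to a combinatorial statement about Bruhat order. By Notation~\ref{notation:restricted}, the monomial $P_I P_J$ corresponding to a two-column tableau with columns $I$ and $J$ vanishes in $G_{k,n,0}|_w^v$ if and only if at least one of $I,J$ fails to lie in $T_w^v$, and the same statement with $\Gamma_\ell(T)$, whose columns are $I'$ and $J'$, holds for $G_{k,n,\ell}|_w^v$. Thus I would first rephrase the lemma as the equivalence of the two conditions $v \le I,J \le w$ and $v \le I',J' \le w$.

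Next, I would run through the cases of Definition~\ref{def:Gamma_ell_deg2}. The ``Otherwise'' branch fixes $T$ and is immediate, so only the two non-trivial branches require work. In both of them a short inspection shows that $i_1 < j_1$: in Case 1 this is part of the hypothesis $i_1 < j_1 < i_2$, while in Case 2 it follows from $i_1 \in B_{\ell,1}$ and $j_1 \in B_{\ell,2}$. Moreover, sorting the sets produced by $\Gamma_\ell$ in increasing order yields
\[
I' = \{ j_1 < i_2 < i_3 < \cdots < i_k \}, \qquad J' = \{ i_1 < j_2 < j_3 < \cdots < j_k \},
\]
where to sort $I'$ one uses $j_1 < i_2$ (explicit in both cases) and to sort $J'$ one uses $i_1 < j_2$ (in Case 1 because $i_1 < i_2 \le \ell < j_2$, in Case 2 because $i_1 \in B_{\ell,1}$ and $j_2 \in B_{\ell,2}$). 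This computation also shows that $I'$ and $J'$ are genuine $k$-subsets, i.e.\ have no repeated entries.

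Finally, I would conclude using the coordinate-wise description of Bruhat order on ${\bf I}_{k,n}$. Writing $v = \{v_1 < \cdots < v_k\}$ and $w = \{w_1 < \cdots < w_k\}$, the condition $v \le I,\, J \le w$ is equivalent to
\[
v_s \le \min(I_s, J_s) \quad \text{and} \quad \max(I_s, J_s) \le w_s \qquad (s \in [k]),
\]
which depends only on the unordered pairs $\{I_s, J_s\}$. The formulas above give $\{I'_s, J'_s\} = \{I_s, J_s\}$ for every $s$: for $s \ge 2$ the entries are unchanged, and for $s = 1$ the pair $\{i_1, j_1\}$ is simply swapped. Hence the two Bruhat conditions are equivalent, which concludes the proof.

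No step looks seriously difficult; the only place where care is needed is the case analysis in the second paragraph, in particular checking that the non-trivial branches of $\Gamma_\ell$ always produce admissible sorted $k$-subsets and that both indeed act by the single transposition $i_1 \leftrightarrow j_1$ on the first row. Once this is verified, the coordinate-wise Bruhat criterion finishes everything automatically.
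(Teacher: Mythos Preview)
Your proposal is correct and follows essentially the same approach as the paper: the key observation is that in each non-trivial branch of $\Gamma_\ell$ the sorted columns $I',J'$ differ from $I,J$ only by swapping $i_1$ and $j_1$, so the unordered row-pairs $\{I_s,J_s\}$ are unchanged and the coordinate-wise Bruhat conditions are preserved. The paper records this tersely as ``$\{\min(I),\min(J)\}=\{\min(I'),\min(J')\}$ and similarly for the second smallest elements''; your write-up makes the sorting checks and the reduction to row-pairs more explicit, which is a welcome clarification.
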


\begin{proof}
Let $I, J$ be the columns of $T$ and $I', J'$ be the columns of $\Gamma_\ell(T)$. The result follows from the fact that $\{\min(I), \min(J)\} = \{ \min(I'), \min(J')\}$ and similarly for the second smallest of elements of $I, J, I'$ and $J'$.
\end{proof}

By the results of Kreiman and Lakshmibai in \cite{kreiman2002richardson}, the semi-standard Young tableaux whose columns $I$ satisfy $v \le I \le w$ are a monomial basis for the Richardson variety $X_w^v$.

\begin{proposition}\label{lem:SSYT_Gamma_surj_restrict}
If $G_{k,n,\ell}|_w^v$ is monomial-free then the set
\vspace{-2mm}
\[
{\rm Im}(\Gamma_\ell)|_w^v = \{\Gamma_\ell(T) : T \textrm{ a two column semi-standard Young tableau for } X_w^v \}
\]
is a monomial basis for $\KK[P_I]_{I \in T_w^v} / \ker(\phi_\ell|_w^v)$ in degree two. 
\end{proposition}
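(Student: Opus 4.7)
The plan is to identify the claimed basis with one representative per equivalence class of degree-two monomials under the monomial map $\phi_\ell$, and then to biject these classes with two-column semi-standard Young tableaux for $X_w^v$ via $\Gamma_\ell$.

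First I record the algebraic set-up. Since $\phi_\ell|_w^v$ is a monomial map, its kernel is a toric ideal, and two degree-two monomials $P_I P_J, P_{I'} P_{J'}\in\KK[P_I]_{I\in T_w^v}$ are congruent modulo $\ker(\phi_\ell|_w^v)$ if and only if $\phi_\ell(P_I P_J)=\phi_\ell(P_{I'}P_{J'})$. The inclusion $\ker(\phi_\ell|_w^v)\subseteq G_{k,n,\ell}|_w^v$ from \eqref{eq:inclusion}, together with the monomial-free hypothesis on $G_{k,n,\ell}|_w^v$, passes to $\ker(\phi_\ell|_w^v)$. Hence a monomial basis of $\KK[P_I]_{I\in T_w^v}/\ker(\phi_\ell|_w^v)$ in degree two is obtained by selecting exactly one representative from each such $\phi_\ell$-equivalence class of degree-two monomials supported on $T_w^v$.

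The heart of the argument is to show that the assignment $T\mapsto \Gamma_\ell(T)$ is a bijection from two-column SSYTs for $X_w^v$ onto this set of $\phi_\ell$-classes. Well-definedness, namely that $\Gamma_\ell(T)$ has both columns in $T_w^v$ and hence lies in $\KK[P_I]_{I\in T_w^v}$, is precisely Lemma~\ref{lem:basis_rich_corresp}. For injectivity, if $\Gamma_\ell(T_1)$ and $\Gamma_\ell(T_2)$ have the same image under $\phi_\ell$, then as tableaux whose columns are already ordered by the matching field $B_\ell$ they must agree row by row, and Lemma~\ref{lem:SSYT_Gamma_injective} forces $T_1=T_2$. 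For surjectivity, given any monomial $P_I P_J$ with $I,J\in T_w^v$, reorder each column according to $B_\ell$ to obtain a valid-for-$B_\ell$ tableau $T^*$ satisfying $\phi_\ell(P_I P_J)=\phi_\ell(T^*)$. By Lemma~\ref{lem:SSYT_Gamma_surj_any_tableau} there exists an SSYT $T$ such that $\Gamma_\ell(T)$ is row-wise equal to $T^*$, and Lemma~\ref{lem:basis_rich_corresp} (in its converse direction) guarantees that the columns of $T$ also lie in $T_w^v$, so $T$ is a SSYT for $X_w^v$ and $[\Gamma_\ell(T)]=[P_I P_J]$.

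The main obstacle is that $\Gamma_\ell$ can move entries between the two columns of a tableau, so the SSYT $T$ producing a representative $\Gamma_\ell(T)$ of a given $\phi_\ell$-class need not have the same column sets as the original monomial. One must therefore verify carefully that the Richardson conditions $v\le I,J\le w$ are preserved in both directions under $\Gamma_\ell$; this compatibility is precisely the content of Lemma~\ref{lem:basis_rich_corresp}, whose proof rests on the observation that $\Gamma_\ell$ preserves the multiset of smallest entries of the two columns and the multiset of second-smallest entries, which together control the non-vanishing in the restricted ring.
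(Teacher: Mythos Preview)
Your surjectivity step has a genuine gap. You invoke Lemma~\ref{lem:basis_rich_corresp} to conclude that if the columns of the $B_\ell$-tableau $T^*$ (with columns $I,J\in T_w^v$) lie in $T_w^v$, then so do the columns of the semi-standard Young tableau $T$ produced by Lemma~\ref{lem:SSYT_Gamma_surj_any_tableau}. But Lemma~\ref{lem:basis_rich_corresp} only compares the columns of $T$ with those of $\Gamma_\ell(T)$; it says nothing about $T^*$, which is merely \emph{row-wise} equal to $\Gamma_\ell(T)$. Row-wise equal $B_\ell$-tableaux can have entirely different column sets, and the Richardson conditions on one need not transfer to the other. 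Concretely, take $k=2$, $n=5$, $\ell=2$, $v=\{1,2\}$, $w=\{3,5\}$. The monomial $P_{\{3,5\}}P_{\{1,4\}}$ has both factors in $T_w^v$, and its $B_\ell$-tableau $T^*$ has rows $(3,4)$ and $(5,1)$. The unique semi-standard Young tableau $T$ with $\Gamma_\ell(T)$ row-wise equal to $T^*$ has columns $\{1,3\}$ and $\{4,5\}$; here $\Gamma_\ell(T)$ has the \emph{same} columns as $T$ (the ``otherwise'' case of Definition~\ref{def:Gamma_ell_deg2}), and $\{4,5\}\not\le w$. So $T$ is not a tableau for $X_w^v$, and your argument would falsely declare the map surjective. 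Of course this $w$ is precisely one for which $G_{k,n,\ell}|_w^v$ contains a monomial, but nothing in your chain of lemmas detects this: Lemma~\ref{lem:basis_rich_corresp} holds unconditionally and never uses the monomial-free hypothesis.

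The paper's proof confronts exactly this obstruction. It argues the contrapositive: assuming some $T^*$ with columns in $T_w^v$ is not in the span of ${\rm Im}(\Gamma_\ell)|_w^v$, the associated semi-standard tableau $T'$ must have a vanishing column, and a case analysis on $|\{i_1,i_2,j_1,j_2\}\cap\{1,\dots,\ell\}|$ then extracts from the inequalities $v\le I,J\le w$ and the vanishing of $I'$ or $J'$ explicit constraints on $v_1,v_2,w_1,w_2$ that violate the combinatorial criteria of Theorem~\ref{thm:Rich}. This is where the monomial-free hypothesis actually enters, and it cannot be replaced by the purely formal compatibility of Lemma~\ref{lem:basis_rich_corresp}.
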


\begin{proof}
We prove the contrapositive, i.e.~if $\textrm{Im}(\Gamma_\ell)|_w^v$ is not a monomial basis for $\KK[P_I]_{I \in T_w^v} / \ker(\phi_\ell|_w^v)$ then $G_{k,n,\ell}|_w^v$ contains a monomial. Let $T$ be a matching field tableau for $B_\ell$ representing a monomial in $\KK[P_I]_{I \in T_w^v} / G_{k,n,\ell}|_w^v$ which does not lie in the span of $\textrm{Im}(\Gamma_\ell)|_w^v$. Since $\textrm{Im}(\Gamma_\ell)$ is a basis for $G_{k, n, \ell}$, it follows that $T$ is row-wise equal to $\Gamma_\ell(T')$ for some semi-standard Young tableau $T'$ which vanishes in $G_{k,n,\ell}|_w^v$. We write $I, J$ for the columns of $T$ and $I', J'$ for the columns of $\Gamma_\ell(T')$. Since
$T$ and $\Gamma_\ell(T')$ are row-wise equal we may assume that all their entries below the second row are in semi-standard form. So we write
\vspace{-2mm}
\[
T = 
\begin{tabular}{|c|c|}
\multicolumn{1}{c}{$I$} & \multicolumn{1}{c}{$J$} \\
\hline
    $i_1$ & $j_1$  \\
\hline
    $i_2$ & $j_2$ \\
\hline
    \vdots & \vdots \\
\hline
\end{tabular}\, , \quad
\Gamma_\ell(T') = 
\begin{tabular}{|c|c|}
\multicolumn{1}{c}{$I'$} & \multicolumn{1}{c}{$J'$} \\
\hline
    $i_1$ & $j_1$  \\
\hline
    $j_2$ & $i_2$ \\
\hline
    \vdots & \vdots \\
\hline
\end{tabular}\, .
\]
Throughout the proof we write $v =\{v_1 < \dots < v_k \}$ and $w = \{w_1 < \dots < w_k \}$ for the Grassmannian permutations. We now take cases on $s = |\{i_1, i_2, j_1, j_2 \} \cap \{1, \dots, \ell \}|$.

\textbf{Case 1.} Assume $s = 0$ or $4$. It follows that $\Gamma_\ell(T')$ is a semi-standard Young tableau and so $\Gamma_\ell(T')$ does not vanish, a contradiction.

\textbf{Case 2.} Assume $s = 1$. Without loss of generality assume that $j_2 \in \{1, \dots, \ell \}$ and note that in this case we may possibly have that $I'$ and $J'$ are swapped in $\Gamma_\ell(T')$. Since $T$ does not vanish we have $v \le I, J \le w$. So by ordering the entries of $I, J$ in increasing order and comparing them with $v$ and $w$, we have
\vspace{-2mm}
\[
v_1 \le  \{i_1, j_2 \} \le w_1, \quad
v_2 \le \{ i_2, j_1 \} \le w_2.
\]
Since $\Gamma_\ell(T')$ vanishes we must have that either $I'$ or $J'$ vanishes. Let us take cases.

\textbf{Case 2.1.} Assume $I' = \{ i_1, j_2, \dots\}$ vanishes. We have    
\vspace{-2mm}
\[
v_1 \le j_2 \le w_1, \quad i_1 \le w_1 < w_2
\]
and so $I' \le w$. Since $I'$ vanishes, we must have $I' \not\ge v$ and so $i_1 < v_2$. We have the following
\begin{itemize}
    \item $v_1 \in \{1, \dots, \ell \}$ because $v_1 \le j_2$,
    \item $v_2 \in \{\ell+2, \dots, n \}$ because $v_2 > i_1 \in \{\ell+1, \dots, n \}$.
\end{itemize}
By Theorems~\ref{thm:Rich} 
we have that $G_{k,n,\ell}|_w^v$ contains a monomial.

\textbf{Case 2.2.} Assume $J' = \{ j_1, i_2, \dots \}$ vanishes. We have
\vspace{-2mm}
\[
v_1 < v_2 \le j_1, \quad
v_2 \le i_2 \le w_2.
\]
Hence $J' \ge v$. Since $J'$ vanishes we have $J \not\le w$ and so $j_1 > w_1$. We have the following
\begin{itemize}
    \item $w_i \in \{\ell+1, \dots, n \}$ for all $i \ge 2$ because $w_2 \ge i_2 \in \{\ell+1, \dots, n \}$,
    \item $w_2 \neq w_1 + 1$ because $w_1 < j_1 < i_2 \le w_2$,
    \item $w_1 \le n-k$ because $w_1 < j_1 = \min(J') \ge n-k+1$,
    \item $w_1 \ge \ell+1$ because $w_1 \ge i_1 \in \{ \ell+1, \dots, n\}$.
\end{itemize}
And so by Theorem~\ref{thm:Rich}
we have that $G_{k,n,\ell}|_w^v$ contains a monomial.

\textbf{Case 3.} Assume $s = 2$.
If $i_1, i_2 \in \{1, \dots, \ell \}$ then $\Gamma_\ell(T')$ is not a valid tableau with respect to the matching field $B_\ell$. It follows that $i_2, j_2, \in \{1, \dots, \ell\}$. However, it easily follows that $\Gamma_\ell(T')$ does not vanish in $G_{k, n, \ell}|_w^v$, a contradiction.

\textbf{Case 4.} Assume $s = 3$. Without loss of generality assume that $j_1 \in \{\ell+1, \dots, n \}$. Note that in this case we may possibly have that $I'$ and $J'$ are swapped in $\Gamma_\ell(T')$. Since $T$ does not vanish we have $v \le I, J \le w$. So by ordering the entries of $I, J$ in increasing order and comparing them with $v$ and $w$, we have
\vspace{-2mm}
\[
v_1 \le \{i_1, j_2 \} \le w_1, \quad
v_2 \le \{i_2, j_1\} \le w_2.
\]
Since $\Gamma_\ell(T')$ vanishes we must have that either $I'$ or $J'$ vanishes. We proceed by taking cases.

\textbf{Case 4.1.} Assume that $I' = \{i_1, j_2, \dots \}$ vanishes. We have
\vspace{-2mm}
\[
v_1 \le i_1 \le w_1, \quad 
j_2 \le w_1 < w_2
\]
and so $I' \le w$. Since $I'$ vanishes we must have $I' \not\ge v$ and we deduce that $j_2 < v_2$. We have the following
\begin{itemize}
    \item $v_1 \in \{1, \dots, \ell \}$ because $v_1 \le i_1 \in \{1, \dots, \ell \}$,
    \item $v_2 > v_1 + 1$ because $v_1 \le i_1 < j_2 < v_2$,
    \item $v_2 \neq \ell+1$ because $v_2 \le i_2 \in \{ 1, \dots, \ell\}$.
\end{itemize}
By Theorems~\ref{thm:Rich} 
we have that $G_{k,n,\ell}|_w^v$ contains a monomial.

\textbf{Case 4.2.} Assume that $J' = \{j_1, i_2, \dots \}$ vanishes. We have \vspace{-2mm}
\[
v_1 < v_2 \le i_2, \quad 
v_2 \le j_1 \le w_2
\]
and so $J' \ge v$. Since $J'$ vanishes we must have $J' \not\le w$ and we deduce that $i_2 > w_1$. We have: 
\begin{itemize}
    \item $w_i \in \{\ell+1, \dots, n \}$ for all $i \ge 2$ because $w_2 \ge j_1 \in \{\ell+1, \dots, n \}$,
    \item $w_2 \neq w_1 + 1$ because $w_1 < i_2 < j_1 \le w_2$,
    \item $w_1 \le n-k$ because $w_1 < i_2 = \min(J') \ge n-k+1$
    \item $w_1 \neq \ell$ because $w_1 < i_2 \in \{1, \dots, \ell \}$,
    \item $w_1 \ge 2$ because, by column $I'$, we have $i_1 < j_2 \le w_1$.
\end{itemize}
And so by Theorem~\ref{thm:Rich}
we have that $G_{k,n,\ell}|_w^v$ contains a monomial.
\end{proof}

\begin{theorem}
\label{thm:std_monomials}
If $G_{k,n,\ell}|_w^v$ is monomial-free, then the size of ${\rm Im}(\Gamma_\ell)|_w^v$ 
is equal to the number of semi-standard Young tableaux with two columns $I, J$ such that $v \le I, J \le w$. 
\end{theorem}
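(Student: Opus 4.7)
The plan is to show that $\Gamma_\ell$ itself restricts to a bijection between the set $\mathcal{S}$ of two-column semi-standard Young tableaux $T$ with columns $I, J$ satisfying $v \le I, J \le w$ and the image set $\mathrm{Im}(\Gamma_\ell)|_w^v$. By the definition given in Proposition~\ref{lem:SSYT_Gamma_surj_restrict}, the restriction $\Gamma_\ell|_{\mathcal{S}}$ is tautologically surjective onto $\mathrm{Im}(\Gamma_\ell)|_w^v$, so the theorem reduces to two checks: (i) that $\Gamma_\ell|_{\mathcal{S}}$ actually lands inside the restricted polynomial ring $\KK[P_I]_{I \in T_w^v}$, and (ii) that $\Gamma_\ell|_{\mathcal{S}}$ is injective.

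For (i), I would inspect the three cases of Definition~\ref{def:Gamma_ell_deg2} one at a time. Writing $I = \{i_1 < \cdots < i_k\}$ and $J = \{j_1 < \cdots < j_k\}$ with $i_s \le j_s$ (the SSYT condition) and $v_s \le i_s, j_s \le w_s$ for all $s$, the modified columns $I', J'$ are obtained by swapping specified entries of $I$ and $J$. In the Case~1 swap, for instance, the increasing orderings become $(j_1, i_2, i_3, \ldots, i_k)$ and $(i_1, j_2, j_3, \ldots, j_k)$, and comparing entrywise with $v = \{v_1 < \cdots < v_k\}$ and $w = \{w_1 < \cdots < w_k\}$ using $v \le I, J \le w$ yields $v \le I', J' \le w$; Case~2 is analogous, and the trivial case is immediate. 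Hence $\Gamma_\ell(T) \in \KK[P_I]_{I \in T_w^v}$ for every $T \in \mathcal{S}$.

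For (ii), I invoke Lemma~\ref{lem:SSYT_Gamma_injective}: any equality $\Gamma_\ell(T_1) = \Gamma_\ell(T_2)$ as monomials in $\KK[P_I]_{I \in T_w^v}$, after the canonical labelling of the two columns by the SSYT order on $T_1$ and $T_2$, entails that the $\Gamma_\ell$-images are row-wise equal as tableaux, from which the lemma yields $T_1 = T_2$. Combining (i) and (ii) with the tautological surjectivity produces the desired cardinality equality $|\mathrm{Im}(\Gamma_\ell)|_w^v| = |\mathcal{S}|$.

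I expect no substantial obstacle here, since the combinatorial heavy lifting is already carried out in Lemma~\ref{lem:SSYT_Gamma_injective}; the proof is essentially bookkeeping on the explicit swap rule. The monomial-free hypothesis on $G_{k,n,\ell}|_w^v$ is not used in the cardinality argument itself — it serves only as the standing assumption under which Proposition~\ref{lem:SSYT_Gamma_surj_restrict} interprets $\mathrm{Im}(\Gamma_\ell)|_w^v$ as a genuine monomial basis for the degree-two part of $\KK[P_I]_{I \in T_w^v}/\ker(\phi_\ell|_w^v)$, which is the context in which this count is deployed later.
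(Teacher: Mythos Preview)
Your proof is correct and follows essentially the same route as the paper: injectivity via Lemma~\ref{lem:SSYT_Gamma_injective}, well-definedness of the target by a direct check (the paper packages this check as Lemma~\ref{lem:basis_rich_corresp}), and tautological surjectivity onto the image. You are also right that the monomial-free hypothesis is not needed for the bare cardinality count; the paper additionally invokes Proposition~\ref{lem:SSYT_Gamma_surj_restrict} only to record the stronger conclusion that ${\rm Im}(\Gamma_\ell)|_w^v$ is a genuine monomial basis, which is the form in which the count is applied downstream.
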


\begin{proof}
First note that a collection of standard monomials for $\KK[P_I]_{I \in T_w^v} / G_{k,n,0}|_w^v$ in degree two is given by semi-standard Young tableaux with two columns such that each column $I$ satisfies $v \le I \le w$. The map $\Gamma_\ell$ from Definition~\ref{def:Gamma_ell_deg2} takes each semi-standard Young tableau with two columns to a degree two monomial in $\KK[P_I]_{I \in {\bf I}_{k,n}}$. By Lemma~\ref{lem:basis_rich_corresp} we have that the restriction of $\Gamma_\ell$ to the standard monomials for $X_w^v$ gives a well-defined map to monomials in the quotient ring $\KK[P_I]_{I\in T_{w}^v} / \ker(\phi_\ell|_w^v)$. By Lemma~\ref{lem:SSYT_Gamma_injective} and Proposition~\ref{lem:SSYT_Gamma_surj_restrict} we have that the restriction of $\Gamma_\ell$ to the standard monomials of $X_w^v$ is a bijection between standard monomials for $X_w^v$ and a monomial basis for $\KK[P_I]_{I\in T_{w}^v} / \ker(\phi_\ell|_w^v)$.
\end{proof}

\vspace{-6mm}
\section{Toric degenerations of Richardson varieties}\label{sec:main_proofs}

We are now ready to answer Question~\ref{intro:question} and prove our main results, 
given the complete characterisation of monomial-free ideals $G_{k,n,\ell}|_w^v$ in \S\ref{sec:gr} and the description of a monomial basis for $\KK[P_I]_{I \in T_{w}^v} / \ker(\phi_\ell|_w^v)$ in \S\ref{sec:standard_monomial}.
In particular, we will show that when $G_{k,n,\ell}|_w^v$ is monomial-free and $\init_{\bf w_\ell}(I(X_w^v))$ is quadratically generated, then $\init_{\bf w_\ell}(I(X_w^v))$ provides a toric degeneration of the Richardson variety $X_w^v$. We expect that $\init_{\bf w_\ell}(I(X_w^v))$ is always quadratically generated, see Conjecture~\ref{conj:J_2_quad_gen},  Theorem~\ref{thm:intro_ell_0} and \cite{OllieCodeGr}. Hence, assuming that Conjecture~\ref{conj:J_2_quad_gen} holds, all the pairs $(v,w)$ classified in Theorem~\ref{thm:Rich} lead to toric degenerations of $X_w^v$.

\medskip

The first step is to prove our main results is to show that the inclusions in (\ref{eq:inclusion}) hold.

\begin{lemma}\label{lem:J_1=J_3_binomial} 
We have the following:
\begin{itemize}
    \item[{\rm (i)}] The ideals $G_{k,n,\ell}|_w^v$ and $\ker(\phi_\ell|_w^v)$ coincide if and only if $G_{k,n,\ell}|_w^v$ is monomial-free.
    \item[{\rm (ii)}] $G_{k,n,\ell}|_w^v\subseteq  {\rm in}_{{\bf w}_\ell}(I(X_w^v))$.
\end{itemize}
\end{lemma}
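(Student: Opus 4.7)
The plan is to handle the two parts separately, both by returning to the binomial descriptions of the ideals involved and exploiting the elimination description of the restriction operator given by Lemma~\ref{lem:elim_ideal_gen_set}.

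For part~(i), one direction is immediate: if $G_{k,n,\ell}|_w^v = \ker(\phi_\ell|_w^v)$, then the common ideal is the kernel of a monomial map, hence prime, hence monomial-free. For the converse, I fix the quadratic binomial generating set of $\init_{{\bf w}_\ell}(G_{k,n}) = \ker(\phi_\ell)$ provided by Theorem~\ref{thm:JAlebra}, and observe that under the restriction operator $\hat{\cdot}$ each binomial $P_IP_J - P_{I'}P_{J'}$ becomes itself, a single monomial, or zero, depending on how many of its indices lie in $T_w^v$. Since every generator of an ideal belongs to the ideal, the monomial-free hypothesis rules out the single-monomial case, leaving only binomials that lie in $\ker(\phi_\ell|_w^v)$. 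The reverse inclusion $\ker(\phi_\ell|_w^v) \subseteq G_{k,n,\ell}|_w^v$ holds unconditionally: any binomial $P^\alpha - P^\beta \in \ker(\phi_\ell|_w^v)$ is already a binomial of $\ker(\phi_\ell)$ supported on $T_w^v$, so it belongs to $G_{k,n,\ell}|_w^v$ by Lemma~\ref{lem:elim_ideal_gen_set}.

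For part~(ii), I would argue generator-by-generator. Each generator of $G_{k,n,\ell}|_w^v$ has the form $\hat g$ for some $g = \init_{{\bf w}_\ell}(f)$ with $f \in G_{k,n}$. The essential observation is that $\hat f$ itself lies in $I(X_w^v)$: indeed $f - \hat f \in \langle P_J : J \notin T_w^v\rangle$, so $\hat f$ lies in $(G_{k,n} + \langle P_J : J \notin T_w^v\rangle) \cap \KK[P_I]_{I \in T_w^v} = I(X_w^v)$. Consequently $\init_{{\bf w}_\ell}(\hat f) \in \init_{{\bf w}_\ell}(I(X_w^v))$, and it remains to identify $\hat g = \widehat{\init_{{\bf w}_\ell}(f)}$ with $\init_{{\bf w}_\ell}(\hat f)$ whenever the former is nonzero. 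A short weight-comparison does this: a term of minimal weight in $f$ that is supported on $T_w^v$ is in particular a term of $\hat f$, and its weight cannot exceed the minimal weight of $\hat f$; by minimality of its weight in $f$ neither can it fall below, so the term is of minimal weight in $\hat f$. When $\widehat{\init_{{\bf w}_\ell}(f)} = 0$ the containment is trivial.

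The main obstacle I foresee is the subtle fact that the two operators $\hat{\cdot}$ and $\init_{{\bf w}_\ell}$ do not commute in general; the argument for part~(ii) must therefore circumvent any direct exchange and instead rely on the nonvanishing-or-trivial dichotomy described above. Apart from this, both parts reduce to careful unwinding of the quadratic binomial structure of $\ker(\phi_\ell)$ guaranteed by Theorem~\ref{thm:JAlebra}, combined with Lemma~\ref{lem:elim_ideal_gen_set}.
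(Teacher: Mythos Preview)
Your proposal is correct and follows essentially the same approach as the paper: both parts reduce to the quadratic binomial generating set of $\init_{{\bf w}_\ell}(G_{k,n})$ from Theorem~\ref{thm:JAlebra}, and part~(ii) proceeds by lifting each generator $\hat g$ to some $f\in G_{k,n}$ with $\init_{{\bf w}_\ell}(f)=g$ and comparing with $\init_{{\bf w}_\ell}(\hat f)$. Your treatment is in fact slightly more careful than the paper's in two places---you explicitly justify the unconditional inclusion $\ker(\phi_\ell|_w^v)\subseteq G_{k,n,\ell}|_w^v$ (which the paper records in~\eqref{eq:inclusion} but does not reprove here), and you isolate the $\hat g=0$ case and the weight-comparison needed to show $\widehat{\init_{{\bf w}_\ell}(f)}=\init_{{\bf w}_\ell}(\hat f)$ when $\hat g\neq 0$, whereas the paper asserts this identity in one line.
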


\begin{proof}

We first note that by Theorem~\ref{thm:JAlebra} the ideal $\init_{{\bf w}_\ell}(G_{k,n})$ is quadratically generated. We let $G$ be a quadratic binomial generating set for $\init_{{\bf w}_\ell}(G_{k,n})$.

(i) Note that $\phi_n|_w^v$ is a monomial map, hence its kernel does not contain any monomials. So, if the ideal $G_{k,n,\ell}|_w^v$ contains a monomial then it is not equal to $\ker{(\phi_\ell|_w^v)}$.
Now assume that the ideal $G_{k,n,\ell}|_w^v$ does not contain any monomials. By definition, we have $G_{k,n,\ell}|_w^v = \langle G|_{T_w^v}\rangle = \langle G \cup \{P_J : J \in {\bf I}_{k,n} \backslash T_w^v \} \rangle \cap \mathbb K[P_I]_{I \in T_w^v}$. Since the ideal $G_{k,n,\ell}|_w^v$ is monomial-free, the set $G|_{T_w^v}$ does not contain any monomials.
Moreover, since all binomials $m_1 - m_2 \in G|_{T_w^v}$ lie in the ideal $\init_{{\bf w}_\ell}(G_{k,n})$ and contain only the non-vanishing Pl\"ucker variables $P_J$ for $J \in T_w^v$, therefore, $m_1 - m_2 \in \ker{(\phi_\ell|_w^v)}$. Thus $G_{k,n,\ell}|_w^v \subseteq \ker{(\phi_\ell|_w^v)}$ which complete the proof of (i).

(ii) Since $G_{k,n,\ell}|_w^v = \langle G|_{T_w^v}\rangle$, we take $\hat g \in G|_{T_w^v}$. We have that $g \in G \subseteq \init_{{\bf w}_\ell}(G_{k,n})$ and so 
there exists $f \in G_{k,n}$ such that $\init_{{\bf w}_\ell}(f) = g$. 
The terms of $\hat g$ are precisely the non-vanishing terms of the initial terms of $f$. Thus $\hat g = \init_{\bf w}(\hat f) \in {\rm in}_{{\bf w}_\ell}(I(X_w^v))$, as desired.
\end{proof}

We now use the description of a monomial basis for the algebra $\KK[P_I]_{I\in T_{w}^v} / \ker(\phi_\ell|_w^v)$ to show that the containment in Lemma~\ref{lem:J_1=J_3_binomial}(ii) is indeed an equality.

\begin{theorem}\label{thm:main}
If $G_{k,n,\ell}|^v_w$ is monomial-free and $\init_{\bf{w}_\ell}(I(X_w^v))$ is quadratically generated, then the ideals $\init_{\bf{w}_\ell}(I(X_w^v))$, $G_{k,n,\ell}|^v_w$ and $\ker(\phi_\ell|_w^v)$ are all equal. 
\end{theorem}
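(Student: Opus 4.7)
The plan is to leverage the two inclusions already established in Lemma~\ref{lem:J_1=J_3_binomial} and close the remaining gap by a Hilbert-function comparison in degree two, then propagate the equality to all degrees using quadratic generation. Under the monomial-free hypothesis, Lemma~\ref{lem:J_1=J_3_binomial}(i) gives immediately $G_{k,n,\ell}|_w^v = \ker(\phi_\ell|_w^v)$, and Lemma~\ref{lem:J_1=J_3_binomial}(ii) yields
\[
\ker(\phi_\ell|_w^v) \;=\; G_{k,n,\ell}|_w^v \;\subseteq\; \init_{\bf w_\ell}(I(X_w^v)).
\]
Thus the theorem reduces to proving the reverse inclusion $\init_{\bf w_\ell}(I(X_w^v)) \subseteq \ker(\phi_\ell|_w^v)$.

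To establish the reverse inclusion, I would compare dimensions in degree two. On the Richardson side, since $\init_{\bf w_\ell}(I(X_w^v))$ is a Gr\"obner degeneration of $I(X_w^v)$, the quotient rings $\KK[P_I]_{I\in T_w^v}/\init_{\bf w_\ell}(I(X_w^v))$ and $\KK[P_I]_{I\in T_w^v}/I(X_w^v)$ share the same Hilbert function; by Theorem~\ref{thm:SMT_Grassmannian}, the degree-two component of the latter is spanned by standard monomials, namely two-column semi-standard Young tableaux whose columns $I,J$ satisfy $v \le I,J \le w$. On the matching-field side, Proposition~\ref{lem:SSYT_Gamma_surj_restrict} shows that $\mathrm{Im}(\Gamma_\ell)|_w^v$ is a monomial basis for the degree-two part of $\KK[P_I]_{I\in T_w^v}/\ker(\phi_\ell|_w^v)$, and Theorem~\ref{thm:std_monomials} identifies its cardinality with exactly the same count of semi-standard Young tableaux. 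Consequently both quotient rings have identical degree-two Hilbert numbers, which combined with the inclusion $\ker(\phi_\ell|_w^v)\subseteq \init_{\bf w_\ell}(I(X_w^v))$ forces the degree-two components of these two ideals to coincide.

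Finally, I would upgrade the degree-two equality to an equality of ideals. The ideal $\ker(\phi_\ell|_w^v) = G_{k,n,\ell}|_w^v$ is generated by quadratic binomials, since it is obtained by restricting the quadratic generating set of $\init_{\bf w_\ell}(G_{k,n}) = \ker(\phi_\ell)$ provided by Theorem~\ref{thm:JAlebra} to the variables $P_I$ with $I\in T_w^v$, and the monomial-free hypothesis rules out any generator collapsing to a monomial. Since $\init_{\bf w_\ell}(I(X_w^v))$ is quadratically generated by assumption, and both ideals are generated in degree two with matching degree-two parts, they must be equal. This delivers the chain $\init_{\bf w_\ell}(I(X_w^v)) = G_{k,n,\ell}|_w^v = \ker(\phi_\ell|_w^v)$.

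The main obstacle is really packaged into Theorem~\ref{thm:std_monomials}: the existence of the bijection $\Gamma_\ell$ between semi-standard Young tableaux and the monomial basis in degree two for the toric quotient is what makes the Hilbert-function comparison possible. Once this bijection is in hand, together with the Gr\"obner-degeneration preservation of Hilbert functions and the standard observation that two quadratically generated ideals with one containing the other and equal degree-two parts must coincide, the argument is essentially a two-line synthesis of the preceding results.
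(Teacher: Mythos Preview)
Your proposal is correct and follows essentially the same approach as the paper: use Lemma~\ref{lem:J_1=J_3_binomial} for the inclusions, compare degree-two Hilbert numbers via the Gr\"obner-degeneration preservation of Hilbert functions together with Theorem~\ref{thm:SMT_Grassmannian} and Proposition~\ref{lem:SSYT_Gamma_surj_restrict}/Theorem~\ref{thm:std_monomials}, and then invoke quadratic generation of all three ideals to conclude. If anything, your write-up is slightly more explicit than the paper's in justifying why $\ker(\phi_\ell|_w^v)$ is quadratically generated.
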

\begin{proof}
By Lemma~\ref{lem:J_1=J_3_binomial} we have that $G_{k,n,\ell}|_w^v =\ker(\phi_\ell|_w^v)\subseteq {\rm in}_{{\bf w}_\ell}(I(X_w^v))$. In particular, the inclusion $\ker(\phi_\ell|_w^v)\subseteq {\rm in}_{{\bf w}_\ell}(I(X_w^v))$ implies that for any collection of monomials $\mathcal M \subseteq \KK[P_I]_{I \in T_w^v}$, if $\mathcal M$ is linearly independent in $\KK[P_I]_{I\in T_{w}^v} / {\rm in}_{{\bf w}_\ell}(I(X_w^v))$ then $\mathcal M$ is linearly independent in $\KK[P_I]_{I\in T_{w}^v} / \ker(\phi_\ell|_w^v)$. So any standard monomial basis of degree $d$ for $\KK[P_I]_{I\in T_{w}^v} / {\rm in}_{{\bf w}_\ell}(I(X_w^v))$ is linearly independent in $\KK[P_I]_{I\in T_{w}^v}/\ker(\phi_\ell|_w^v)$. Note that Gr\"obner degeneration gives rise to a flat family, and so the Hilbert polynomials of all fiber are identical. So by Theorem~\ref{thm:SMT_Grassmannian}, the dimension of degree $d$ part of $\KK[P_I]_{I\in T_{w}^v} / {\rm in}_{{\bf w}_\ell}(I(X_w^v))$ is equal to the number of standard monomials for $X_w^v$ of degree $d$. By Proposition~\ref{lem:SSYT_Gamma_surj_restrict}
we have that 
$\KK[P_I]_{I\in T_{w}^v} / {\rm in}_{{\bf w}_\ell}(I(X_w^v))$ and $\KK[P_I]_{I\in T_{w}^v} / \ker(\phi_\ell|_w^v)$ have the same number of standard monomials in degree two. Since $\ker(\phi_\ell|_w^v) \subseteq {\rm in}_{{\bf w}_\ell}(I(X_w^v))$ it follows that $\textrm{Im}(\Gamma_\ell)|_w^v$ is a collection of standard monomials for $\KK[P_I]_{I\in T_{w}^v} / {\rm in}_{{\bf w}_\ell}(I(X_w^v))$ and $G_{k,n,\ell}|_w^v$. 
Since $G_{k,n,\ell}|_w^v,{\rm in}_{{\bf w}_\ell}(I(X_w^v))$ and $\ker(\phi_\ell|_w^v)$ are all generated in degree two, it follows that they are equal.
\end{proof}

\begin{theorem}
\label{thm:intro_ell_0}
For $\ell=0$, the ideals $G_{k,n,0}|_w^v$, $\init_{{\bf w}_0}(I(X_w^v))$ and $\ker(\phi_0|_w^v)$ are all equal. In particular, they are all quadratically generated toric ideals.
\end{theorem}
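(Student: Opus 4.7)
The strategy is to reduce to Theorem~\ref{thm:main} and obtain the stronger equality $\init_{{\bf w}_0}(I(X_w^v)) = \ker(\phi_0|_w^v) = G_{k,n,0}|_w^v$ via a Hilbert function comparison, bypassing a direct proof of quadratic generation. Monomial-freeness of $G_{k,n,0}|_w^v$ is immediate from the first bullet of Theorem~\ref{thm:Rich}, which gives $G_{k,n,0}|_w^v = \ker(\phi_0|_w^v)$ by Lemma~\ref{lem:J_1=J_3_binomial}(i), and we have $\ker(\phi_0|_w^v) \subseteq \init_{{\bf w}_0}(I(X_w^v))$ by Lemma~\ref{lem:J_1=J_3_binomial}(ii). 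Equality then follows from matching dimensions in every degree. By Theorem~\ref{thm:SMT_Grassmannian} combined with flatness of the Gr\"obner degeneration, the degree $d$ part of $\KK[P_I]_{I \in T_w^v}/\init_{{\bf w}_0}(I(X_w^v))$ has dimension equal to the number of SSYTs of shape $k \times d$ with columns in $T_w^v$.

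The core technical step is to show the same SSYT count governs $\KK[P_I]_{I \in T_w^v}/\ker(\phi_0|_w^v)$, extending Proposition~\ref{lem:SSYT_Gamma_surj_restrict} from degree two to arbitrary degree when $\ell = 0$. The diagonal monomial map $\phi_0$ sends a product $P_{I_1} \cdots P_{I_d}$ to a monomial encoding the multiset of entries in each row of the associated tableau, so two such products are $\ker(\phi_0|_w^v)$-equivalent precisely when their tableaux share the same row content. The canonical representative of each equivalence class is obtained by sorting each row in weakly increasing order; this produces the unique SSYT with that row content, and for $\ell = 0$ the map $\Gamma_0$ is indeed the identity on SSYTs. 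Since $v \le I_s \le w$ componentwise for all $s$, the componentwise minimum and maximum of the $I_s$ lie in $T_w^v$, and so the sorted SSYT has all columns in $T_w^v$. This gives a bijection between SSYTs with columns in $T_w^v$ and $\ker(\phi_0|_w^v)$-classes in degree $d$, hence the required monomial basis.

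The main obstacle is the classical combinatorial fact that sorting the rows of a tableau with strictly increasing columns yields a tableau whose columns remain strictly increasing, i.e.\ a genuine SSYT. This requires a careful inductive argument on the number of columns, essentially a pigeonhole comparison of row entries across adjacent rows. Once this is established, the Hilbert function equality combined with the inclusion from Lemma~\ref{lem:J_1=J_3_binomial}(ii) forces equality of all three ideals. Quadratic generation of the common ideal is then automatic: Theorem~\ref{thm:JAlebra} provides a quadratic binomial generating set for $\init_{{\bf w}_0}(G_{k,n})$, and Lemma~\ref{lem:elim_ideal_gen_set} shows that the restriction of this generating set to $\KK[P_I]_{I \in T_w^v}$ remains quadratic and generates $G_{k,n,0}|_w^v$.
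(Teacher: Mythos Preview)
Your proposal is correct and follows essentially the same route as the paper: monomial-freeness from Theorem~\ref{thm:Rich}, the chain $\ker(\phi_0|_w^v) = G_{k,n,0}|_w^v \subseteq \init_{{\bf w}_0}(I(X_w^v))$ from Lemma~\ref{lem:J_1=J_3_binomial}, and then a Hilbert function match in every degree using that SSYTs with columns in $T_w^v$ give a monomial basis for $\KK[P_I]_{I\in T_w^v}/\ker(\phi_0|_w^v)$ (two monomials agree modulo $\ker(\phi_0|_w^v)$ iff their tableaux are row-wise equal). You are simply more explicit than the paper about two points it asserts without proof---that row-sorting a tableau with strictly increasing columns yields an SSYT, and that the sorted columns remain in $T_w^v$---and your opening phrase ``reduce to Theorem~\ref{thm:main}'' is a slight misnomer, since you (rightly) bypass its quadratic-generation hypothesis rather than invoke it.
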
\begin{proof}
First note that by Theorem~\ref{thm:Rich} each ideal $G_{k,n,0}|_w^v$ is monomial-free. By Theorem~\ref{thm:SMT_Grassmannian} the standard monomials for the Pl\"ucker algebra of the Richardson variety are in bijection with the rectangular semi-standard Young tableaux with columns $J$ such that $v \le J \le w$. 
To see that these tableaux form a monomial basis for $\KK[P_I]_{I\in T_{w}^v}/\ker(\phi_0|_w^v)$, observe that any two monomials in this algebra are equal if and only if their corresponding tableaux are row-wise equal. Also, for any tableau, there is a unique semi-standard Young tableau which is row-wise equal to it.
It follows that the dimension of the degree $d$ part of $\KK[P_I]_{I\in T_{w}^v}/\ker(\phi_0|_w^v)$ and $\KK[P_I]_{I\in T_{w}^v}/\init_{{\bf w}_0}(I(X_w^v))$ are equal for all $d$. Moreover, by Lemma~\ref{lem:J_1=J_3_binomial} we have that $\ker(\phi_0|_w^v) = G_{k,n,0}|_w^v \subseteq \init_{{\bf w}_0}(I(X_w^v))$. Hence, these ideals are all equal and, in particular, they are quadratically generated. 
\end{proof}

\begin{remark}\label{rem:computation}
In  \cite{OllieCodeGr}, using \texttt{Macaulay2} we have calculated the initial ideals ${\rm in}_{{\bf w}_\ell}(I(X_w^v))$ for all Richardson varieties inside $G_{k,n}$ where $n \leq 7$ and $k\leq n-2$. 
We have observed that they are all quadratically generated. This 
confirms that Conjecture~\ref{conj:J_2_quad_gen} holds for $n\leq 7$.
\end{remark}

We proceed by comparing our results to previous results in the literature. We highlight possible connections to other areas and future research directions.

\begin{remark}
In \cite{bossinger2018following}, the authors study the degenerations of Schubert varieties inside the full flag variety built upon on the flat degeneration given by Feigin \cite{Feigin2012}. They give a number of sufficient conditions on the permutation $w \in S_n$ such that the restriction of the degeneration to the Schubert variety $X_w$ is reducible. Similarly to our methods, this is done by showing that the corresponding initial ideals contain monomials.  
In \cite{CharyOllieFatemeh_WICA}, we use the results of \cite{OllieFatemeh2} to study the degenerations of Richardson varieties inside the flag variety.  
\end{remark}

\begin{remark}
In \cite{clarke2020combinatorial}, the authors showed that the polytopes associated to toric degenerations of the Grassmannian arising from matching fields, are related to each other by sequences of combinatorial mutations in the sense of \cite{akhtar2012minkowski}. We expect that the polytopes of toric degenerations of Richardson varieties provided here to have similar properties.
\end{remark}

\vspace{-3mm}

\bibliographystyle{plain}
\bibliography{JACO-Trop1.bib}

\begin{thebibliography}{10}

\bibitem{akhtar2012minkowski}
M.~Akhtar, T.~Coates, S.~Galkin, and A.~Kasprzyk.
\newblock Minkowski polynomials and mutations.
\newblock {\em Symmetry, Integrability and Geometry: Methods and Applications},
  8:094, 2012.

\bibitem{anderson2013okounkov}
D.~Anderson.
\newblock Okounkov bodies and toric degenerations.
\newblock {\em Mathematische Annalen}, 356(3):1183--1202, 2013.

\bibitem{BFFHL}
L.~Bossinger, X.~Fang, G.~Fourier, M.~Hering, and M.~Lanini.
\newblock Toric degenerations of {G}r$(2,n)$ and {G}r$(3,6)$ via plabic graphs.
\newblock {\em Annals of Combinatorics}, 22(3):491--512, 2018.

\bibitem{bossinger2018following}
L.~Bossinger and M.~Lanini.
\newblock Following {S}chubert varieties under {F}eigin's degeneration of the
  flag variety.
\newblock {\em arXiv preprint arXiv:1802.04320}, 2018.

\bibitem{bossinger2020families}
L.~Bossinger, F.~Mohammadi, and A.~N{\'a}jera Ch{\'a}vez.
\newblock Families of {G}röbner degenerations, {G}rassmannians and universal
  cluster algebras.
\newblock {\em arXiv preprint arXiv:2007.14972}, 2020.

\bibitem{caldero2002toric}
P.~Caldero.
\newblock Toric degenerations of {S}chubert varieties.
\newblock {\em Transformation Groups}, 7(1):51--60, 2002.

\bibitem{CharyOllieFatemeh_WICA}
N.~Chary~Bonala, O.~Clarke, and F.~Mohammadi.
\newblock Standard monomial theory and toric degenerations of {R}ichardson
  varieties in flag varieties.
\newblock In {\em Women in Commutative Algebra - Proceedings of the 2019 WICA
  Workshop, arXiv preprint arXiv:2009.03210}. Springer, 2021.

\bibitem{OllieCodeGr}
O.~Clarke.
\newblock Calculations of toric degenerations of {G}rassmannians, {S}chubert
  varieties and {R}ichardson varieties.
\newblock https://github.com/ollieclarke8787/toric$\_$degenerations$\_$gr/.

\bibitem{clarke2020combinatorial}
O.~Clarke, A.~Higashitani, and F.~Mohammadi.
\newblock Combinatorial mutations and block diagonal polytopes.
\newblock {\em Collectanea Mathematica, arXiv preprint arXiv:2010.04079}, 2021.

\bibitem{OllieFatemeh3}
O.~Clarke and F.~Mohammadi.
\newblock Standard monomial theory and toric degenerations of {S}chubert
  varieties from matching field tableaux.
\newblock {\em Journal of Symbolic Computation}, 104:683--723.

\bibitem{OllieFatemeh}
O.~Clarke and F.~Mohammadi.
\newblock Toric degenerations of {G}rassmannians and {S}chubert varieties from
  matching field tableaux.
\newblock {\em Journal of Algebra}, 559:646--678, 2020.

\bibitem{OllieFatemeh2}
O.~Clarke and F.~Mohammadi.
\newblock Toric degenerations of flag varieties from matching field tableaux.
\newblock {\em Journal of Pure and Applied Algebra}, 225(8):106624, 2021.

\bibitem{Cone}
A.~Dochtermann and F.~Mohammadi.
\newblock Cellular resolutions from mapping cones.
\newblock {\em Journal of Combinatorial Theory, Series A}, 128:180--206, 2014.

\bibitem{eisenbud2013commutative}
D.~Eisenbud.
\newblock {\em Commutative Algebra: with a View Toward Algebraic Geometry}.
\newblock Graduate Texts in Mathematics. Springer New York, 2013.

\bibitem{ene2011monomial}
V.~Ene, J.~Herzog, and F.~Mohammadi.
\newblock Monomial ideals and toric rings of {H}ibi type arising from a finite
  poset.
\newblock {\em European Journal of Combinatorics}, 32(3):404--421, 2011.

\bibitem{fang2017toric}
X.~Fang, G.~Fourier, and P.~Littelmann.
\newblock On toric degenerations of flag varieties.
\newblock {\em Representation Theory--Current Trends and Perspectives}, pages
  187--232, 2017.

\bibitem{Feigin2012}
E.~Feigin.
\newblock {${\mathbb{{G}}_a^{M}}$} degeneration of flag varieties.
\newblock {\em Selecta Mathematica}, 18(3):513--537, 2012.

\bibitem{gonciulea1996degenerations}
N.~Gonciulea and V.~Lakshmibai.
\newblock Degenerations of flag and {S}chubert varieties to toric varieties.
\newblock {\em Transformation Groups}, 1(3):215--248, 1996.

\bibitem{M2}
D.~R. Grayson and M.~E. Stillman.
\newblock Macaulay2, a software system for research in algebraic geometry.
\newblock Available at {http://www.math.uiuc.edu/Macaulay2/}.

\bibitem{hibi1987distributive}
T.~Hibi.
\newblock Distributive lattices, affine semigroup rings and algebras with
  straightening laws.
\newblock In {\em Commutative Algebra and Combinatorics}, pages 93--109.
  Mathematical Society of Japan, 1987.

\bibitem{hodge1943some}
W.~V.~D. Hodge.
\newblock Some enumerative results in the theory of forms.
\newblock In {\em Mathematical Proceedings of the Cambridge Philosophical
  Society}, volume~39, pages 22--30. Cambridge University Press, 1943.

\bibitem{kateri2015family}
M.~Kateri, F.~Mohammadi, and B.~Sturmfels.
\newblock A family of quasisymmetry models.
\newblock {\em Journal of Algebraic Statistics}, 6(1), 2015.

\bibitem{kaveh2019khovanskii}
K.~Kaveh and C.~Manon.
\newblock Khovanskii bases, higher rank valuations, and tropical geometry.
\newblock {\em SIAM Journal on Applied Algebra and Geometry}, 3(2):292--336,
  2019.

\bibitem{kogan}
M.~Kogan and E.~Miller.
\newblock Toric degeneration of {S}chubert varieties and {G}elfand-{T}setlin
  polytopes.
\newblock {\em Advances in Mathematics}, 193(1):1--17, 2005.

\bibitem{kreiman2002richardson}
V.~Kreiman and V.~Lakshmibai.
\newblock Richardson varieties in the {G}rassmannian.
\newblock {\em arXiv preprint math/0203278}, 2002.

\bibitem{lakshmibai2015grassmannian}
V.~Lakshmibai and J.~Brown.
\newblock {\em The Grassmannian variety: geometric and representation-theoretic
  aspects}.
\newblock Developments in Mathematics. Springer New York, 2015.

\bibitem{lakshmibai2003richardson}
V.~Lakshmibai and P.~Littelmann.
\newblock Richardson varieties and equivariant {K}-theory.
\newblock {\em Journal of Algebra}, 260(1):230--260, 2003.

\bibitem{KristinFatemeh}
F.~Mohammadi and K.~Shaw.
\newblock Toric degenerations of {G}rassmannians from matching fields.
\newblock {\em Algebraic Combinatorics}, 2(6):1109--1124, 2019.

\bibitem{Ohsugi}
H.~Ohsugi and T.~Hibi.
\newblock Toric ideals generated by quadratic binomials.
\newblock {\em Journal of Algebra}, 218(2):509--527, 1999.

\bibitem{rietsch2017newton}
K.~Rietsch and L.~Williams.
\newblock Newton--{O}kounkov bodies, cluster duality, and mirror symmetry for
  {G}rassmannians.
\newblock {\em Duke Mathematical Journal}, 168(18):3437--3527, 2019.

\bibitem{serhiyenko2019cluster}
K.~Serhiyenko, M.~Sherman-Bennett, and L.~Williams.
\newblock Cluster structures in {S}chubert varieties in the {G}rassmannian.
\newblock {\em Proceedings of the London Mathematical Society},
  119(6):1694--1744, 2019.

\bibitem{speyer2004tropical}
D.~Speyer and B.~Sturmfels.
\newblock The tropical {G}rassmannian.
\newblock {\em Advance in Geometry}, 4:389--411, 2004.

\bibitem{sturmfels1993maximal}
B.~Sturmfels and A.~Zelevinsky.
\newblock Maximal minors and their leading terms.
\newblock {\em Advances in Mathematics}, 98(1):65--112, 1993.

\end{thebibliography}

\bigskip
\bigskip
\noindent
\footnotesize{\bf Authors' addresses:}

\medskip

\noindent Narasimha Chary Bonala\\ Ruhr-Universit\"at Bochum,
Fakult\"at f\"ur Mathematik, D-44780 Bochum, Germany
\\
\noindent  E-mail address: {\tt  narasimha.bonala@rub.de}
\medskip

\noindent Oliver Clarke\\ University of Bristol, School of Mathematics,
BS8 1TW, Bristol, UK
\\
\noindent  E-mail address: {\tt oliver.clarke@bristol.ac.uk}

\medskip

\noindent Fatemeh Mohammadi \\
Department of Mathematics: Algebra and Geometry, Ghent University, 9000 Ghent, Belgium \\
Department of Mathematics and Statistics, 
UiT – The Arctic University of Norway, 9037 Troms\o, Norway
\\ E-mail address: {\tt fatemeh.mohammadi@ugent.be}

\end{document}